\documentclass[11pt]{article}

\usepackage[margin=1.1in]{geometry}
\usepackage{amsmath,amsfonts,amssymb}
\usepackage{amsthm}
\IfFileExists{lmodern.sty}{\usepackage{lmodern}\usepackage[T1]{fontenc}}{}
\usepackage[expansion=false]{microtype}
\usepackage{graphicx}
\usepackage{epstopdf}
\usepackage{booktabs}   
\usepackage{tabularx}   
\usepackage{algorithm,algpseudocode}

\usepackage{enumitem}
\usepackage{xcolor}
\usepackage[colorlinks=true, linkcolor=blue, citecolor=blue, urlcolor=blue]{hyperref}
\usepackage{cleveref}

\numberwithin{equation}{section}
\theoremstyle{plain}
\newtheorem{theorem}{Theorem}[section]
\newtheorem{lemma}[theorem]{Lemma}
\newtheorem{corollary}[theorem]{Corollary}
\newtheorem{proposition}[theorem]{Proposition}

\theoremstyle{definition}
\newtheorem{remark}[theorem]{Remark}

\newtheorem{example}[theorem]{Example}
\crefname{hypothesis}{Hypothesis}{Hypotheses}

\newenvironment{keywords}{\par\medskip\noindent\textbf{Keywords.}\ }{\par}

\DeclareGraphicsExtensions{.eps,.pdf,.png,.jpg}

\hypersetup{
	pdftitle={Sparse Plus Low-Rank Matrix Embedding with Applications in Cancer Radiotherapy Optimization},
	pdfauthor={Mojtaba Tefagh, Gourav Jhanwar, and Masoud Zarepisheh}
}

\title{Sparse plus low-rank matrix embedding with applications in cancer radiotherapy optimization\thanks{This work was supported in part by NIH Cancer Center Support Grant/Core Grant (P30 CA008748).}}

\author{Mojtaba Tefagh\thanks{School of Informatics, the University of Edinburgh, Edinburgh, Scotland.}
	\and Gourav Jhanwar\thanks{Department of Medical Physics, Memorial Sloan Kettering Cancer Center, New York, NY, USA.}
	\and Masoud Zarepisheh\thanks{Department of Medical Physics, Memorial Sloan Kettering Cancer Center, New York, NY, USA. Correspondence e-mail: zarepism@mskcc.org.}}
\date{}

\makeatletter
	\newcounter{crefprobe}
	\def\cref@probe@first#1#2\@nil{\def\cref@probe@char{#1}}
	\def\cref@lb{[}
	\newif\ifcref@needsrepair
	\AtBeginDocument{%
		\begingroup
		\def\cref@currentlabel{}%
		\refstepcounter{crefprobe}%
		\expandafter\cref@probe@first\cref@currentlabel\cref@nomatch\@nil
		\ifx\cref@probe@char\cref@lb
		\global\cref@needsrepairfalse
		\else
		\global\cref@needsrepairtrue
		\fi
		\endgroup
		\ifcref@needsrepair
		\def\refstepcounter@noarg#1{%
			\cref@old@refstepcounter{#1}%
			\@ifundefined{cref@#1@alias}%
			{\def\@tempa{#1}}%
			{\def\@tempa{\csname cref@#1@alias\endcsname}}%
			\cref@constructprefix{#1}{\cref@result}%
			\protected@edef\cref@currentlabel{%
				[\@tempa][\arabic{#1}][\cref@result]%
				\csname p@#1\endcsname\csname the#1\endcsname}}%
		\def\refstepcounter@optarg[#1]#2{%
			\cref@old@refstepcounter{#2}%
			\@ifundefined{cref@#1@alias}%
			{\def\@tempa{#1}}%
			{\def\@tempa{\csname cref@#1@alias\endcsname}}%
			\cref@constructprefix{#2}{\cref@result}%
			\protected@edef\cref@currentlabel{%
				[\@tempa][\arabic{#2}][\cref@result]%
				\csname p@#2\endcsname\csname the#2\endcsname}}%
		\fi
	}
	\makeatother
	
\begin{document}

		\maketitle
		
		\begin{abstract}
			
			Decomposing a matrix into sparse and low-rank components is central to robust principal component analysis and has broad applications in machine learning, signal processing, and computer vision. Classical formulations seek to recover the underlying sparse and low-rank structure. We instead introduce \emph{sparse-plus-low-rank matrix embedding} (SLME), whose goal is to construct a computationally efficient surrogate for a large dense matrix, without requiring its components to be interpretable. Given $A\in\mathbb{R}^{m\times n}$, SLME approximates $A \approx S+HW$, where $S$ is sparse, $H\in\mathbb{R}^{m\times r}$, $W\in\mathbb{R}^{r\times n}$, and $r\ll \min\{m,n\}$. The resulting matrix-vector product can be evaluated as $Sx+H(Wx)$ in $\mathrm{nnz}(S)+r(m+n)$ operations, rather than the $mn$ operations required by $Ax$. Our primary motivation arises from optimization problems in cancer radiotherapy treatment planning, where a large dense \emph{dose-influence matrix} is a major computational bottleneck.
			
			We formulate SLME as a bi-objective nonconvex optimization problem that balances approximation error against the computational cost of the downstream tasks. We then develop \emph{R3-Trust}, an efficient trust-region algorithm that approximates the Pareto frontier in a single parameter-free run. Each point on the resulting frontier provides a sparse-plus-low-rank representation with a different balance between accuracy and downstream computational cost. Experiments on clinical radiotherapy matrices show that decompositions obtained from existing recovery-oriented formulations can be suboptimal for the embedding objective. Conversely, experiments on synthetic instances demonstrate that SLME and R3-Trust can also be applied to sparse-plus-low-rank recovery, where they compare favorably with state-of-the-art recovery methods in both reconstruction accuracy and computational time. All code and data are publicly available at \url{https://github.com/Radiotherapy-Optimization/SLME-matrix-embedding}.
			
		\end{abstract}
		
		\begin{keywords}
			matrix decomposition, robust principal component analysis, principal component pursuit, radiotherapy optimization, low-rank plus sparse, sparse plus low-rank
		\end{keywords}
		
		\setlength{\parindent}{1em}
		\setlength{\parskip}{0.5em}
		
		\section{Introduction}\label{sec:introduction}
		
		Sparse-plus-low-rank matrix decompositions arise throughout machine learning,
		signal processing, computer vision, and large-scale data analysis. Existing
		formulations are primarily recovery-oriented: they seek to identify the underlying latent
		low-rank and sparse components from observations corrupted by noise or
		outliers~\cite{Candes2009-fm,Chandrasekaran2011-vc,Zhou2014-ug}. In this work,
		we consider a fundamentally different objective. Rather than recovering
		interpretable components, we seek a computationally efficient representation
		of a given large dense matrix.
		
		Our motivating application is \emph{cancer radiotherapy treatment planning optimization}~\cite{shepard1999optimizing,romeijn2005column}, where many
		optimization problems can be expressed in the following form:
		
		\begin{equation}\label{RO-Problem}
			\min_x \ f(Ax,x)
			\quad \text{s.t.} \quad
			g(Ax,x)\leq 0,\qquad x\in X,
		\end{equation}
		where $x$ represents the treatment-machine parameters,
		$A\in\mathbb{R}^{m\times n}$ is a patient-specific \emph{dose-influence
			matrix}, and $Ax$ is the radiation-dose distribution delivered to the patient.
		The functions $f$ and $g$ represent the treatment-planning objectives and
		clinical constraints, respectively. The matrix $A$ is typically large and
		dense, with $m$ ranging from approximately $10^5$ to $5\times 10^5$ and $n$
		ranging from approximately $10^3$ to $10^5$. Consequently, repeated
		multiplications by $A$ and $A^\top$ often constitute the primary computational
		bottleneck in treatment-planning optimization algorithms.
		
		A common strategy in clinical implementations is to sparsify $A$ by discarding
		entries whose magnitudes fall below a prescribed threshold. Although this
		entrywise truncation reduces storage and the cost of computations involving $A$, it may remove a
		large number of individually small entries whose cumulative contribution to the
		radiation-dose distribution is substantial. In recent
		work~\cite{tefagh2025compressed}, we showed empirically that this loss can be
		reduced by approximating $A$ as $A\approx S+HW$, where $S$ is sparse and $HW$
		is low rank. In numerical experiments, this representation
		reduced solution times while producing higher-quality radiotherapy plans than entrywise sparsification alone.
		
		The decomposition used in that study, however, was constructed heuristically.
		Entries exceeding a prescribed magnitude threshold were assigned to $S$, after
		which a truncated singular value decomposition of a specified rank was applied
		to the residual to construct $HW$. As a result, the sparsity, rank,
		representation cost, and approximation error were controlled only indirectly
		through separately selected parameters. The present work develops a principled
		framework that directly balances these competing quantities.
		
		Motivated by this application, we introduce \emph{sparse-plus-low-rank matrix
			embedding} (SLME). Given a large dense matrix $A\in\mathbb{R}^{m\times n}$,
		SLME constructs an approximation
		
		\begin{equation}\label{SLME-representation}
			A\approx S+HW,
		\end{equation}
		where $S\in\mathbb{R}^{m\times n}$ is sparse,
		$H\in\mathbb{R}^{m\times r}$, $W\in\mathbb{R}^{r\times n}$, and
		$r\ll\min\{m,n\}$. The resulting matrix-vector product can be evaluated as
		$Sx+H(Wx)$ at a cost proportional to
		$\operatorname{nnz}(S)+r(m+n)$, rather than the $mn$ operations required for
		a dense multiplication by $A$. Thus, SLME embeds a large dense matrix into a substantially more compact structured representation.
		
		\Cref{fig:motivating_example} illustrates the potential of SLME using a
		dose-influence matrix $A$ arising in the treatment planning of a patient with
		lung cancer. The horizontal axis reports the representation size relative to
		$\operatorname{nnz}(A)$, while the vertical axis shows the relative
		Frobenius-norm error,
		$100\times \|A-(S+HW)\|_{\text{F}}/\|A\|_{\text{F}}$. We compare sparse-only, low-rank-only, and
		sparse-plus-low-rank approximations. For the sparse-only approximation, $A$ is
		thresholded entrywise, and the curve is obtained by varying the threshold. For
		the low-rank-only approximation, truncated singular value decompositions of
		varying rank are used, with representation size $r(m+n)$. For SLME, the factors
		$(S,H,W)$ are computed using the R3-Trust algorithm introduced later, with
		representation size $\operatorname{nnz}(S)+r(m+n)$.
		
		The representative point marked by a star in the left panel requires only about
		$1.5\%$ as many stored scalar values as $A$, that is,
		$\operatorname{nnz}(S)+r(m+n)\approx
		0.015\,\operatorname{nnz}(A)$, while achieving a relative approximation error
		of approximately $2.5\%$. Across all representation budgets shown, SLME also
		achieves substantially smaller error than either the sparse-only approximation
		in the left panel or the low-rank-only approximation in the right panel. These
		results suggest that the sparse component efficiently captures the large,
		localized entries of $A$, while the low-rank component captures its distributed
		global structure; neither component alone provides comparable approximation
		accuracy at the same representation cost.
		
		\begin{figure}[htb!]
			\centering
			\includegraphics[width=0.45\linewidth]
			{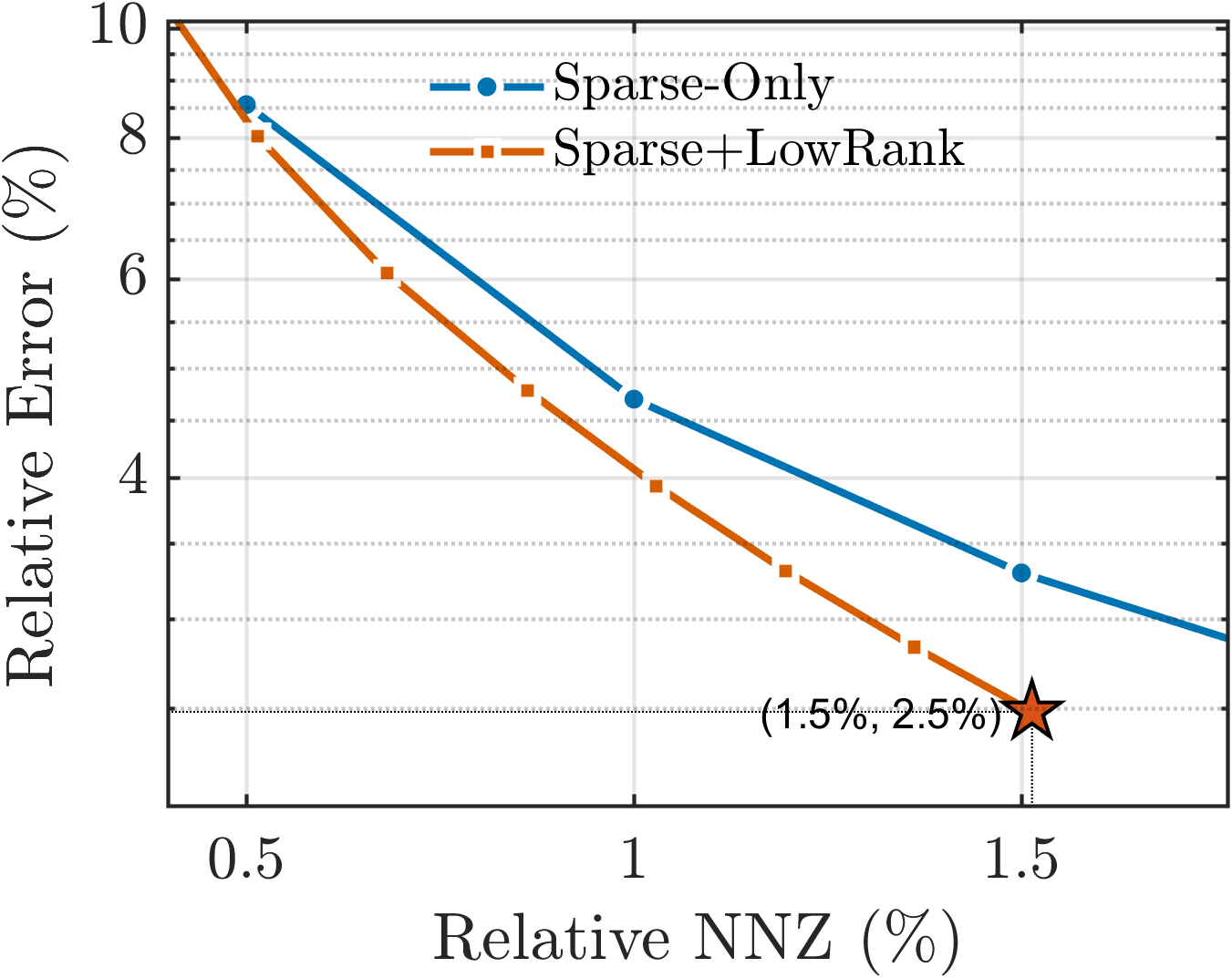}
			\includegraphics[width=0.45\linewidth]
			{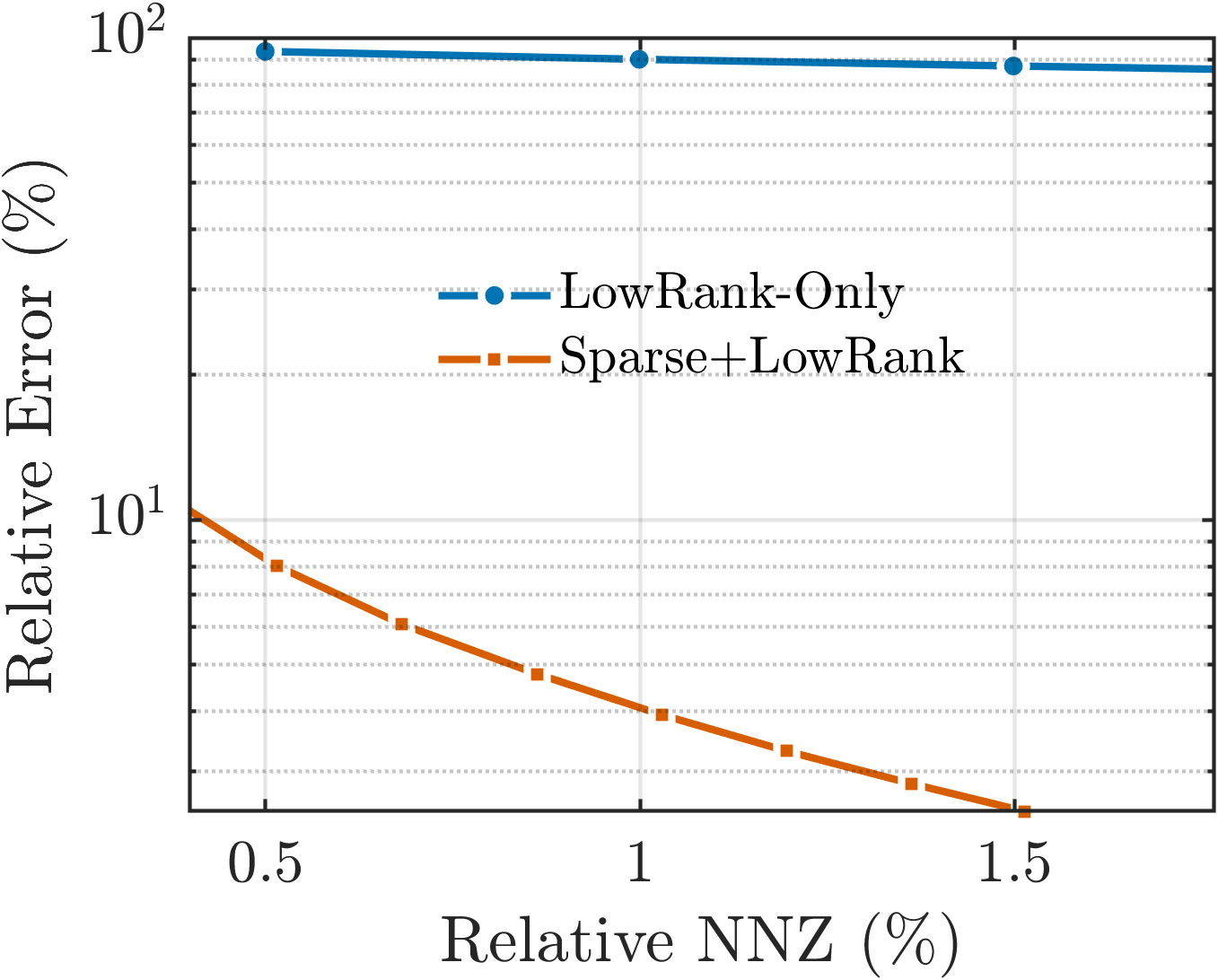}
			\caption{Motivating example for SLME using a dose-influence matrix $A$ from
				the treatment planning of a patient with lung cancer. Each panel plots the
				relative Frobenius-norm approximation error against the representation size
				relative to $\operatorname{nnz}(A)$. SLME is compared with a sparse-only
				approximation (left) and a low-rank-only approximation (right). Across all
				representation budgets shown, SLME achieves smaller approximation error
				than either alternative.}
			\label{fig:motivating_example}
		\end{figure}
		
		We formulate SLME as a bi-objective nonconvex optimization problem that
		explicitly balances approximation error against the computational cost of
		storing and applying the structured representation (hereafter, the \emph{representation cost}). Rather than prescribing a
		single trade-off parameter, the formulation seeks the Pareto frontier of
		accuracy--cost trade-offs. Each Pareto-optimal solution provides a potentially
		useful embedding corresponding to a different representation budget or desired
		level of approximation accuracy.
		
		To approximate this Pareto frontier efficiently, we develop a family of
		trust-region algorithms consisting of R-Trust, R2-Trust, and R3-Trust. The
		proposed algorithms generate decompositions spanning a range of representation
		budgets in a single run, without requiring a user-specified accuracy--cost
		trade-off parameter.\\\\
		\textbf{Contributions.}
		The main contributions of this work are as follows:
		
		\begin{itemize}[leftmargin=1.5em]
			
			\item \emph{Methodological.}
			We introduce SLME, a new perspective on sparse-plus-low-rank matrix
			decomposition aimed at matrix compression and efficient downstream computation
			rather than recovery of latent structure.
			
			\item \emph{Algorithmic.}
			We develop a family of fast and scalable trust-region algorithms (R3-Trust
			and its two related variants, R-Trust and R2-Trust) for approximating the
			Pareto frontier of the nonconvex SLME problem.
			
			\item \emph{Computational.}
			Using real dose-influence matrices from radiotherapy optimization, we
			demonstrate that R3-Trust is substantially faster and achieves more
			favorable accuracy--cost trade-offs than representative recovery-oriented methods, namely ADMM~\cite{boyd2011distributed}, GoDec~\cite{Zhou_undated-ur}, and VB~\cite{Babacan2012-gm}. On synthetic
			instances, we further show that SLME and R3-Trust can be applied to
			sparse-plus-low-rank recovery, where they compare favorably with existing
			methods in terms of reconstruction accuracy and computational time. An
			open-source MATLAB implementation of all proposed algorithms and
			experiments is available in the
			\href{https://github.com/Radiotherapy-Optimization/SLME-matrix-embedding}{SLME-matrix-embedding}
			GitHub repository.
			
		\end{itemize}
		The remainder of the paper is organized as follows. Section \ref{sec:background} reviews recovery-oriented sparse-plus-low-rank models and
		the benchmark methods used in our numerical comparisons. Section
		\ref{sec:slme} presents the SLME framework, its bi-objective formulation, and
		its relationship to matrix recovery. Section \ref{sec:algorithms} develops the proposed trust-region algorithms and establishes their theoretical properties. Section \ref{sec:numerics} reports numerical experiments on real radiotherapy matrices and synthetic recovery instances. Finally, Section \ref{sec:conclusion} summarizes the main findings and discusses directions for future research.

		\section{Background and benchmark methods}\label{sec:background}
		
		We briefly review recovery-oriented sparse-plus-low-rank models and the three classes of methods used as benchmarks in our numerical experiments. These
		methods differ from SLME in their primary objective: they seek to recover
		latent structure, whereas SLME seeks a computationally efficient representation
		of a given matrix. For a comprehensive review of recovery-oriented methods, we
		refer the reader to~\cite{Zhou2014-ug}.

		\subsection{Sparse-plus-low-rank recovery models}
		
		The classical starting point is \emph{principal component analysis} (PCA),
		which seeks a best rank-$k$ approximation of a data matrix:
		
		\begin{equation}\tag{PCA}\label{PCA}
			\min_{L,E} \ \|E\|_{\text{F}}^2
			\quad \text{s.t.} \quad
			\operatorname{rank}(L)\leq k,\qquad A=L+E.
		\end{equation}
		
		Here, $A\in\mathbb{R}^{m\times n}$ is the observed data matrix,
		$L\in\mathbb{R}^{m\times n}$ is its rank-$k$ approximation, and
		$E\in\mathbb{R}^{m\times n}$ represents small-magnitude noise. Although
		problem~\eqref{PCA} is nonconvex because of the rank constraint, a globally optimal solution can be computed efficiently by truncating the singular value decomposition of $A$~\cite{eckart1936approximation}. An insightful interpretation of PCA is that it can recover the underlying low-dimensional data $L$ from the observations in $A$, even when they are contaminated with random small noise $E$. However, this interpretation also reveals a weakness of PCA in handling gross errors, such as large-magnitude outliers in $E$. To address this problem, \emph{robust PCA} (RPCA) was introduced:
		
		\begin{equation}\tag{RPCA}\label{RPCA}
			\min_{L,S} \
			\operatorname{rank}(L)+\lambda\|S\|_0
			\quad \text{s.t.} \quad
			A=L+S,
		\end{equation}
		where $S$ represents sparse gross corruptions, $\|S\|_0$ denotes the number of nonzero entries in $S$, and $\lambda>0$ controls the trade-off between low rank and sparsity. Unlike PCA, formulation~\eqref{RPCA} is robust to sparse errors of large magnitude, but it does not explicitly account for the small, dense noise handled by PCA. This limitation was later addressed by stable RPCA formulations, which introduce an additional noise matrix $E$ and model the observations as $A=L+S+E$~\cite{Zhou2010-ji}. In contrast to PCA, problem~\eqref{RPCA} does not admit a solution through a single truncated singular value decomposition and is computationally challenging because both the rank and cardinality functions are nonconvex and discontinuous. Numerous methods have therefore been proposed for solving RPCA and its variants; see~\cite{Zhou2014-ug} and the references therein. We next review three major classes of methods and select one representative algorithm from each class for use in our numerical comparisons.

		\subsection{Benchmark solution methods}
		
		\textbf{Convex optimization techniques.}
		A widely studied convex relaxation of RPCA, known as \emph{principal component pursuit} (PCP), replaces the rank and cardinality functions with their convex surrogates, the nuclear norm and the $\ell_1$-norm, respectively:
		
		\begin{equation}\tag{PCP}\label{PCP}
			\min_{L,S} \ \|L\|_*+\lambda\|S\|_1
			\quad \text{s.t.} \quad
			A=L+S.
		\end{equation}
		
		Here, $\|L\|_*$ denotes the nuclear norm, defined as the sum of the singular values of $L$, and $\|S\|_1$ denotes the entrywise $\ell_1$-norm of $S$. The parameter $\lambda>0$ controls the trade-off between low rank and sparsity. Under suitable incoherence and sparsity assumptions, PCP exactly recovers the underlying low-rank and sparse components with high probability~\cite{Candes2009-fm,Chandrasekaran2011-vc}. To account for both sparse gross corruptions ($S$) and small dense noise ($E$), PCP was later extended to \emph{stable principal component pursuit} (SPCP)~\cite{Zhou2010-ji}:
		
		\begin{equation}\tag{SPCP}\label{SPCP}
			\min_{L,S,E} \ \|L\|_*+\lambda\|S\|_1+\frac{\mu}{2}\|E\|_{\text{F}}^2
			\quad \text{s.t.} \quad
			A=L+S+E,
		\end{equation}
		where $E$ represents small-magnitude dense noise, while $\lambda>0$ and $\mu>0$ balance low rank, sparsity, and data fidelity. Under the assumptions used in the corresponding recovery analyses, the common choice $\lambda=1/\sqrt{\max\{m,n\}}$ yields favorable theoretical guarantees for both PCP and SPCP~\cite{Candes2009-fm,Zhou2010-ji}. Moreover, when the entries of $E$ are independent Gaussian random variables with variance $\sigma^2$, a theoretically motivated choice is $\mu=1/(\sqrt{2\max\{m,n\}}\,\sigma)$~\cite{Zhou2010-ji}. In practice, however, both parameters are often treated as hyper-parameters and tuned to the data and application.
		
		The nuclear norm and the entrywise $\ell_1$-norm are convex but nonsmooth, and both admit efficiently computable proximal operators~\cite{Parikh2014-ft}. The proximal operator of the nuclear norm is singular-value thresholding~\cite{cai2010singular}, whereas that of the $\ell_1$-norm is elementwise soft thresholding. Consequently, PCP and SPCP are well suited to first-order splitting methods, including augmented Lagrangian methods and their variants, such as the \emph{alternating direction method of multipliers} (ADMM), inexact augmented Lagrangian methods, and schemes with adaptive penalty parameters~\cite{Candes2009-fm,boyd2011distributed,Aybat2015-fo,zarepisheh2018computation}. Proximal-gradient and accelerated proximal-gradient methods have also been developed for these models~\cite{Zhou2010-ji,nesterov2007gradient,beck2009fast}. Alternatively, nuclear-norm minimization can be represented through semidefinite programming~\cite{Chandrasekaran2011-vc,fazel2002matrix}, although general-purpose interior-point methods are typically impractical for large matrices because of their computational and memory requirements~\cite{liu2010interior}. Although different algorithms in this category (i.e., convex optimization techniques) may have different computational performance and some may be advantageous for a particular application, the convex property of problem~\eqref{SPCP} implies that all algorithms are expected to provide similar results. In this paper, we use ADMM \cite{boyd2011distributed} for comparison purposes.

		\textbf{Nonconvex optimization techniques.}
		A second class of methods addresses the rank and cardinality constraints directly rather than replacing them with convex surrogates. These methods can be broadly divided into factorization-based and projection-based approaches. \emph{Factorization-based} methods parameterize the low-rank component as $L=HW$ and alternately update the factors $H$ and $W$ together with the sparse component $S$. By avoiding the repeated singular value decompositions required by nuclear-norm minimization, these methods can scale to substantially larger matrices, although the resulting optimization problems are nonconvex and generally lack global optimality guarantees~\cite{Shen2014-tq,Zhou2013-tj,cabral2013unifying}. \emph{Projection-based} methods instead retain $L$ and $S$ as explicit variables and alternate between projections onto the sets of low-rank and sparse matrices. A representative method in this category is \emph{GoDec}~\cite{Zhou_undated-ur}, which solves
		
		\begin{equation}\tag{GoDec}\label{GoDec}
			\min_{L,S} \ \|A-L-S\|_{\text{F}}^2
			\quad \text{s.t.} \quad
			\operatorname{rank}(L)\leq r,\qquad \|S\|_0\leq k.
		\end{equation}
		
		GoDec alternates between a rank-$r$ projection of $A-S$, computed using a truncated singular value decomposition and optionally accelerated through randomized techniques, and a $k$-sparse projection of $A-L$, obtained by retaining its $k$ largest-magnitude entries. The method therefore requires two user-specified hyper-parameters: an upper bound $r$ on the rank of the low-rank component and an upper bound $k$ on the cardinality of the sparse component. Because GoDec was identified as one of the top-performing methods in the comparative study of~\cite{Zhou2014-ug}, we use it as our representative nonconvex baseline.
		
		\textbf{Nonoptimization techniques.}
		A third class of methods replaces a deterministic optimization formulation with a probabilistic Bayesian model and infers the low-rank and sparse components from their posterior distributions. Sparsity-promoting priors are imposed on the latent factors or singular values associated with the low-rank component and on the entries of the sparse component. The effective rank, sparsity pattern, and noise level can then be inferred from the data, substantially reducing the manual hyper-parameter tuning required by many optimization-based methods~\cite{ding2011bayesian,Babacan2012-gm,zhao2014robust}. Posterior inference may be performed using Markov chain Monte Carlo sampling~\cite{ding2011bayesian} or, more efficiently, variational Bayesian approximations~\cite{Babacan2012-gm,zhao2014robust}. In our numerical experiments, we use the variational Bayesian robust PCA method of Babacan et al.~\cite{Babacan2012-gm}, denoted by \emph{VB}, as the representative Bayesian baseline. This choice is motivated by the comparative study in~\cite{Zhou2014-ug}, in which VB was among the top-performing methods.

		\section{Sparse-plus-low-rank matrix embedding}\label{sec:slme}
		
		\textbf{Bi-objective formulation.}
		SLME seeks to balance approximation accuracy, measured by $\|A-(S+HW)\|_{\text{F}}^2$, against the cost of storing and applying the structured representation $(S,H,W)$. Assuming dense factors $H\in\mathbb{R}^{m\times r}$ and $W\in\mathbb{R}^{r\times n}$, this cost is proportional to $\operatorname{nnz}(S)+r(m+n)$. We refer to this quantity as the \emph{representation cost}: it measures the cost of storing the representation and of applying it in downstream computations, such as the matrix-vector products in~\eqref{RO-Problem}, and should not be confused with the computational cost of \emph{constructing} the representation, that is, the runtime of the algorithms developed in Section~\ref{sec:algorithms}. This leads naturally to the following bi-objective optimization problem: 
		
		\begin{equation} 
			\min_{H,W,S} \{\operatorname{nnz}(H,W,S),~\|E\|_{\text{F}}^2\}~~ \text{s.t.}~~ A = S+HW+E. 
		\end{equation}
		
		Let $L = HW$. Then, the representation cost of SLME, $\operatorname{nnz}(H,W,S)$, can be expressed as $(m+n)\times \operatorname{rank}(L) + \|S\|_0$. Therefore, we can formulate the SLME problem as follows:
		\begin{equation}\label{LSME} 
			\min_{H,W,S} \{(m+n)\times \operatorname{rank}(L)+\|S\|_0,~\|E\|_{\text{F}}^2\}~~ \text{s.t.}~~ A = S+L+E. 
		\end{equation}
		
		Problem~\eqref{LSME} seeks the Pareto frontier between representation cost and approximation accuracy. Each Pareto-optimal solution provides a potentially useful embedding corresponding to a different representation budget.\ 
		
		\textbf{Relation to matrix recovery}. Although SLME resembles sparse-plus-low-rank recovery models such as
		\eqref{RPCA}, \eqref{PCP}, and \eqref{SPCP}, the two settings differ in two fundamental respects:
		
		\begin{itemize}[leftmargin=1.5em]
			
			\item\emph{Objective.}
			In matrix recovery, the low-rank and sparse components are latent objects of interest. In contrast, SLME seeks a computationally efficient representation of a given matrix, without requiring the individual components to be interpretable or to correspond to an underlying data-generating structure.
			
			\item\emph{Model tuning.}
			Recovery formulations typically seek a particular decomposition and therefore require tuning parameters that determine the desired balance among rank, sparsity, and noise. SLME, by contrast, is parameter-free and generates a collection of Pareto-optimal embeddings spanning different levels of approximation accuracy and representation cost.
		\end{itemize}
		
		Consistent with these different objectives, $E$ represents measurement noise in
		recovery models, whereas in SLME, $E=A-L-S$ is an approximation residual whose
		magnitude quantifies the accuracy sacrificed for a lower representation cost.
		\section{Recursive trust-region algorithms}\label{sec:algorithms}
		This section develops the algorithmic core of the paper. We begin with
		R-Trust, a recursive algorithm that builds an SLME representation incrementally:
		at each iteration, it solves in closed form a subproblem that seeks the best
		sparse-plus-low-rank approximation of the current residual within a budget of
		at most $m+n$ additional stored values, a constraint we interpret as a trust
		region. We then introduce two generalizations: R2-Trust, which accommodates
		approximate sparse and low-rank projection subroutines, and R3-Trust, which
		instantiates these subroutines with randomized linear algebra and thereby
		scales to the large dense matrices arising in radiotherapy. The section
		concludes with a convergence analysis.
		
		\subsection{R-Trust}
		
		The design of R-Trust rests on three equivalent characterizations of the
		squared Frobenius norm of a matrix $A\in\mathbb{R}^{m\times n}$
		\cite{hogben2013handbook}:
		\begin{align}
			\|A\|^2_{\text{F}} &= \sum_{i=1}^{\min\{m,n\}} \sigma_{i}^{2} \label{Fro3} \\
			&= \sum_{i=1}^{m} \sum_{j=1}^{n} a_{ij}^{2} \label{Fro1} \\
			&= \operatorname{Tr} \left(A^TA\right), \label{Fro2}
		\end{align}
		where $a_{ij}$ denotes the $(i,j)$th entry of $A$ and $\sigma_i$ its $i$th
		largest singular value. The entrywise form \eqref{Fro1} quantifies the effect
		of removing individual entries, while the spectral form \eqref{Fro3}
		quantifies the effect of removing leading singular triplets $(u_i, \sigma_i, v_i)$; R-Trust exploits
		precisely this interplay.
		
		The first building block is the classical Eckart--Young--Mirsky
		theorem~\cite{eckart1936approximation,10.1093/qmath/11.1.50}, which
		identifies a best rank-$k$ approximation of a matrix and quantifies the
		resulting reduction in the squared Frobenius norm: subtracting this
		approximation removes exactly the sum of squares of the $k$ leading singular
		values.
		
		\begin{lemma}\label{lem1}
			Let $A\in\mathbb{R}^{m\times n}$ have singular values
			$\sigma_1\geq\sigma_2\geq\cdots\geq0$ and a singular value
			decomposition $A=\sum_{i=1}^{\min\{m,n\}}\sigma_i u_i v_i^T$, and let
			$1\leq k\leq\min\{m,n\}$.
			Then $L^*=\sum_{i=1}^{k}\sigma_i u_i v_i^T$ solves
			\[
			\begin{array}{ll}
				\mbox{minimize}	& \|A-L\|_{\text{F}}^2 \\
				\mbox{subject to}	& \operatorname{rank}(L)\leq k,
			\end{array}
			\]
			and the optimal objective value is
			$\|A\|^2_{\text{F}}-\sum _{i=1}^{k}\sigma_i^2$.
		\end{lemma}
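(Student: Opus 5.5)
The plan has two parts: compute the objective value at $L^*$ directly, then show that no feasible $L$ does better. For the first part, $A-L^*=\sum_{i=k+1}^{\min\{m,n\}}\sigma_i u_i v_i^T$ is itself written in SVD form, because the $u_i$ and the $v_i$ are orthonormal. By the spectral characterization \eqref{Fro3},
\[
\|A-L^*\|_{\text{F}}^2=\sum_{i>k}\sigma_i^2=\|A\|_{\text{F}}^2-\sum_{i=1}^k\sigma_i^2 .
\]
This establishes the claimed objective value, provided $L^*$ is shown to be optimal.

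For optimality, I will prove a lower bound. Let $L$ have rank at most $k$ and set $E=A-L$. I will use Weyl's inequality for singular values,
\[
\sigma_{i+j-1}(X+Y)\le\sigma_i(X)+\sigma_j(Y).
\]
Apply it with $X=E$, $Y=L$ and $j=k+1$. Since $\sigma_{k+1}(L)=0$, this gives $\sigma_{i+k}(A)\le\sigma_i(E)$ for every $i\ge1$. Summing squares and using \eqref{Fro3} for $E$ yields
\[
\|E\|_{\text{F}}^2\ \ge\ \sum_{i\ge1}\sigma_i(E)^2\ \ge\ \sum_{i\ge 1}\sigma_{i+k}(A)^2=\sum_{i>k}\sigma_i^2 .
\]
This matches the value attained by $L^*$, so $L^*$ is optimal.

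The main obstacle is justifying the Weyl-type inequality, which I will prove from the Courant--Fischer min-max characterization. Write $\sigma_{i+k}(A)$ as the minimum, over subspaces $V$ of codimension at most $i+k-1$, of $\max_{x\in V,\|x\|=1}\|Ax\|$. Choose $V=\ker(L)\cap V_E$, where $V_E$ is the codimension-$(i-1)$ subspace orthogonal to the top $i-1$ right singular vectors of $E$. Since $\ker(L)$ has codimension at most $k$, $V$ has codimension at most $i+k-1$. For a unit $x\in V$ we have $Ax=Ex$, so $\|Ax\|\le\sigma_i(E)$, which gives $\sigma_{i+k}(A)\le\sigma_i(E)$ directly.

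As an alternative, one could cite the Eckart--Young--Mirsky theorem \cite{eckart1936approximation,10.1093/qmath/11.1.50} outright. Since the lemma is stated as that classical result, a short argument along the lines above, or a citation, suffices.
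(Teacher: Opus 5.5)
Your proof is correct, but it takes a different route from the paper's. The paper computes the value at $L^*$ exactly as you do: it notes that $A-L^*=\sum_{i>k}\sigma_i u_i v_i^T$ is again an SVD and applies the spectral form of the Frobenius norm. For optimality, however, it simply invokes Mirsky's theorem, which says the truncated SVD is a best rank-$k$ approximation in every unitarily invariant norm. That is precisely the ``alternative'' you mention at the end.

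You instead prove optimality from first principles. Choosing $V=\ker(L)\cap V_E$ in the Courant--Fischer characterization gives the bound $\sigma_i(A-L)\ge\sigma_{i+k}(A)$ for every feasible $L$ and every $i$. Summing squares then yields the Frobenius lower bound. Your codimension count and the observation that $Ax=Ex$ on $\ker(L)$ are exactly right. You only need the convention $\sigma_j(A)=0$ for $j>\min\{m,n\}$, or equivalently the restriction $i\le\min\{m,n\}-k$, which also guarantees $V\neq\{0\}$. You also never use the general Weyl inequality, since your subspace argument proves the needed special case directly.

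The paper's citation buys brevity and, through Mirsky, generality over all unitarily invariant norms. Yours buys a self-contained argument, and it gives up no generality. The domination $\sigma_i(A-L)\ge\sigma_{i+k}(A)=\sigma_i(A-L^*)$ for all $i$ implies, by monotonicity of symmetric gauge functions, that $\|A-L\|\ge\|A-L^*\|$ in any unitarily invariant norm. So one extra line recovers Mirsky's full statement.
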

		\begin{proof}
			By Mirsky's low-rank approximation theorem, the truncated singular value
			decomposition $L^*$ is a best rank-$k$ approximation of $A$ with respect to
			the Frobenius norm (indeed, with respect to any unitarily invariant
			norm)~\cite{10.1093/qmath/11.1.50}. It remains to compute the optimal
			objective value. Since
			$A-L^*=\sum_{i=k+1}^{\min\{m,n\}}\sigma_i u_i v_i^T$ is itself a singular
			value decomposition, applying \eqref{Fro3} to $A-L^*$ yields
			\[
			\|A-L^*\|^2_{\text{F}}
			=\sum _{i=k+1}^{\min\{m,n\}}\sigma _{i}^{2}
			=\|A\|^2_{\text{F}}-\sum _{i=1}^{k}\sigma_i^2,
			\]
			which completes the proof.
		\end{proof}
		
		The second building block is the sparse analogue of Lemma \ref{lem1}: the best
		approximation of $A$ by a $k$-sparse matrix, that is, a matrix with at most
		$k$ nonzero entries, together with the corresponding reduction in the squared
		Frobenius norm. For an index set
		$\Omega\subseteq\{1,\ldots,m\}\times\{1,\ldots,n\}$, let
		$\mathcal{P}_\Omega(A)$ denote the matrix that agrees with $A$ on $\Omega$
		and is zero elsewhere:
		\[
		\mathcal{P}_\Omega(A)_{ij} =
		\begin{cases}
			a_{ij} & \text{if $(i,j)\in\Omega$}, \\
			0 & \text{otherwise}.
		\end{cases}
		\]
		By \eqref{Fro1}, $\mathcal{P}_\Omega(A)$ is the unique minimizer of
		$\|A-S\|_{\text{F}}^2$ over all matrices $S$ with
		$\operatorname{supp}(S)\subseteq\Omega$. With a slight abuse of notation,
		for an integer $1\leq k\leq mn$, we also write $\mathcal{P}_{k}(A)$ for the
		matrix that agrees with $A$ on its $k$ largest-magnitude entries (with ties
		broken arbitrarily) and is zero elsewhere; that is,
		$\mathcal{P}_{k}(A)=\mathcal{P}_{\Omega_k}(A)$, where $\Omega_k$ collects the
		positions of the $k$ largest-magnitude entries of $A$.
		
		\begin{lemma}\label{lem3}
			Let $A\in\mathbb{R}^{m\times n}$ and $1\leq k\leq mn$. Then
			$S^*=\mathcal{P}_{k}(A)$ solves
			\[
			\begin{array}{ll}
				\mbox{minimize}	& \|A-S\|_{\text{F}}^2 \\
				\mbox{subject to}	& \|S\|_0\leq k,
			\end{array}
			\]
			and the optimal objective value is
			$\|A\|^2_{\text{F}}-\|\mathcal{P}_{k}(A)\|_{\text{F}}^2$.
		\end{lemma}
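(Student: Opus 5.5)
The plan is to split the minimization over $S$ into two nested stages: first choose the support, then choose the values on that support, and optimize each stage in turn using the entrywise form \eqref{Fro1}. Any feasible $S$ has support contained in some index set $\Omega$ with $|\Omega|=k$ (pad the support arbitrarily if $\|S\|_0<k$, which is possible since $k\leq mn$). So the problem equals the minimum over all $|\Omega|=k$ of the minimum over $S$ with $\operatorname{supp}(S)\subseteq\Omega$. The inner problem is already solved in the text preceding the lemma: its unique minimizer is $\mathcal{P}_\Omega(A)$.

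Next I will compute the inner optimal value. By \eqref{Fro1}, $\|A-\mathcal{P}_\Omega(A)\|_{\text{F}}^2=\sum_{(i,j)\notin\Omega}a_{ij}^2$, which equals $\|A\|_{\text{F}}^2-\sum_{(i,j)\in\Omega}a_{ij}^2$. This is the same as $\|A\|_{\text{F}}^2-\|\mathcal{P}_\Omega(A)\|_{\text{F}}^2$.

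The outer problem is therefore to maximize $\sum_{(i,j)\in\Omega}a_{ij}^2$ over index sets with $|\Omega|=k$. This is the only step that needs a genuine argument, though it is elementary. I will use an exchange argument: if $\Omega$ contains a position $p$ and omits a position $q$ with $a_q^2>a_p^2$, then swapping $p$ for $q$ strictly increases the sum. Hence every maximizing set consists of $k$ largest-magnitude entries. Equivalently, I can argue directly: if the squared entries are sorted in decreasing order, the $t$-th largest element of any $\Omega$ is at most the $t$-th largest squared entry of $A$, and summing over $t$ gives the bound.

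Ties do not affect the conclusion, because all choices of $\Omega_k$ yield the same sum. Thus $\Omega_k$ is optimal, $S^*=\mathcal{P}_{\Omega_k}(A)=\mathcal{P}_k(A)$ is feasible and a minimizer, and the optimal value is $\|A\|_{\text{F}}^2-\|\mathcal{P}_k(A)\|_{\text{F}}^2$.
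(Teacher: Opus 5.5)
Your proof is correct and follows essentially the same route as the paper: reduce to $S=\mathcal{P}_\Omega(A)$, write the residual as $\|A\|_{\text{F}}^2-\|\mathcal{P}_\Omega(A)\|_{\text{F}}^2$, and then choose $\Omega$ as the $k$ largest-magnitude positions. Your nested support-then-values formulation and exchange argument are slightly more careful than the paper's version. The paper starts from an assumed optimal $S^*$ and asserts the top-$k$ choice without justification, whereas your approach gets existence of a minimizer and the top-$k$ step for free.
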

		\begin{proof}
			Let $S^*$ be an optimal solution. From \eqref{Fro1}, the objective function decomposes elementwise as $\sum_{i,j} (a_{ij} - s_{ij})^2$. To minimize this sum, any nonzero entry in $S^*$ must perfectly match the corresponding entry in $A$, implying $s^*_{ij} = a_{ij}$ for all $(i,j) \in \operatorname{supp}(S^*)$. 
			
			Thus, $S^* = \mathcal{P}_\Omega(A)$ for some support set $\Omega$ with cardinality $\operatorname{card}(\Omega) \leq k$. Substituting this into the objective yields:
			\begin{equation*}
				\begin{split}
					\|A-S^*\|^2_{\text{F}} &= \sum_{i=1}^{m}\sum_{j=1}^{n} (a_{ij} - s^*_{ij})^{2} \\
					&= \sum_{i=1}^{m}\sum_{j=1}^{n} a_{ij}^{2} - 2\sum_{i=1}^{m}\sum_{j=1}^{n} a_{ij}s^*_{ij} + \sum_{i=1}^{m}\sum_{j=1}^{n} {s^*_{ij}}^2 \\
					&= \|A\|^2_{\text{F}} - \|S^*\|^2_{\text{F}},
				\end{split}
			\end{equation*}
			where the last equality follows because $a_{ij}s^*_{ij} = {s^*_{ij}}^2$ for all $i,j$. To minimize the residual error, we must maximize $\|S^*\|^2_{\text{F}}$. This is achieved by choosing $\Omega$ to correspond to the indices of the $k$ largest entries of $A$ in absolute magnitude. Thus, $S^* = \mathcal{P}_k(A)$, establishing the desired result.
		\end{proof}
		
		Lemmas \ref{lem1} and \ref{lem3} price the two types of updates in a common currency.
		Appending a rank-one term $\sigma_1u_1v_1^T$ to the sparse-plus-low-rank representation
		\eqref{SLME-representation} adds $m+n$ stored values and reduces
		$\|A\|^2_{\text{F}}$ by $\sigma_1^2$, whereas transferring the $m+n$
		largest-magnitude entries of $A$ to the sparse component adds the same number
		of stored values and reduces $\|A\|^2_{\text{F}}$ by
		$\|\mathcal{P}_{m+n}(A)\|^2_{\text{F}}$. This suggests a greedy strategy:
		allocate a budget of $m+n$ additional stored values (the cost of exactly one
		rank-one term) and seek the best sparse-plus-low-rank approximation
		affordable within that budget. In analogy with classical trust-region
		methods~\cite{conn2000trust}, which restrict each step to a region where the
		subproblem can be solved reliably, this budget constraint defines a trust
		region within which the subproblem admits a closed-form solution: the budget
		affords either a single rank-one term or $m+n$ sparse entries, and the
		optimal choice is determined by comparing the two candidate reductions,
		$\sigma_1^2$ and $\|\mathcal{P}_{m+n}(A)\|^2_{\text{F}}$. The following
		theorem formalizes this observation.
		
		\begin{theorem}\label{thm:1}
			Let $\sigma_1$, $u_1$, and $v_1$ denote the leading singular value and the
			corresponding left and right singular vectors of $A\in\mathbb{R}^{m\times n}$.
			A solution to the optimization problem
			\begin{equation*}
				\begin{array}{ll}
					\mbox{minimize}	& \|A-(S+L)\|_{\text{F}}^2 \\
					\mbox{subject to}	& \|S\|_0+(m+n)\operatorname{rank}(L)\leq m+n
				\end{array}
			\end{equation*}
			is given by
			\[
			(S^*,L^*)=
			\begin{cases}
				(0,\ \sigma_1u_1v_1^T) & \text{if } \|\mathcal{P}_{m+n}(A)\|^2_{\text{F}}\leq\sigma_1^2,\\[2pt]
				(\mathcal{P}_{m+n}(A),\ 0) & \text{otherwise}.
			\end{cases}
			\]
		\end{theorem}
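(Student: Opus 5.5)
We prove the claim by splitting the feasible set according to the value of $\operatorname{rank}(L)$, solving the problem on each piece with Lemma~\ref{lem1} or Lemma~\ref{lem3}, and then comparing the two optimal values. The constraint $\|S\|_0+(m+n)\operatorname{rank}(L)\leq m+n$ forces $\operatorname{rank}(L)\in\{0,1\}$, since $\operatorname{rank}(L)\geq 2$ would make the left-hand side at least $2(m+n)>m+n$. The feasible set is therefore the union of two disjoint branches:
\[
\mathcal{F}_0=\{(S,0):\|S\|_0\leq m+n\},\qquad
\mathcal{F}_1=\{(0,L):\operatorname{rank}(L)=1\}.
\]
In the second branch, $\operatorname{rank}(L)=1$ leaves a budget of $0$ for $\|S\|_0$, so $S=0$. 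The optimal value of the original problem is the smaller of the two branch minima.

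On $\mathcal{F}_0$ the problem is exactly that of Lemma~\ref{lem3} with $k=m+n$. When $m+n\leq mn$, the lemma gives the minimizer $\mathcal{P}_{m+n}(A)$ with value $\|A\|_{\text{F}}^2-\|\mathcal{P}_{m+n}(A)\|_{\text{F}}^2$. If $m+n>mn$ (a degenerate case, e.g.\ $m=1$), we read $\mathcal{P}_{m+n}(A)=A$; the value is then $0$, and the formula still holds.

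On $\mathcal{F}_1$, we compare against the relaxed set $\{\operatorname{rank}(L)\leq1\}$. By Lemma~\ref{lem1} with $k=1$, the minimum over this set is $\|A\|_{\text{F}}^2-\sigma_1^2$, attained at $\sigma_1u_1v_1^T$. Two cases arise.
\begin{itemize}
\item If $\sigma_1>0$, this minimizer has rank one, so it lies in $\mathcal{F}_1$ and is optimal there.
\item If $\sigma_1=0$, then $A=0$. Every candidate listed in the theorem then gives value $0$, so the claim is trivial. Moreover, $L=0$ is feasible anyway with $S=0$, as part of $\mathcal{F}_0$.
\end{itemize}

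Finally, we compare the two branch minima. Their difference is $\sigma_1^2-\|\mathcal{P}_{m+n}(A)\|_{\text{F}}^2$.
\begin{itemize}
\item If $\|\mathcal{P}_{m+n}(A)\|_{\text{F}}^2\leq\sigma_1^2$, the rank-one branch is at least as good, so $(0,\sigma_1u_1v_1^T)$ is globally optimal.
\item Otherwise the sparse branch is strictly better, so $(\mathcal{P}_{m+n}(A),0)$ is globally optimal.
\end{itemize}
This is exactly the stated case split.

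The main obstacle is not any computation. It is the careful bookkeeping of the feasible set: noticing that a rank-one $L$ forces $S=0$, so that mixed solutions are excluded, and handling the edge cases $A=0$ and $m+n>mn$. Once these points are settled, both branches are direct applications of Lemmas~\ref{lem1} and~\ref{lem3}.
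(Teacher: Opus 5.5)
Your proposal is correct and follows the paper's proof. The constraint forces either $L=0$ with $\|S\|_0\le m+n$, or $\operatorname{rank}(L)=1$ with $S=0$. Lemma~\ref{lem3} and Lemma~\ref{lem1} (with $k=1$) solve these two branches, and comparing $\|A\|_{\text{F}}^2-\|\mathcal{P}_{m+n}(A)\|_{\text{F}}^2$ with $\|A\|_{\text{F}}^2-\sigma_1^2$ gives the stated case split. You also handle two degenerate cases the paper skips: $A=0$, where the rank-one infimum is not attained, and $m+n>mn$, which lies outside the stated range of Lemma~\ref{lem3}.
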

		\begin{proof}
			Since $\|S\|_0\geq0$ and $\operatorname{rank}(L)$ is a nonnegative integer,
			the constraint holds if and only if either
			\[
			L=0 \text{ and } \|S\|_0\leq m+n,
			\]
			or
			\[
			\operatorname{rank}(L)= 1 \text{ and } S=0.
			\]
			By Lemma \ref{lem3}, the best feasible point of the first type is
			$S=\mathcal{P}_{m+n}(A)$, with objective value
			$\|A\|^2_{\text{F}}-\|\mathcal{P}_{m+n}(A)\|^2_{\text{F}}$. By Lemma \ref{lem1} with
			$k=1$, the best feasible point of the second type is
			$L=\sigma_1u_1v_1^T$, with objective value
			$\|A\|^2_{\text{F}}-\sigma_1^2$. Comparing the two values yields the
			claimed solution.
		\end{proof}
		
		R-Trust, summarized in Algorithm \ref{alg:1}, applies Theorem \ref{thm:1} recursively. The
		algorithm maintains a residual $A_r$, initialized to $A$, which stores the
		part of the matrix not yet captured by the representation. At each iteration,
		it solves the trust-region subproblem of Theorem \ref{thm:1} for $A_r$ exactly: if
		the sparse candidate achieves the larger reduction, the support of
		$\mathcal{P}_{m+n}(A_r)$ is added to the accumulated support $\Omega$;
		otherwise, the leading singular triplet of $A_r$ is appended to the factors
		$H$ and $W$. In either case, the selected approximation is subtracted from
		$A_r$, and the procedure repeats until $\|A_r\|_{\text{F}}<\epsilon$. Each
		iteration thus enlarges the representation by at most $m+n$ stored values
		while achieving the largest possible reduction of the residual norm within
		that budget.
		
		It is worth mentioning that Theorem~\ref{thm:1} guarantees global
		optimality of each recursive subproblem within the specified trust region. It does not, by itself, imply that
		the complete sequence is globally optimal for the original bi-objective SLME problem.
		\begin{algorithm}[H]
			\caption{Recursive Trust-Region (R-Trust)}\label{alg:1}
			\begin{algorithmic}
				\Require $\epsilon>0$
				\Ensure $\|A - (S + L)\|_{\text{F}} < \epsilon$
				\State $A_r \gets A$
				\State $H \gets \emptyset$
				\State $W \gets \emptyset$
				\State $\Omega \gets \emptyset$
				\While{$\|A_r\|_{\text{F}} \geq \epsilon$}
				\State $(u, \sigma, v) \gets \operatorname{SVD}(A_r,1)$
				\If{$\|\mathcal{P}_{m+n}(A_r)\|_{\text{F}}^2 > \sigma^2$} \Comment{Theorem \ref{thm:1}}
				\State $\Omega \gets \Omega\cup \operatorname{supp}\big(\mathcal{P}_{m+n}(A_r)\big)$ \Comment{Remark \ref{remark:1}}
				\State $A_r \gets A_r - \mathcal{P}_\Omega(A_r)$
				\Else
				\State $H \gets \operatorname{addCol}(H, \sigma u)$
				\State {\footnotesize $\triangleright$ $\operatorname{addCol}(H, \sigma u)$ appends $\sigma u$ to $H$ as a new last column}
				\State $W \gets \operatorname{addRow}(W, v^T)$
				\State {\footnotesize $\triangleright$ $\operatorname{addRow}(W, v^T)$ appends $v^T$ to $W$ as a new last row}
				\State $A_r \gets A_r - \sigma u v^T$
				\EndIf
				\EndWhile
				\State $L \gets H W$
				\State $S \gets \mathcal{P}_\Omega(A - L)$
			\end{algorithmic}
		\end{algorithm}
		
		\begin{remark}\label{remark:1}
			In the sparse step of Algorithm \ref{alg:1}, the residual is updated by subtracting
			$\mathcal{P}_\Omega(A_r)$ over the entire accumulated support $\Omega$,
			rather than only over the newly selected positions
			$\operatorname{supp}\big(\mathcal{P}_{m+n}(A_r)\big)$. This comes at no
			cost: the positions in $\Omega$ are already counted in $\|S\|_0$, so
			zeroing the residual there does not increase the size of the
			representation, while by \eqref{Fro1} it can only decrease
			$\|A_r\|_{\text{F}}$. Indeed, entries at previously selected positions may
			become nonzero again after intermediate low-rank updates, and removing
			them anew accelerates the reduction of the residual for free.
		\end{remark}
		
		\subsection{R2-Trust}
		
		Each iteration of R-Trust requires the leading singular triplet of the
		residual and its $m+n$ largest-magnitude entries. Both computations are exact but can become expensive for large matrices. In this subsection, we
		generalize R-Trust so that both may be replaced by approximate projection
		subroutines.

		While the development of R-Trust relied heavily on the singular value and elementwise definitions of the Frobenius norm \eqref{Fro3} and \eqref{Fro1}, we now leverage its trace definition \eqref{Fro2}. This perspective reveals that if the right factor $W$ of a low-rank approximation $L=HW$ is fixed and has orthonormal rows, the optimal left factor $H$ can be identified via a simple closed-form solution. This provides a powerful mechanism for ``rectifying'' previously selected low-rank components at each iteration. More importantly, it opens the door to a generalized framework, Rectified R-Trust (R2-Trust), where the exact SVD can be replaced by any low-rank projection to improve the computational efficiency.

		\begin{theorem}\label{thm:2}
			Let $A\in\mathbb{R}^{m\times n}$, and let $W\in\mathbb{R}^{r\times n}$ have
			orthonormal rows, that is, $WW^T=I$. Then
			\begin{equation}\label{eq:1}
				\arg\min_{H\in\mathbb{R}^{m\times r}}\|A - H W\|_{\text{F}}^2 = A W^T,
			\end{equation}
			and
			\begin{equation}\label{eq:2}
				\min_{H\in\mathbb{R}^{m\times r}}\|A - H W\|_{\text{F}}^2 = \|A\|_{\text{F}}^2 - \|A W^T\|_{\text{F}}^2.
			\end{equation}
		\end{theorem}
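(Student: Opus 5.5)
The plan is to use the trace form \eqref{Fro2} of the Frobenius norm and complete the square in $H$. Write $f(H)=\|A-HW\|_{\text{F}}^2=\operatorname{Tr}\big((A-HW)^T(A-HW)\big)$ and expand it. The cross term can be rearranged by the cyclic property of the trace, and the quadratic term simplifies through the assumption $WW^T=I$:
\[
\operatorname{Tr}(W^TH^THW)=\operatorname{Tr}(H^THWW^T)=\|H\|_{\text{F}}^2 .
\]
This gives
\[
f(H)=\|A\|_{\text{F}}^2-2\operatorname{Tr}(H^TAW^T)+\|H\|_{\text{F}}^2 .
\]

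The next step is to complete the square. Observe that
\[
\|H-AW^T\|_{\text{F}}^2=\|H\|_{\text{F}}^2-2\operatorname{Tr}(H^TAW^T)+\|AW^T\|_{\text{F}}^2 ,
\]
and subtract this from the expansion of $f(H)$ to obtain the identity
\[
f(H)=\|A\|_{\text{F}}^2-\|AW^T\|_{\text{F}}^2+\|H-AW^T\|_{\text{F}}^2 .
\]
The first two terms do not depend on $H$, and the last term is nonnegative. It vanishes if and only if $H=AW^T$. This shows at once that $AW^T$ is the unique minimizer, which is \eqref{eq:1}, and that the optimal value is $\|A\|_{\text{F}}^2-\|AW^T\|_{\text{F}}^2$, which is \eqref{eq:2}. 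Uniqueness is what justifies writing $\arg\min$ as a single matrix.

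As a cross-check, one could instead set the gradient $2(HW-A)W^T$ to zero, giving $H=AW^T$, and appeal to strict convexity, since the Hessian is $2I\otimes WW^T=2I$. The completing-the-square route avoids a separate convexity argument, so I would use that one. No earlier lemma is needed beyond \eqref{Fro2}.

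There is no real obstacle here. The only point needing care is bookkeeping the transposes in the cross term, showing $\operatorname{Tr}(A^THW)=\operatorname{Tr}(H^TAW^T)$ via transpose and cyclic invariance. The hypothesis $WW^T=I$ is used in exactly one place: collapsing $\|HW\|_{\text{F}}^2$ to $\|H\|_{\text{F}}^2$.
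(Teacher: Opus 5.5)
Your proof is correct. It arrives at the same identity the paper relies on,
\[
\|A-HW\|_{\text{F}}^2=\|H-AW^T\|_{\text{F}}^2+\big(\|A\|_{\text{F}}^2-\|AW^T\|_{\text{F}}^2\big),
\]
but you derive it differently. The paper extends $W$ to an orthogonal matrix $\tilde W=\begin{bmatrix}W\\ \bar W\end{bmatrix}$. It then applies the Pythagorean split $\|B\|_{\text{F}}^2=\|BW^T\|_{\text{F}}^2+\|B\bar W^T\|_{\text{F}}^2$ to $B=A-HW$, so the constant first appears as $\|A\bar W^T\|_{\text{F}}^2$ and is only afterwards rewritten as $\|A\|_{\text{F}}^2-\|AW^T\|_{\text{F}}^2$. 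Your completing-the-square argument needs no orthogonal complement $\bar W$. It also isolates exactly where $WW^T=I$ enters, namely in collapsing $\|HW\|_{\text{F}}^2$ to $\|H\|_{\text{F}}^2$, which makes it slightly more self-contained. The paper's route, in exchange, shows the optimal value geometrically as the energy of $A$ orthogonal to the row space of $W$: the optimal residual is $A(I-W^TW)=A\bar W^T\bar W$. Your Hessian cross-check is also fine. The Kronecker ordering depends on the vectorization convention, but either way it equals $2I$.
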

		\begin{proof}
			Choose $\bar{W}\in\mathbb{R}^{(n-r)\times n}$ whose rows extend the rows of
			$W$ to an orthonormal basis of $\mathbb{R}^n$, so that the stacked matrix
			$\tilde{W}=\begin{bmatrix} W\\ \bar{W}\end{bmatrix}$ is orthogonal; in
			particular, $\tilde{W}^T\tilde{W}=W^TW+\bar{W}^T\bar{W}=I$ and
			$W\bar{W}^T=0$. For any $B\in\mathbb{R}^{m\times n}$, \eqref{Fro2} gives
			\[
			\|B\|_{\text{F}}^2
			=\operatorname{Tr}\big(B\tilde{W}^T\tilde{W}B^T\big)
			=\|B\tilde{W}^T\|_{\text{F}}^2
			=\|BW^T\|_{\text{F}}^2+\|B\bar{W}^T\|_{\text{F}}^2.
			\]
			Applying this identity to $B=A-HW$, and using $WW^T=I$ and
			$W\bar{W}^T=0$, we obtain
			\[
			\|A-HW\|_{\text{F}}^2
			=\|AW^T-H\|_{\text{F}}^2+\|A\bar{W}^T\|_{\text{F}}^2.
			\]
			The second term does not depend on $H$, and the first term vanishes
			precisely when $H=AW^T$. Hence $H^*=AW^T$ is the unique minimizer, and the
			optimal objective value is
			$\|A\bar{W}^T\|_{\text{F}}^2=\|A\|_{\text{F}}^2-\|AW^T\|_{\text{F}}^2$.
		\end{proof}

		Theorem \ref{thm:2} has two algorithmic consequences for Algorithm
		\ref{alg:1}. First, whenever the residual $A_r$ and the factor $W$ are at
		hand, the optimal companion factor is available in closed form as
		$H=A_rW^T$. This suggests appending a correction step to each iteration that
		readjusts $H$ to the most recent $W$. Second, by \eqref{eq:2}, the quantity
		$\|A_rW^T\|^2_{\text{F}}$ equals the reduction in the squared residual norm
		achieved by fitting the low-rank component on the rows of $W$; crucially,
		this measure of progress remains valid even when $W$ has not been obtained
		from an exact singular value decomposition, so it can replace the singular
		value $\sigma$ used in Algorithm \ref{alg:1}.
		
		These observations allow us to generalize Algorithm \ref{alg:1} in two
		directions. The exact rank-one SVD may be replaced by any subroutine
		$\operatorname{lowRankProjection}$ that takes the residual $A_r$ and returns
		an approximate solution of the problem in Lemma \ref{lem1}, under the sole
		assumption that the returned factor has orthonormal rows orthogonal to the
		kernel of $A_r$; Corollary \ref{remark:2} below shows that this assumption
		guarantees that the accumulated factor $W$ retains orthonormal rows, as
		required by Theorem \ref{thm:2}. Likewise, the exact selection of the
		largest-magnitude entries may be replaced by any subroutine
		$\operatorname{sparseProjection}$ that returns an approximate solution of the
		problem in Lemma \ref{lem3}. Both subroutines take a common parameter
		$k\geq1$, referred to as the \emph{batch size}, which scales the trust
		region: $\operatorname{lowRankProjection}(A_r,k)$ returns the right factor of
		an approximate rank-$k$ projection of the residual, and
		$\operatorname{sparseProjection}(A_r,k)$ returns the support of approximately
		$k(m+n)$ largest-magnitude entries. One iteration may therefore append up to
		$k(m+n)$ stored values rather than the $m+n$ of Algorithm~\ref{alg:1}, which
		corresponds to $k=1$. The resulting method, R2-Trust, is summarized in
		Algorithm \ref{alg:rect}. At each iteration, it forms both candidate updates,
		accepts the one that achieves the larger reduction of the residual norm
		relative to the number of stored values it adds, and then rectifies $H$
		through \eqref{eq:1}. 
		
		\begin{algorithm}[H]
			\caption{Rectified R-Trust (R2-Trust)}\label{alg:rect}
			\begin{algorithmic}
				\Require $\epsilon>0$; batch size $k\geq1$; $\operatorname{lowRankProjection}(A_r,k)$ has orthonormal rows which are orthogonal to the kernel of $A_r$
				\Ensure $\|A - (S + L)\|_{\text{F}} < \epsilon$
				\State $A_r \gets A$
				\State $S \gets 0$
				\State $W \gets \emptyset$
				\State $\Omega \gets \emptyset$
				\While{$\|A_r\|_{\text{F}} \geq \epsilon$}
				\State $\Omega_r \gets \operatorname{sparseProjection}(A_r, k)$ \Comment{$\operatorname{supp}\big(\mathcal{P}_{m+n}(A_r)\big)$ in Algorithm \ref{alg:1}}
				\State {\footnotesize $\triangleright$ returns the support of the $\approx k(m+n)$ largest-magnitude entries of $A_r$}
				\State $\tilde{\Omega} \gets \Omega\cup \Omega_r$
				\State $W_r \gets \operatorname{lowRankProjection}(A_r, k)$ \Comment{$\operatorname{SVD}(A_r,1)$ in Algorithm \ref{alg:1}}
				\State {\footnotesize $\triangleright$ returns the right factor ($k$ orthonormal rows) of an approximate rank-$k$ projection}
				\State $\tilde{W} \gets \operatorname{addRow}(W, W_r)$
				\If{$\dfrac{\|\mathcal{P}_{\tilde{\Omega}}(A_r)\|_{\text{F}}^2}{\operatorname{card}(\Omega_r)} > \dfrac{\|A_r\tilde{W}^T\|_{\text{F}}^2}{(m+n)\operatorname{rank}(W_r)}$} \Comment{Theorem \ref{thm:2} \eqref{eq:2}}
				\State $\Omega \gets \tilde{\Omega}$
				\State $S \gets S + \mathcal{P}_{\Omega}(A_r)$
				\State $A_r \gets A - S$
				\State $ W \gets \operatorname{lowRankProjection}(A_r,\operatorname{rank}(W))$
				\Else
				\State $W \gets \tilde{W}$
				\EndIf
				\State $H_r \gets A_r W^T$ \Comment{Theorem \ref{thm:2} \eqref{eq:1}}
				\State $A_r \gets A_r-H_r W$ \Comment{$A_r W^T = 0$ by Corollary \ref{remark:2}}
				\EndWhile
				\State $A_r \gets A - S$
				\State $ W \gets \operatorname{lowRankProjection}(A_r,\operatorname{rank}(W))$
				\State $H \gets (A - S) W^T$
				\State $L \gets H W$
				\State $S \gets \operatorname{sparseProjection}(A-L,\Omega)$ \Comment{$\mathcal{P}_\Omega(A - L)$ in Algorithm \ref{alg:1}}
			\end{algorithmic}
		\end{algorithm}
		
		\begin{corollary}\label{remark:2}
			Throughout Algorithm \ref{alg:rect}, the rows of $W$ remain orthonormal,
			and the residual at the end of each iteration satisfies $A_rW^T=0$.
		\end{corollary}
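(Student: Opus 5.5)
The plan is to argue by induction over the iterations of Algorithm~\ref{alg:rect}, maintaining the invariant that at the end of every iteration (and at initialization) $WW^T=I$ and $A_rW^T=0$. At initialization $W=\emptyset$, so both statements hold vacuously (an empty factor has zero rows). The key structural fact I will use throughout is the hypothesis on $\operatorname{lowRankProjection}$. Its output has orthonormal rows that are orthogonal to $\ker(A_r)$, so those rows lie in $\ker(A_r)^\perp=\operatorname{row}(A_r)$.

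The first step is to show that, whichever branch is taken, the $W$ present just before the rectification lines $H_r\gets A_rW^T$, $A_r\gets A_r-H_rW$ has orthonormal rows. There are two cases. In the sparse branch, $W$ is freshly recomputed as $\operatorname{lowRankProjection}(A_r,\operatorname{rank}(W))$, so it has orthonormal rows directly by the assumption on the subroutine. In the low-rank branch, $W$ becomes $\tilde W=\operatorname{addRow}(W,W_r)$. The old $W$ has orthonormal rows by the inductive hypothesis, and $W_r$ has orthonormal rows by assumption. It remains to check the cross block $W_rW^T=0$. By the inductive hypothesis $A_rW^T=0$, so every row of the old $W$ lies in $\ker(A_r)$. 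The rows of $W_r$ are orthogonal to $\ker(A_r)$, hence $W_rW^T=0$, and $\tilde W\tilde W^T=I$. Here $A_r$ is unchanged in this branch between the end of the previous iteration and the call to $\operatorname{lowRankProjection}$, so the inductive hypothesis applies to the same $A_r$.

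The second step is the rectification computation. With $WW^T=I$, the new residual is $A_r'=A_r-A_rW^TW$, and
\[
A_r'W^T=A_rW^T-A_rW^T(WW^T)=0 ,
\]
which re-establishes the invariant. This is just the optimality identity behind \eqref{eq:1} in Theorem~\ref{thm:2}: the optimal residual is orthogonal to the rows of $W$. This holds regardless of which $A_r$ entered the rectification, so the sparse branch needs no further argument beyond orthonormality.

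I expect the main obstacle to be the low-rank branch, specifically making precise why the appended rows are orthogonal to the existing ones. It hinges on reading ``orthogonal to the kernel of $A_r$'' as ``contained in the row space of $A_r$'' and on the fact that the invariant $A_rW^T=0$ places the old rows in that kernel. I will also note the degenerate cases. If $\operatorname{rank}(W)=0$ in the sparse branch, then $W$ stays empty and the claims are vacuous. The final post-loop recomputation of $W$ also has orthonormal rows by the same subroutine assumption, so orthonormality holds ``throughout'' the algorithm.
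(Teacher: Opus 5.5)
Your proof is correct and follows the same route as the paper: the rectification identity $(A_r-A_rW^TW)W^T=0$ places the existing rows of $W$ in $\ker(A_r)$. The hypothesis on $\operatorname{lowRankProjection}$ then makes the appended rows orthogonal to them, so the augmented factor keeps orthonormal rows. Your explicit induction, and your treatment of the sparse-branch and post-loop recomputations of $W$ (which have orthonormal rows directly by the subroutine assumption), only spell out cases the paper's shorter proof leaves implicit.
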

		\begin{proof}
			Each iteration ends by setting $H_r=A_rW^T$ and replacing $A_r$ with
			$A_r-H_rW$. Since $WW^T=I$,
			\[
			(A_r - H_r W) W^T = A_r W^T - A_r W^T\big(W W^T\big) = A_r W^T - A_r W^T = 0,
			\]
			so every row of $W$ lies in the kernel of the updated residual. At the next
			iteration, the rows returned by $\operatorname{lowRankProjection}$ are, by
			assumption, orthonormal and orthogonal to this kernel; in particular, they
			are orthogonal to all previous rows of $W$, so the augmented factor
			$\tilde{W}$ again has orthonormal rows. 
		\end{proof}
		
		\begin{remark}\label{rem:orthogonality}
			In practice, standard rank-revealing factorizations, such as the exact SVD or Randomized SVD (RSVD), naturally generate right singular vectors that are inherently orthonormal and strictly orthogonal to the kernel of the residual. Consequently, they satisfy the conditions of Corollary \ref{remark:2} by default. However, if an alternative low-rank projection heuristic is employed to prioritize computational efficiency and does not implicitly guarantee orthogonality to the kernel of $A_r$, the new rows $W_r$ may overlap with the existing basis. In such cases, one must explicitly orthogonalize the new rows against the accumulated basis $W$. This is achieved by applying a Gram--Schmidt projection, $W_r \gets W_r - (W_r W^T)W$, followed by orthonormalization of the rows (e.g., via QR), prior to augmentation. This correction ensures that the augmented matrix $\tilde{W}$ rigorously maintains row orthonormality without compromising the algorithmic framework.
		\end{remark}
		
		\subsection{R3-Trust}
		
		Whereas R2-Trust is a general framework that accepts any pair of projection
		subroutines satisfying its assumptions, R3-Trust is a concrete instantiation
		of this framework based on randomized linear algebra: the low-rank projection
		is carried out by a randomized SVD, and the sparse projection by an
		approximate quantile threshold.
		
		Consider first the sparse projection. The exact projection selects the
		$k(m+n)$ largest-magnitude entries of the residual, which requires sorting
		all $mn$ entries. Its support, however, is completely characterized by a
		single cutoff value (the $k(m+n)$th largest magnitude) which can be
		estimated cheaply as an empirical quantile. Suppose that $Q$ is an
		approximate quantile function such that, for all $0\leq x\leq 1$,
		\[
		\frac{\operatorname{card}\big(\{(i,j)\mid |A_{ij}| \geq Q(A,x) \}\big)}{mn}\approx x.
		\]
		Then $q=Q\big(A_r, \frac{k(m+n)}{mn}\big)$ serves as a cutoff threshold, and
		the sparse projection returns the index set
		$\Omega_r=\{(i,j)\mid |{(A_r)}_{ij}| \geq q\}$, which contains approximately
		$k(m+n)$ entries. In our implementation, $Q$ is estimated from a small random
		sample of the entries of $A_r$, so the cost of the sparse projection reduces
		to a single pass that compares each entry against $q$.
		
		Consider next the low-rank projection. The exact truncated SVD is replaced by
		a randomized SVD~\cite{doi:10.1137/090771806}, which computes an approximate
		rank-$k$ factorization using only a small number of passes over the residual.
		The right factor $V$ returned by $\operatorname{RSVD}(A_r,k)$ has orthonormal
		columns that lie in the row space of $A_r$; consequently, $W_r=V^T$ has
		orthonormal rows orthogonal to the kernel of $A_r$, exactly as Algorithm
		\ref{alg:rect} requires.
		
		The resulting method, R3-Trust, is summarized in Algorithm \ref{alg:rand}. It
		follows Algorithm \ref{alg:rect} with $\operatorname{sparseProjection}$
		realized by the sampled quantile threshold,
		$\operatorname{lowRankProjection}$ realized by the randomized SVD with batch
		rank $k$, and the rectification step that refits the entire low-rank factor by
		a randomized SVD. The batch size $k$, introduced with Algorithm~\ref{alg:rect}, remains the main parameter of the algorithm and
		controls the rank appended per low-rank update.
		\begin{algorithm}[H]
			\caption{Randomized R2-Trust (R3-Trust)}\label{alg:rand}
			\begin{algorithmic}
				\Require $\epsilon>0$; batch size $k\geq1$
				\Ensure $\|A - (S + L)\|_{\text{F}} < \epsilon$
				\State $A_r \gets A$
				\State $S \gets 0$
				\State $W \gets \emptyset$
				\State $\Omega \gets \emptyset$
				\While{$\|A_r\|_{\text{F}} \geq \epsilon$}
				\State $q \gets Q(A_r, \dfrac{k(m+n)}{mn})$ \Comment{Quantile estimated from a sample of $A_r$ entries}
				\State $\Omega_r \gets \{(i,j)\mid |{(A_r)}_{ij}| \geq q\}$ \Comment{$\operatorname{sparseProjection}(A_r, k)$ in Algorithm \ref{alg:rect}}
				\State $\tilde{\Omega} \gets \Omega\cup \Omega_r$ 
				\State $V \gets \operatorname{RSVD}(A_r,k)$ \Comment{Randomized SVD, $\operatorname{lowRankProjection}$ in Algorithm \ref{alg:rect}}
				\State $W_r \gets V^T$ 
				\State $\tilde{W} \gets \operatorname{addRow}(W, W_r)$ 
				\If{$\dfrac{\|\mathcal{P}_{\tilde{\Omega}}(A_r)\|_{\text{F}}^2}{\operatorname{card}(\Omega_r)} > \dfrac{\|A_r\tilde{W}^T\|_{\text{F}}^2}{k(m+n)}$}
				\State $\Omega \gets \tilde{\Omega}$
				\State $S \gets S + \mathcal{P}_{\Omega}(A_r)$
				\State $A_r \gets A - S$ 
				\State $ W \gets \operatorname{RSVD}(A_r,\operatorname{rank}(W))$ 
				\Else
				\State $W \gets \tilde{W}$
				\EndIf
				\State $H_r \gets A_r W^T$
				\State $A_r \gets A_r-H_r W$
				\EndWhile
				\State $H \gets (A-S) W^T$
				\State $L \gets H W$
				\State $S \gets \mathcal{P}_\Omega(A - L)$
			\end{algorithmic}
		\end{algorithm}
		
		\begin{remark}[per-iteration complexity]\label{rem:complexity}
			These replacements also lower the dominant per-iteration costs. Computing
			an exact truncated SVD of a dense residual by standard dense methods
			requires $\mathcal{O}(mnp)$ operations with
			$p=\min\{m,n\}$~\cite{golub2013matrix}, whereas the randomized SVD with
			batch rank $k$ requires only $\mathcal{O}(mnk)$~\cite{doi:10.1137/090771806}.
			Likewise, the exact sparse projection sorts all $mn$ residual entries at a
			cost of $\mathcal{O}(mn\log(mn))$, whereas the sampled quantile threshold
			sorts only a sample of $s\ll mn$ entries and then compares each entry
			against the resulting cutoff in a single pass, at a cost of
			$\mathcal{O}(mn+s\log s)$. Since the SVD dominates, for square matrices
			($m=n$) and a fixed batch size $k$, the per-iteration cost drops from
			cubic, $\mathcal{O}(n^3)$ for R-Trust, to quadratic, $\mathcal{O}(n^2)$
			for R3-Trust, consistent with the order-of-magnitude speedups observed in
			Section~\ref{sec:numerics}.
		\end{remark}
		
		\subsection{Convergence analysis}\label{subsec:convergence}
		
		Before detailing our convergence analysis, it is instructive to contextualize it within the broader landscape of sparse-plus-low-rank matrix approximation. Rigorous, global convergence guarantees in this domain are notoriously challenging to establish, and the available theoretical results remain limited. While convex relaxation approaches benefit from global convergence, their associated first-order solvers typically suffer from sublinear worst-case rates, such as $\mathcal{O}(1/t)$ for standard proximal-gradient schemes and $\mathcal{O}(1/t^2)$ for accelerated variants \cite{beck2009fast,Zhou2014-ug}. Conversely, theoretical guarantees for nonconvex formulations are often strictly local. For instance, GoDec provides one of the few available analyses, proving monotonic convergence of the objective to a local minimum alongside an asymptotic, locally linear rate; however, this rate depends on the strictly unknown geometric properties of the limit point \cite{Zhou_undated-ur}. Similarly, Bayesian methods only guarantee the monotonic improvement of a variational lower bound \cite{Babacan2012-gm}.
		
		The greedy, recursive nature of our proposed trust-region algorithms introduces comparable analytical complexities. Consequently, the theoretical results presented in this section should be interpreted in this spirit: they provide meaningful but partial guarantees, establishing a baseline rate, proving its worst-case tightness, and demonstrating stronger rates under specific structural assumptions. As is standard across the literature for this class of problems, a gap remains between these worst-case theoretical bounds and the highly efficient behavior observed in practice. The numerical
		experiments in Section \ref{sec:numerics} complement the analysis by comparing the
		empirical performance of all methods considered.
		
		Throughout this subsection, let $p=\min\{m,n\}$, let $\sigma_i(A)$ denote the
		$i$th largest singular value of $A$, and assume that the batch size satisfies
		$1\leq k\leq p$ and $k(m+n)\leq mn$. We measure progress by the fraction of
		the squared Frobenius norm of the current residual removed in one iteration,
		and we analyze the exact variants of the algorithms, namely R-Trust and
		R2-Trust with exact projection subroutines and without the rectification step that updates the previously selected low-rank components.
		The analysis conveys four main messages. First, the exact rank-$k$ and
		$k(m+n)$-sparse updates each remove at least a $k/p$ fraction of the squared
		residual norm, which implies linear convergence. Second, this dimension
		dependence is tight up to constant factors: for square matrices, a Hadamard
		example shows that the largest possible decrease within the trust region is
		$\Theta(k/n)$, and hence $\Theta(1/n)$ for R-Trust, where $k=1$. Third, when
		the current residual is entrywise nonnegative, one R-Trust iteration removes
		at least a $1/\sqrt{p}$ fraction of its squared norm; in particular, this
		guarantee applies to the first iteration for nonnegative dose-influence
		matrices. Fourth, when the residual admits a sparse-plus-low-rank decomposition
		whose components do not substantially mask one another, the guaranteed
		per-iteration decrease can be considerably stronger and may become independent
		of the matrix dimensions.

		\begin{proposition}[baseline guarantee]\label{prop:baseline}
			Let $A\in\mathbb{R}^{m\times n}$ be nonzero and $1\leq k\leq p$. Then
			\begin{equation}\label{eq:baseline}
				\sum_{i=1}^{k}\sigma_i(A)^2 \geq \frac{k}{p}\,\|A\|^2_{\text{F}}
				\qquad\text{and}\qquad
				\|\mathcal{P}_{k(m+n)}(A)\|^2_{\text{F}} \geq \frac{k(m+n)}{mn}\,\|A\|^2_{\text{F}} \geq \frac{k}{p}\,\|A\|^2_{\text{F}}.
			\end{equation}
		\end{proposition}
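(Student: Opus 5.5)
Both inequalities are averaging arguments: the $k$ largest terms of a finite nonnegative sequence have at least the average share of the total. I will apply this once to the squared singular values and once to the squared entries, using the spectral and entrywise forms \eqref{Fro3} and \eqref{Fro1} of the Frobenius norm.

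\textbf{Spectral bound.} By \eqref{Fro3}, $\|A\|_{\text{F}}^2=\sum_{i=1}^{p}\sigma_i(A)^2$ with $\sigma_1^2\ge\cdots\ge\sigma_p^2\ge0$. I will show that the average of the top $k$ terms is at least the overall average. Write $\bar\sigma^2=\frac1k\sum_{i\le k}\sigma_i^2$. Each tail term satisfies $\sigma_j^2\le\sigma_k^2\le\bar\sigma^2$ for $j>k$, so the tail sum is at most $(p-k)\bar\sigma^2$. Hence $\|A\|_{\text{F}}^2\le k\bar\sigma^2+(p-k)\bar\sigma^2=p\bar\sigma^2$, which rearranges to $\sum_{i\le k}\sigma_i^2\ge\frac kp\|A\|_{\text{F}}^2$.

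\textbf{Sparse bound.} By Lemma~\ref{lem3} and \eqref{Fro1}, $\|\mathcal{P}_{k(m+n)}(A)\|_{\text{F}}^2$ is the sum of the $N=k(m+n)$ largest squared entries among the $mn$ squared entries $a_{ij}^2$. The standing assumption $k(m+n)\le mn$ ensures that $N$ is a valid count. The same averaging argument, with $N$ in place of $k$ and $mn$ in place of $p$, gives the first inequality $\|\mathcal{P}_{N}(A)\|_{\text{F}}^2\ge\frac{N}{mn}\|A\|_{\text{F}}^2$.

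\textbf{Comparison of the constants.} It remains to check $\frac{k(m+n)}{mn}\ge\frac kp$, i.e.\ $\frac{m+n}{mn}=\frac1m+\frac1n\ge\frac1{\min\{m,n\}}$. This holds because one of the two summands already equals $1/p$ and the other is positive.

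No step is a real obstacle. The only points needing care are that the averaging lemma is applied to sorted nonnegative sequences, and that $N\le mn$, so $\mathcal{P}_N$ is well defined.
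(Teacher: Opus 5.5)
Your proposal is correct and follows the paper's proof essentially verbatim: you apply the same averaging argument (the $k$ largest terms carry at least their proportional share of the total) to the squared singular values via \eqref{Fro3} and to the squared entries via \eqref{Fro1}, and then compare constants through $k(1/m+1/n)\ge k/p$. What you add is detail rather than a different route: you justify the averaging step explicitly, and you note that the standing assumption $k(m+n)\le mn$ is what makes $\mathcal{P}_{k(m+n)}$ well defined.
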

		\begin{proof}
			By \eqref{Fro3}, $\|A\|^2_{\text{F}}$ is the sum of the $p$ values
			$\sigma_1(A)^2\geq\cdots\geq\sigma_p(A)^2$, and the $k$ largest of them
			carry at least the fraction $k/p$ of the total. Similarly, by \eqref{Fro1},
			$\|A\|^2_{\text{F}}$ is the sum of the $mn$ squared entries, and the
			$k(m+n)$ largest of them carry at least the fraction $k(m+n)/(mn)$ of the
			total. Finally, $k(m+n)/(mn)=k(1/n+1/m)\geq k/p$.
		\end{proof}
		
		Proposition \ref{prop:baseline} applies verbatim to the two ``naive''
		single-component baselines: repeated entrywise thresholding (sparse-only) and
		the truncated SVD (low-rank-only) each remove at least a $k/p$ fraction of
		the squared norm per $k(m+n)$ stored values. Because every iteration of the
		proposed algorithms applies the better of the two elementary updates, the
		same guarantee yields linear convergence.
		
		\begin{corollary}[linear convergence]\label{cor:linear}
			Let $A^{(t)}_r$ denote the residual after $t$ iterations of Algorithm
			\ref{alg:1}. Then
			\[
			\|A^{(t)}_r\|^2_{\text{F}} \leq \Big(1-\frac{1}{p}\Big)^{t}\,\|A\|^2_{\text{F}},
			\]
			so the stopping criterion $\|A_r\|_{\text{F}}<\epsilon$ is
			met after at most $\lceil 2p\ln(\|A\|_{\text{F}}/\epsilon)\rceil$
			iterations for any $0<\epsilon<\|A\|_{\text{F}}$. For Algorithm
			\ref{alg:rect} with exact projections and without the rectification step, the
			same statements hold with $1/p$ replaced by $k/p$ and the iteration bound
			by $\lceil (2p/k)\ln(\|A\|_{\text{F}}/\epsilon)\rceil$.
		\end{corollary}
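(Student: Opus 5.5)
The plan is to show that a single iteration removes at least a $1/p$ fraction (respectively $k/p$) of the squared residual norm, and then iterate.

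\textbf{One iteration of Algorithm~\ref{alg:1}.} Fix an iteration with current residual $A_r$.

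In the low-rank branch, the new residual is $A_r-\sigma_1u_1v_1^T$. By Lemma~\ref{lem1} with $k=1$, its squared norm is $\|A_r\|_{\text{F}}^2-\sigma_1(A_r)^2$.

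In the sparse branch, the new residual is $A_r-\mathcal{P}_\Omega(A_r)$, where $\Omega\supseteq\operatorname{supp}(\mathcal{P}_{m+n}(A_r))$. By \eqref{Fro1} it equals $\|A_r\|^2_{\text{F}}-\|\mathcal{P}_\Omega(A_r)\|^2_{\text{F}}$. This is at most $\|A_r\|^2_{\text{F}}-\|\mathcal{P}_{m+n}(A_r)\|^2_{\text{F}}$, because enlarging the support only adds squared entries (Remark~\ref{remark:1}).

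The branch is chosen by comparing the two candidate reductions. Hence the realized reduction is at least $\max\{\sigma_1(A_r)^2,\|\mathcal{P}_{m+n}(A_r)\|^2_{\text{F}}\}$, and by Proposition~\ref{prop:baseline} with $k=1$ this is at least $\|A_r\|^2_{\text{F}}/p$. If $A_r=0$ the inequality holds trivially. This gives
\[
\|A^{(t+1)}_r\|^2_{\text{F}}\le(1-1/p)\|A^{(t)}_r\|^2_{\text{F}},
\]
and induction on $t$ yields the stated bound.

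\textbf{Iteration count.} We need $(1-1/p)^t\|A\|^2_{\text{F}}<\epsilon^2$. Since $\ln(1-1/p)\le-1/p$, it suffices that $t/p>2\ln(\|A\|_{\text{F}}/\epsilon)$. Strictness at the ceiling is the only fiddly point. When $p=1$, one step already gives a zero residual. When $p\ge2$, the inequality $\ln(1-1/p)<-1/p$ is strict, so $t=\lceil 2p\ln(\|A\|_{\text{F}}/\epsilon)\rceil$ suffices; this $t$ is positive because $\epsilon<\|A\|_{\text{F}}$.

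\textbf{Algorithm~\ref{alg:rect} with exact projections, no rectification.}

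In the low-rank branch, $W_r$ consists of the top $k$ right singular vectors of $A_r$. By Corollary~\ref{remark:2}, $A_rW^T=0$ for the previous rows, so $\|A_r\tilde W^T\|^2_{\text{F}}=\|A_rW_r^T\|^2_{\text{F}}=\sum_{i\le k}\sigma_i(A_r)^2$. The update $H_r=A_r\tilde W^T$ then removes exactly this amount, by Theorem~\ref{thm:2}.

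In the sparse branch, the removal is $\|\mathcal{P}_{\tilde\Omega}(A_r)\|^2_{\text{F}}\ge\|\mathcal{P}_{k(m+n)}(A_r)\|^2_{\text{F}}$. The subsequent projection step $A_r\gets A_r-(A_rW^T)W$ does not increase the norm, by the Pythagorean identity in the proof of Theorem~\ref{thm:2}. Without rectification I interpret this step as the plain projection onto the orthogonal complement of the existing rows, with $W$ left unchanged.

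The acceptance test compares per-cost ratios rather than raw reductions. However, each branch is individually bounded below by $(k/p)\|A_r\|^2_{\text{F}}$ via Proposition~\ref{prop:baseline}, so whichever branch is accepted, the reduction is at least $(k/p)\|A_r\|^2_{\text{F}}$. This is the main point to verify, and it holds branchwise without needing the max. The same induction and logarithm argument, with $1/p$ replaced by $k/p$, then gives the bound $\lceil(2p/k)\ln(\|A\|_{\text{F}}/\epsilon)\rceil$, using $k\le p$.
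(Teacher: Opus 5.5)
Your proposal is correct and follows the paper's proof essentially step for step. The paper also obtains a per-iteration contraction from Theorem~\ref{thm:1} (respectively Theorem~\ref{thm:2} together with $\Omega_r\subseteq\tilde\Omega$) and Proposition~\ref{prop:baseline}, bounds each branch separately so the ratio-based acceptance test does not matter, and then iterates using $(1-1/p)^t\le e^{-t/p}$. Your extra care goes slightly beyond the paper in two places, both of which the paper leaves implicit: the strict inequality at the ceiling (the $e^{-t/p}$ bound alone gives only $\le\epsilon^2$ when $2p\ln(\|A\|_{\text{F}}/\epsilon)$ is an integer), and the projection step that follows a sparse update.
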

		\begin{proof}
			By Theorem \ref{thm:1}, one iteration of Algorithm \ref{alg:1} removes
			from the squared residual norm at least the larger of $\sigma_1(A_r)^2$
			and $\|\mathcal{P}_{m+n}(A_r)\|^2_{\text{F}}$, which by Proposition \ref{prop:baseline}
			with $k=1$ is at least $\|A_r\|^2_{\text{F}}/p$. Iterating the resulting
			contraction and using $(1-1/p)^t\leq e^{-t/p}$ gives the claims. In
			Algorithm \ref{alg:rect} with exact projections, the sparse branch removes
			$\|\mathcal{P}_{\tilde\Omega}(A_r)\|^2_{\text{F}}$, which is at least
			$\|\mathcal{P}_{k(m+n)}(A_r)\|^2_{\text{F}}$,
			while, by Theorem \ref{thm:2}, the low-rank branch removes
			$\|A_r\tilde{W}^T\|^2_{\text{F}}=\sum_{i=1}^{k}\sigma_i(A_r)^2$;
			by Proposition \ref{prop:baseline}, each of these quantities is at least
			$(k/p)\|A_r\|^2_{\text{F}}$, so the contraction holds regardless of which
			branch is taken.
		\end{proof}
		
		The next example shows that, without further assumptions, the baseline of
		Proposition \ref{prop:baseline} is tight.
		
		\begin{example}[worst-case tightness]\label{ex:hadamard}
			Let $n$ be an order for which a Hadamard matrix $H\in\{-1,1\}^{n\times n}$
			exists (for instance, any power of two, by Sylvester's construction), so
			that $H H^T =nI$. Then $\|H\|^2_{\text{F}}=n^2$ and all $n$ singular values
			of $H$ equal $\sqrt{n}$. Consequently, the best rank-one update removes
			$\sigma_1(H)^2=n=\frac{1}{n}\|H\|^2_{\text{F}}$, while, since every entry
			of $H$ has magnitude one, removing any $m+n=2n$ entries reduces
			$\|H\|^2_{\text{F}}$ by exactly $2n=\frac{2}{n}\|H\|^2_{\text{F}}$.
			Hence the optimal value of the trust-region subproblem of Theorem
			\ref{thm:1} is $\|H\|^2_{\text{F}}-2n$: no feasible update removes more
			than a $2/n$ fraction of the squared norm, regardless of how the budget is
			split between the sparse and low-rank components.
		\end{example}
		
		Improvements over the baseline are therefore only possible for structured
		inputs. We identify two such structures, both relevant to our application.
		The first is entrywise nonnegativity, which dose-influence matrices satisfy.
		
		\begin{proposition}[nonnegative matrices]\label{prop:nonneg}
			Let $A\in\mathbb{R}^{m\times n}$ be nonzero and entrywise nonnegative.
			Then
			\[
			\max\big\{\sigma_1(A)^2,\ \|\mathcal{P}_{m+n}(A)\|^2_{\text{F}}\big\}
			\;\geq\; \frac{1}{\sqrt{n}}\,\|A\|^2_{\text{F}}.
			\]
			In particular, one iteration of Algorithm \ref{alg:1} applied to a
			nonnegative residual removes at least a $1/\sqrt{n}$ fraction of the
			squared residual norm.
		\end{proposition}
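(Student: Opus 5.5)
The plan is to bound $\|A\|_{\text{F}}^2$ by the geometric mean of the two candidate reductions, with a factor $\sqrt{n}$ appearing. Write $F=\|A\|_{\text{F}}^2$, $P=\|\mathcal{P}_{m+n}(A)\|_{\text{F}}^2$, and $\sigma_1=\sigma_1(A)$. For each row $i$, let $r_i=\sum_j a_{ij}$ be its row sum and $M_i=\max_j a_{ij}$ its largest entry. Both are nonnegative by assumption. The goal is to establish
\[
F\;\leq\;\sqrt{P}\,\sqrt{n\,\sigma_1^2}\;\leq\;\sqrt{n}\,\max\{\sigma_1^2,P\}.
\]
Dividing by $\sqrt{n}$ then gives the claimed inequality.

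\textbf{Step 1 (row-wise bound).} Because the entries are nonnegative, $a_{ij}^2\leq M_i a_{ij}$ for every $(i,j)$. Summing over $j$ gives $\sum_j a_{ij}^2\leq M_i r_i$. Summing over $i$ and applying Cauchy--Schwarz yields
\[
F\leq \sum_i M_i r_i\leq \Big(\sum_i M_i^2\Big)^{1/2}\Big(\sum_i r_i^2\Big)^{1/2}.
\]

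\textbf{Step 2 (sparse factor).} The row maxima $M_1,\dots,M_m$ sit at $m$ distinct positions, one in each row. Hence $\sum_i M_i^2$ is the sum of $m$ distinct squared entries. This sum is at most the sum of the $m$ largest squared entries, which is at most $\|\mathcal{P}_{m+n}(A)\|_{\text{F}}^2=P$ by the characterization in Lemma~\ref{lem3}. Therefore $\sum_i M_i^2\le P$.

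\textbf{Step 3 (low-rank factor).} Take the unit vector $v=\mathbf{1}/\sqrt{n}$. Then $\|Av\|_2^2=\frac1n\sum_i r_i^2$. By the variational characterization of the leading singular value, $\|Av\|_2^2\leq\sigma_1^2$, so $\sum_i r_i^2\leq n\sigma_1^2$. This step uses nonnegativity only through Step~1, so it holds as stated.

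\textbf{Conclusion.} Combining Steps 1--3 gives $F\leq\sqrt{P}\sqrt{n}\,\sigma_1\leq\sqrt{n}\max\{\sigma_1^2,P\}$, as claimed. The statement about Algorithm~\ref{alg:1} then follows from Theorem~\ref{thm:1}: one iteration removes exactly $\max\{\sigma_1(A_r)^2,\|\mathcal{P}_{m+n}(A_r)\|_{\text{F}}^2\}$, and we apply the inequality to the nonnegative residual $A_r$.

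The main obstacle is finding the right way to couple the two candidates. Neither candidate alone beats the $1/p$ baseline: a single dominant entry defeats the rank-one update's advantage, and a flat matrix defeats the sparse update's. The key is the pointwise bound $a_{ij}^2\le M_ia_{ij}$, which splits $F$ into a "peak" factor controlled by the sparse update and a "mass" factor controlled by the rank-one update. One could equally work with columns and $v=\mathbf{1}/\sqrt{m}$, which gives $\sqrt{m}$ instead of $\sqrt{n}$. Taking the better of the two orientations yields $\sqrt{p}$.
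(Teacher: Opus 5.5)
Your proof is correct and is essentially the paper's argument. Both use the normalized all-ones vector as the rank-one surrogate and one maximal entry per row as the sparse surrogate. Both then combine the row-wise Hölder bound $\|a_i\|_2^2\le\|a_i\|_1\|a_i\|_\infty$ (your $a_{ij}^2\le M_ia_{ij}$) with Cauchy--Schwarz and the fact that a geometric mean is at most the maximum.

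One small nit: Algorithm~\ref{alg:1}'s sparse step zeroes the residual on the whole accumulated support $\Omega$ (Remark~\ref{remark:1}). An iteration therefore removes \emph{at least}, not exactly, $\max\{\sigma_1(A_r)^2,\|\mathcal{P}_{m+n}(A_r)\|_{\text{F}}^2\}$, and "at least" is all the claim requires.
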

		\begin{proof}
			Let $a_i$ denote the $i$th row of $A$. We bound the two candidate
			reductions from below through two simple surrogates. For the low-rank
			candidate, take the unit vector $w=\frac{1}{\sqrt{n}}\mathbf{1}_n$; since
			the entries of $A$ are nonnegative, the $i$th entry of $Aw$ equals
			$\frac{1}{\sqrt{n}}\|a_i\|_1$, and therefore
			\[
			\sigma_1(A)^2 \;\geq\; \|Aw\|_2^2 \;=\; \frac{1}{n}\sum_{i=1}^{m}\|a_i\|_1^2 .
			\]
			For the sparse candidate, let $\Omega'$ collect the position of one
			largest entry in each row, so that $\operatorname{card}(\Omega')=m\leq m+n$;
			since $\mathcal{P}_{m+n}(A)$ maximizes $\|\mathcal{P}_{\Omega}(A)\|_{\text{F}}$
			over all index sets of cardinality at most $m+n$ (Lemma \ref{lem3}),
			\[
			\|\mathcal{P}_{m+n}(A)\|^2_{\text{F}} \;\geq\; \|\mathcal{P}_{\Omega'}(A)\|^2_{\text{F}}
			\;=\; \sum_{i=1}^{m}\|a_i\|_\infty^2 .
			\]
			Now, by H\"older's inequality applied to each row,
			$\|a_i\|_2^2\leq\|a_i\|_1\|a_i\|_\infty$, and by the Cauchy--Schwarz
			inequality,
			\[
			\|A\|^2_{\text{F}}
			= \sum_{i=1}^{m}\|a_i\|_2^2
			\leq \sum_{i=1}^{m}\|a_i\|_1\|a_i\|_\infty
			\leq \sqrt{n}\,
			\Big(\frac{1}{n}\sum_{i=1}^{m}\|a_i\|_1^2\Big)^{\!1/2}
			\Big(\sum_{i=1}^{m}\|a_i\|_\infty^2\Big)^{\!1/2}.
			\]
			Since the geometric mean of two nonnegative numbers never exceeds their
			maximum, the right-hand side is at most
			$\sqrt{n}\,\max\big\{\frac{1}{n}\sum_i\|a_i\|_1^2,\ \sum_i\|a_i\|_\infty^2\big\}$,
			and the claim follows from the two surrogate bounds above.
		\end{proof}
		
		\begin{remark}\label{rem:nonneg}
			Applying Proposition \ref{prop:nonneg} to $A^T$ shows that $\sqrt{n}$ may
			be replaced by $\sqrt{p}$. The guarantee applies to every iteration whose
			current residual is entrywise nonnegative, in particular the first, since
			subsequent residuals need not remain nonnegative. Even as a one-iteration
			statement it is meaningful: it shows that alternating between the two
			update types can be quadratically better than committing to either one.
		\end{remark}
		
		The second structure is the one motivating this paper: the input is (close
		to) the sum of a sparse and a low-rank matrix. The next theorem shows that,
		under such an assumption, the per-iteration reduction improves far beyond the
		baseline, provided the two components do not mask each other. The masking is
		quantified by the fraction $\beta$ of the low-rank component's energy that is
		hidden on the support of the sparse component; small $\beta$ thus serves as
		our measure of incoherence between the two components.
		
		\begin{theorem}[sparse-plus-low-rank matrices]\label{thm:structured}
			Let $A=S+L$ with $\Omega=\operatorname{supp}(S)$, $\nu=\|S\|_0$,
			$\rho=\operatorname{rank}(L)$, and
			$\beta=\|\mathcal{P}_\Omega(L)\|^2_{\text{F}}/\|L\|^2_{\text{F}}$.
			Assume that
			\[
			1\leq k\leq \min\Big\{\frac{\nu}{m+n},\,\rho\Big\}
			\qquad\text{and}\qquad
			\beta<\frac{k}{\rho},
			\]
			and define
			\begin{equation}\label{eq:alpha-star}
				\alpha^* \;=\; \frac{\nu}{m+n}\Bigg[\,1+(1-\beta)
				\Bigg(\frac{1+\sqrt{k(m+n)/\nu}}{\sqrt{k/\rho}-\sqrt{\beta}}\Bigg)^{\!2}\,\Bigg].
			\end{equation}
			Then
			\[
			\max\Big\{\|\mathcal{P}_{k(m+n)}(A)\|^2_{\text{F}},\ \sum_{i=1}^{k}\sigma_i(A)^2\Big\}
			\;\geq\; \frac{k}{\alpha^*}\,\|A\|^2_{\text{F}},
			\]
			that is, at least one of the two exact candidate updates removes a
			$k/\alpha^*$ fraction of the squared norm.
		\end{theorem}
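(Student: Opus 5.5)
The plan is to bound $\|A\|^2_{\text{F}}$ from above by a multiple of $M:=\max\{X,Y\}$, where $X=\|\mathcal{P}_{k(m+n)}(A)\|^2_{\text{F}}$ and $Y=\sum_{i=1}^k\sigma_i(A)^2$. I would split $A$ along the support $\Omega$ of $S$. Since $A$ agrees with $L$ off $\Omega$, \eqref{Fro1} gives
\[
\|A\|^2_{\text{F}}=\|\mathcal{P}_\Omega(A)\|^2_{\text{F}}+\|\mathcal{P}_{\Omega^c}(L)\|^2_{\text{F}}=\|\mathcal{P}_\Omega(A)\|^2_{\text{F}}+(1-\beta)\|L\|^2_{\text{F}} .
\]
It then suffices to bound each of the two terms by $M$ times an explicit constant.

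\textbf{Sparse part.} The matrix $\mathcal{P}_\Omega(A)$ has at most $\nu$ nonzero entries, and $k(m+n)\le\nu$. Its $k(m+n)$ largest-magnitude entries therefore carry at least a $k(m+n)/\nu$ fraction of its squared norm; this is the same averaging argument as in Proposition~\ref{prop:baseline}. Those entries form an admissible support for $\mathcal{P}_{k(m+n)}(A)$, so Lemma~\ref{lem3} gives
\[
\|\mathcal{P}_\Omega(A)\|^2_{\text{F}}\le \frac{\nu}{k(m+n)}\,X .
\]

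\textbf{Low-rank part.} This is the step that brings in $\beta$ and needs the most care. Let $V\in\mathbb{R}^{n\times k}$ collect the top $k$ right singular vectors of $L$. By Lemma~\ref{lem1} and Theorem~\ref{thm:2} (or Ky Fan), $\sqrt{Y}\ge\|AV\|_{\text{F}}$. Proposition~\ref{prop:baseline} applied to $L$, which has only $\rho$ nonzero singular values, gives $\|LV\|_{\text{F}}\ge\sqrt{k/\rho}\,\|L\|_{\text{F}}$. The triangle inequality, together with $\|SV\|_{\text{F}}\le\|S\|_{\text{F}}$ (because $V$ has orthonormal columns), yields $\sqrt Y\ge\sqrt{k/\rho}\,\|L\|_{\text{F}}-\|S\|_{\text{F}}$. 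To control $\|S\|_{\text{F}}$, I would write $S=\mathcal{P}_\Omega(A)-\mathcal{P}_\Omega(L)$ and use the sparse bound:
\[
\|S\|_{\text{F}}\le\sqrt{\tfrac{\nu}{k(m+n)}}\sqrt X+\sqrt\beta\,\|L\|_{\text{F}} .
\]
Combining the two displays and using the hypothesis $\beta<k/\rho$ to divide by the positive quantity $\sqrt{k/\rho}-\sqrt\beta$, I obtain
\[
\|L\|_{\text{F}}\le\frac{1+\sqrt{\nu/(k(m+n))}}{\sqrt{k/\rho}-\sqrt\beta}\sqrt M=\sqrt{\tfrac{\nu}{k(m+n)}}\;\frac{1+\sqrt{k(m+n)/\nu}}{\sqrt{k/\rho}-\sqrt\beta}\sqrt M .
\]

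\textbf{Conclusion.} Substituting both bounds into the splitting identity gives
\[
\|A\|^2_{\text{F}}\le\frac{\nu}{k(m+n)}\Big[1+(1-\beta)\Big(\frac{1+\sqrt{k(m+n)/\nu}}{\sqrt{k/\rho}-\sqrt\beta}\Big)^2\Big]M=\frac{\alpha^*}{k}\,M ,
\]
which is the claim. The main obstacle is the low-rank step. The natural trick is to route $\|S\|_{\text{F}}$ back through $\mathcal{P}_\Omega(A)$, so that the cross term is controlled by $X$ plus the masked energy $\sqrt\beta\|L\|_{\text{F}}$; doing so is exactly what produces the denominator $\sqrt{k/\rho}-\sqrt\beta$. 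The remaining points are routine bookkeeping: checking that $k\le\rho$ is needed for the choice of $V$, and that $k(m+n)\le\nu$ is needed for the averaging argument.
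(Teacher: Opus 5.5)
Your proof is correct and is essentially the paper's argument. It uses the same splitting $\|A\|_{\text{F}}^2=\|\mathcal{P}_\Omega(A)\|_{\text{F}}^2+(1-\beta)\|L\|_{\text{F}}^2$, the same averaging bound on $\Omega$, the same test subspace spanned by the top-$k$ right singular vectors of $L$, and the same triangle-inequality chain routing $S$ through $\mathcal{P}_\Omega(A)$ and $\mathcal{P}_\Omega(L)$. The paper phrases it as a contradiction, assuming both candidate reductions fall below the $k/\alpha^*$ threshold, whereas you run the inequalities forward to get $\|A\|_{\text{F}}^2\le(\alpha^*/k)\max\{X,Y\}$ directly, which is slightly cleaner because it avoids the sign check before squaring.
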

		\begin{proof}
			Write $\alpha=\alpha^*$ and note that $\alpha>\nu/(m+n)$, so
			$\alpha(m+n)>\nu$. Note also that $\rho\geq k\geq1$ implies $L\neq0$ and
			$\nu\geq k(m+n)>0$ implies $S\neq0$. Suppose first that
			\begin{equation}\label{eq:sparse-case}
				\|\mathcal{P}_{\Omega}(A)\|^2_{\text{F}} \;\geq\; \frac{\nu}{\alpha(m+n)}\,\|A\|^2_{\text{F}}.
			\end{equation}
			Since $k(m+n)\leq\nu=\operatorname{card}(\Omega)$, the $k(m+n)$
			largest-magnitude entries of $A$ within $\Omega$ carry at least the
			fraction $k(m+n)/\nu$ of $\|\mathcal{P}_{\Omega}(A)\|^2_{\text{F}}$, and
			hence
			\[
			\|\mathcal{P}_{k(m+n)}(A)\|^2_{\text{F}}
			\;\geq\; \frac{k(m+n)}{\nu}\,\|\mathcal{P}_{\Omega}(A)\|^2_{\text{F}}
			\;\geq\; \frac{k}{\alpha}\,\|A\|^2_{\text{F}},
			\]
			which proves the claim in this case. Next, let $v_1,\ldots,v_k$ be right
			singular vectors of $L$ associated with its $k$ largest singular values,
			and set $\tilde{V}=[v_1~\cdots~v_k]$. If
			\begin{equation}\label{eq:lowrank-case}
				\|A\tilde{V}\|^2_{\text{F}} \;\geq\; \frac{k}{\alpha}\,\|A\|^2_{\text{F}},
			\end{equation}
			then the variational characterization of singular values
			\cite{hogben2013handbook} gives
			\[
			\sum_{i=1}^{k}\sigma_i(A)^2=\max_{V^TV=I_k}\|AV\|^2_{\text{F}}\geq\|A\tilde{V}\|^2_{\text{F}},
			\]
			and the claim again follows.
			
			It remains to show that \eqref{eq:sparse-case} and \eqref{eq:lowrank-case}
			cannot both fail. Suppose they do. Since $L$ has exactly $\rho$ nonzero
			singular values and $\tilde{V}$ spans its $k$ leading right singular
			directions,
			\[
			\|L\tilde{V}\|^2_{\text{F}} \;=\; \sum_{i=1}^{k}\sigma_i(L)^2 \;\geq\; \frac{k}{\rho}\,\|L\|^2_{\text{F}}.
			\]
			Because $A-L=S$ vanishes outside $\Omega$ and $\tilde{V}$ has orthonormal
			columns,
			\[
			\begin{aligned}
				\|\mathcal{P}_{\Omega}(A)\|_{\text{F}}+\|\mathcal{P}_{\Omega}(L)\|_{\text{F}}
				&\;\geq\; \|\mathcal{P}_{\Omega}(A-L)\|_{\text{F}}
				\;=\; \|A-L\|_{\text{F}} \\
				&\;\geq\; \|(A-L)\tilde{V}\|_{\text{F}}
				\;\geq\; \|L\tilde{V}\|_{\text{F}}-\|A\tilde{V}\|_{\text{F}}.
			\end{aligned}
			\]
			Substituting the failures of \eqref{eq:sparse-case} and
			\eqref{eq:lowrank-case} together with
			$\|\mathcal{P}_{\Omega}(L)\|_{\text{F}}=\sqrt{\beta}\,\|L\|_{\text{F}}$
			and $\|L\tilde{V}\|_{\text{F}}\geq\sqrt{k/\rho}\,\|L\|_{\text{F}}$ yields
			\[
			\sqrt{\beta}\,\|L\|_{\text{F}}+\sqrt{\frac{\nu}{\alpha(m+n)}}\,\|A\|_{\text{F}}
			\;>\; \sqrt{\frac{k}{\rho}}\,\|L\|_{\text{F}}-\sqrt{\frac{k}{\alpha}}\,\|A\|_{\text{F}},
			\]
			which rearranges to
			\begin{equation}\label{eq:combined}
				\frac{1}{\sqrt{\alpha}}\Big(\sqrt{k}+\sqrt{\frac{\nu}{m+n}}\Big)\|A\|_{\text{F}}
				\;>\; \Big(\sqrt{\frac{k}{\rho}}-\sqrt{\beta}\Big)\|L\|_{\text{F}}.
			\end{equation}
			By the assumption $\beta<k/\rho$, both sides of \eqref{eq:combined} are
			nonnegative, so the inequality may be squared. Moreover, the failure of
			\eqref{eq:sparse-case}, together with
			$\|A\|^2_{\text{F}}=\|\mathcal{P}_{\Omega}(A)\|^2_{\text{F}}+\|\mathcal{P}_{\Omega^\complement}(A)\|^2_{\text{F}}$
			and the identity
			$\mathcal{P}_{\Omega^\complement}(A)=\mathcal{P}_{\Omega^\complement}(L)$,
			implies
			\[
			\Big(1-\frac{\nu}{\alpha(m+n)}\Big)\|A\|^2_{\text{F}}
			\;<\; \|\mathcal{P}_{\Omega^\complement}(L)\|^2_{\text{F}}
			\;=\; (1-\beta)\,\|L\|^2_{\text{F}}.
			\]
			Combining the square of \eqref{eq:combined} with this bound and dividing
			by $\|L\|^2_{\text{F}}>0$ gives
			\[
			\Big(\sqrt{\frac{k}{\rho}}-\sqrt{\beta}\Big)^{\!2}
			\;<\; \frac{(1-\beta)(m+n)}{\alpha(m+n)-\nu}\Big(\sqrt{k}+\sqrt{\frac{\nu}{m+n}}\Big)^{\!2},
			\]
			which rearranges to
			\[
			\alpha \;<\; \frac{\nu}{m+n}+(1-\beta)\,
			\frac{\big(\sqrt{k}+\sqrt{\nu/(m+n)}\big)^2}{\big(\sqrt{k/\rho}-\sqrt{\beta}\big)^2}
			\;=\; \alpha^*,
			\]
			where the last equality uses
			$\big(\sqrt{k}+\sqrt{\nu/(m+n)}\big)^2=\frac{\nu}{m+n}\big(1+\sqrt{k(m+n)/\nu}\big)^2$.
			This contradicts $\alpha=\alpha^*$ and completes the proof.
		\end{proof}
		
		\begin{corollary}\label{cor:simplified}
			Under the assumptions of Theorem \ref{thm:structured},
			\[
			\alpha^* \;\leq\; \bar{\alpha} \;:=\; \frac{\nu}{m+n}
			\Big(1+\frac{4\rho}{(\sqrt{k}-\sqrt{\beta\rho})^2}\Big),
			\]
			so at least one of the two exact candidate updates removes a
			$k/\bar{\alpha}$ fraction of the squared norm.
		\end{corollary}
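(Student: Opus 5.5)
We need to show $\alpha^*\le\bar\alpha$, i.e.
\[
(1-\beta)\Big(\tfrac{1+\sqrt{k(m+n)/\nu}}{\sqrt{k/\rho}-\sqrt\beta}\Big)^2 \le \tfrac{4\rho}{(\sqrt k-\sqrt{\beta\rho})^2}.
\]
The plan is to bound the two factors of the left-hand side separately and then invoke Theorem \ref{thm:structured} with $\alpha^*$ replaced by the larger quantity $\bar\alpha$.

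\textbf{Numerator.} The first hypothesis of Theorem \ref{thm:structured} gives $k\le \nu/(m+n)$, hence $k(m+n)/\nu\le 1$. Therefore $1+\sqrt{k(m+n)/\nu}\le 2$, and squaring bounds the numerator by $4$. We also drop the factor $(1-\beta)\le 1$, which is valid because $0\le\beta\le 1$ by definition of $\beta$ as an energy fraction.

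\textbf{Denominator.} Multiply numerator and denominator by $\sqrt\rho$:
\[
\sqrt{k/\rho}-\sqrt\beta=(\sqrt k-\sqrt{\beta\rho})/\sqrt\rho.
\]
Its square is therefore $(\sqrt k-\sqrt{\beta\rho})^2/\rho$. The assumption $\beta<k/\rho$ guarantees this quantity is strictly positive, so dividing by it is legitimate and preserves the direction of the inequalities.

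\textbf{Combining.} Together these give
\[
(1-\beta)(\cdots)^2\le 4\rho/(\sqrt k-\sqrt{\beta\rho})^2,
\]
and multiplying by $\nu/(m+n)>0$ and adding $\nu/(m+n)$ to both sides yields $\alpha^*\le\bar\alpha$.

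\textbf{Final claim.} Since $\alpha^*\le\bar\alpha$, we have $k/\bar\alpha\le k/\alpha^*$. The lower bound $\tfrac{k}{\alpha^*}\|A\|_{\text{F}}^2$ from Theorem \ref{thm:structured} therefore implies the weaker bound $\tfrac{k}{\bar\alpha}\|A\|_{\text{F}}^2$ on the larger of the two candidate reductions.

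The argument is routine. The only point needing care is the sign and positivity check in the denominator step, which is exactly where the hypothesis $\beta<k/\rho$ enters.
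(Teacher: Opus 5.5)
Your proposal is correct and follows essentially the same route as the paper's proof: you bound $\bigl(1+\sqrt{k(m+n)/\nu}\bigr)^2\le 4$ using $k(m+n)\le\nu$, drop $1-\beta\le 1$, and rewrite $\bigl(\sqrt{k/\rho}-\sqrt{\beta}\bigr)^2=(\sqrt{k}-\sqrt{\beta\rho})^2/\rho$ in the definition of $\alpha^*$. Your explicit positivity check via $\beta<k/\rho$ and the final monotonicity step $k/\bar{\alpha}\le k/\alpha^*$ simply spell out what the paper leaves implicit.
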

		\begin{proof}
			Since $k(m+n)\leq\nu$, we have $\big(1+\sqrt{k(m+n)/\nu}\big)^2\leq4$;
			combining this with $1-\beta\leq1$ and
			$\big(\sqrt{k/\rho}-\sqrt{\beta}\big)^2=(\sqrt{k}-\sqrt{\beta\rho})^2/\rho$
			in \eqref{eq:alpha-star} gives the bound.
		\end{proof}
		
		Theorem \ref{thm:structured} and Corollary \ref{cor:simplified} warrant several remarks, particularly regarding the mildness of their assumptions and the magnitude of the resulting improvement over the worst-case baseline.
		
		Consider the assumptions first. The batch size upper bound, $k \leq \min\{\frac{\nu}{m+n}, \rho\}$, simply ensures that the trust-region budget does not exceed the actual intrinsic rank or normalized sparsity of the latent components. If $\nu<m+n$, the entire sparse component
		already fits within one sparse trust-region update, so excluding this case does
		not represent a practically difficult regime. The substantive assumption is $\beta<k/\rho$,
		which requires that the sparse support not hide too much of the low-rank
		component's energy. Some such incoherence condition is unavoidable, a
		sparse component supported on the dominant entries of $L$ can mask it
		entirely, and analogous conditions underpin the exact-recovery theory of
		sparse-plus-low-rank decompositions~\cite{Candes2009-fm,Chandrasekaran2011-vc}.
		As stated, however, the requirement is weak: if the support of $S$ were
		placed uniformly at random, then $\mathbb{E}[\beta]=\nu/(mn)$, the density
		of the sparse component itself, which for $k=\nu/(m+n)$ lies below the
		threshold $k/\rho$ by a factor of $mn/\big(\rho(m+n)\big)$, for square
		matrices $n/(2\rho)$, which is large precisely when $L$ is genuinely low
		rank. In other words, the assumption excludes only components that are
		adversarially aligned, not typical ones.

		Next, consider the comparison with the baseline. The theoretical guarantee improves upon the $\mathcal{O}(k/p)$ baseline of Proposition \ref{prop:baseline} whenever $\alpha^* < p$, a condition that is overwhelmingly likely to hold in our targeted setting. Because the residual is assumed to be the sum of genuinely sparse and low-rank components, $\nu/(m+n)$ and $\rho$ are significantly smaller than the ambient dimension $p = \min\{m,n\}$. Consequently, $\bar{\alpha}$ (and thus $\alpha^*$) is bounded by these small intrinsic dimensions rather than $p$. This allows the per-iteration reduction $k/\alpha^*$ to completely escape the $\mathcal{O}(1/p)$ dimensional bottleneck. The improvement can be dramatic: if the batch size $k$ is chosen to match the intrinsic structure such that $k \approx \nu/(m+n) \approx \rho$, we have $1+\sqrt{k(m+n)/\nu} \approx 2$ and $\sqrt{k/\rho} \approx 1$, so \eqref{eq:alpha-star} yields $\alpha^* \approx k\big[1+4(1-\beta)/(1-\sqrt{\beta})^2\big] = k(5+3\sqrt{\beta})/(1-\sqrt{\beta})$, where the equality uses $1-\beta=(1-\sqrt{\beta})(1+\sqrt{\beta})$. The guaranteed reduction fraction then simplifies to $k/\alpha^* \approx (1-\sqrt{\beta})/(5+3\sqrt{\beta})$. This is a dimension-independent constant; for highly incoherent components ($\beta \approx 0$), the algorithm removes at least one-fifth of the squared residual norm in a single iteration.

		\begin{remark}[Inexact and randomized projections]\label{rem:randomized}
			The preceding guarantees extend directly to inexact projection subroutines
			whenever their captured energies can be compared with those of the corresponding
			exact projections. Specifically, suppose that, at a given iteration,
			
			\[
			\widehat{\Delta}_S
			\geq
			\tau_S
			\|\mathcal{P}_{k(m+n)}(A_r)\|_{\text{F}}^2,~	\widehat{\Delta}_L
			\geq
			\tau_L
			\sum_{i=1}^{k}\sigma_i(A_r)^2,
			\]
			for some $\tau_S,\tau_L\in(0,1]$, where $\widehat{\Delta}_S$ and
			$\widehat{\Delta}_L$ denote the reductions in squared residual norm produced
			by the inexact sparse and low-rank projections. If the algorithm selects the
			candidate with the larger realized reduction, then its decrease is at least
			
			\[
			\tau
			\max\left\{
			\|\mathcal{P}_{k(m+n)}(A_r)\|_{\text{F}}^2,\,
			\sum_{i=1}^{k}\sigma_i(A_r)^2
			\right\},
			\qquad
			\tau:=\min\{\tau_S,\tau_L\}.
			\]
			
			Consequently, every deterministic contraction bound in this subsection remains
			valid with its guaranteed fractional decrease multiplied by $\tau$. In
			particular, if the above inequalities hold uniformly over the iterations, then
			
			\[
			\|A_r^{(t)}\|_{\text{F}}^2
			\leq
			\left(1-\tau\frac{k}{p}\right)^t
			\|A\|_{\text{F}}^2.
			\]
			
			For randomized projection methods, the values of $\tau_S$ and $\tau_L$, as
			well as the probability with which these inequalities hold, depend on the
			specific sampling scheme, oversampling parameter, number of power iterations,
			and spectral properties of the residual. Standard RSVD error bounds may be
			used to derive a probabilistic value of $\tau_L$ for a specified implementation,
			but they do not imply a universal fixed value of $\tau_L$ without additional
			assumptions. Likewise, a corresponding guarantee for the quantile-based sparse
			projection requires an explicit analysis of the sampling procedure. A
			high-probability convergence guarantee over multiple iterations additionally
			requires controlling the accumulated failure probability across iterations.
		\end{remark}

		\section{Numerical experiments}\label{sec:numerics}
		
		Our numerical study is organized into two parts. We first use real-world data
		from radiotherapy treatment planning to contrast the proposed matrix embedding
		framework with matrix recovery methods, and to compare the exact and
		randomized variants of our trust-region algorithm, R-Trust and R3-Trust. We
		then assess R3-Trust as a matrix recovery method on synthetic data. In both
		parts, we benchmark the proposed algorithms against ADMM (a convex
		optimization baseline), GoDec (a nonconvex optimization baseline), and VB (a
		nonoptimization baseline), reviewed in Section \ref{sec:background}. All code and
		data are publicly available in the \href{https://github.com/Radiotherapy-Optimization/SLME-matrix-embedding}{SLME-matrix-embedding}
		GitHub repository. All experiments were performed in MATLAB R2023a on a
		workstation equipped with an Intel Xeon Gold 5420+ processor and 256\,GB of
		RAM.
		
		\subsection{Matrix embedding versus matrix recovery: real-world data}
		
		In this subsection we consider the dose-influence matrix $A$, the large dense
		operator that constitutes the computational bottleneck in solving the
		large-scale radiotherapy optimization problem \eqref{RO-Problem}. We use the
		data of a lung patient ($308{,}290$ rows, $6{,}752$ columns) and the larger
		data of a prostate patient ($580{,}312$ rows, $6{,}570$ columns), both
		obtained from the open-source PortPy package~\cite{jhanwar2023portpy}. Since
		matrix embedding seeks decompositions with small error and low representation
		cost, we compare all methods on the trade-off between the approximation error
		$\|A-(HW+S)\|_{\text{F}}/\|A\|_{\text{F}}$ and the cost of the decomposition, measured by its
		number of nonzeros $\operatorname{nnz}(H,W,S)$.
		
		\textbf{Embedding is fast, parameter-free, and Pareto-optimal.}
		\Cref{fig:parameter_free} compares the embedding and recovery paradigms on
		the lung case, and three observations stand out. First, matrix embedding is
		parameter-free: our algorithm takes only the matrix $A$ and a target error,
		here $2\%$, and returns the entire Pareto surface of decompositions in a
		single run (the algorithm reduces the error incrementally while adding
		nonzeros, so the surface is generated from left to right). In contrast, each
		recovery method produces a single decomposition for a given choice of its
		hyper-parameters, so its trade-off curve must be traced by sweeping those
		hyper-parameters, which entails repeated solves and careful tuning:
		generating the points in \cref{fig:parameter_free} required $30$ solves for
		GoDec, $12$ for VB, and $20$ for ADMM, although fewer points appear in the
		plots because some solves produced errors and/or nnz values outside the
		plotted axis ranges.\footnote{GoDec was run over a grid of six rank upper
			bounds ($15$ to $40$ in steps of $5$) and five cardinality budgets ($0.5\%$
			to $1.5\%$ of $mn$ in steps of $0.25\%$); VB over twelve values of its
			noise-precision parameter $\beta$ ($5$ to $60$ times a data-driven baseline,
			the inverse mean squared entry of $A$); and ADMM over a grid of five sparse
			penalties $\lambda_S\in\{0.01,0.05,0.1,0.5,1\}$ and four low-rank penalties
			$\lambda_L\in\{10,50,100,500\}$.}
		
		Second, the embedding decompositions are Pareto-optimal relative to the
		recovery baselines: across the full range of budgets, the points generated by
		our algorithm attain a smaller error for the same number of nonzeros,
		equivalently, fewer nonzeros for the same error, than those generated by
		GoDec (top-left), VB (top-right), and ADMM (bottom-left).
		
		Third, matrix embedding is markedly more efficient. Recovering the whole
		trade-off curve requires a single run of the embedding algorithm, whereas the
		recovery methods must be rerun for every hyper-parameter setting. Moreover,
		even one embedding solve is faster than a single recovery solve: as reported
		in the table in \cref{fig:parameter_free}, for the lung data, R3-Trust
		computes the entire surface in about $250$ seconds, whereas GoDec, VB, and
		ADMM require roughly $8{,}400$, $22{,}600$, and $1{,}600$ seconds,
		respectively, to produce a single point. The per-point times are averages
		over the solves in the corresponding sweep, since the runtimes varied
		slightly across hyper-parameter settings; all three baselines were run with
		their default termination criteria. For the larger prostate data, R3-Trust
		computes the entire surface in about $10$ minutes, while all three recovery
		methods ran out of memory on our 256-GB system.
		
		\begin{figure}
			\centering
			\begin{minipage}[b]{0.43\linewidth}
				\centering
				\includegraphics[width=\linewidth]{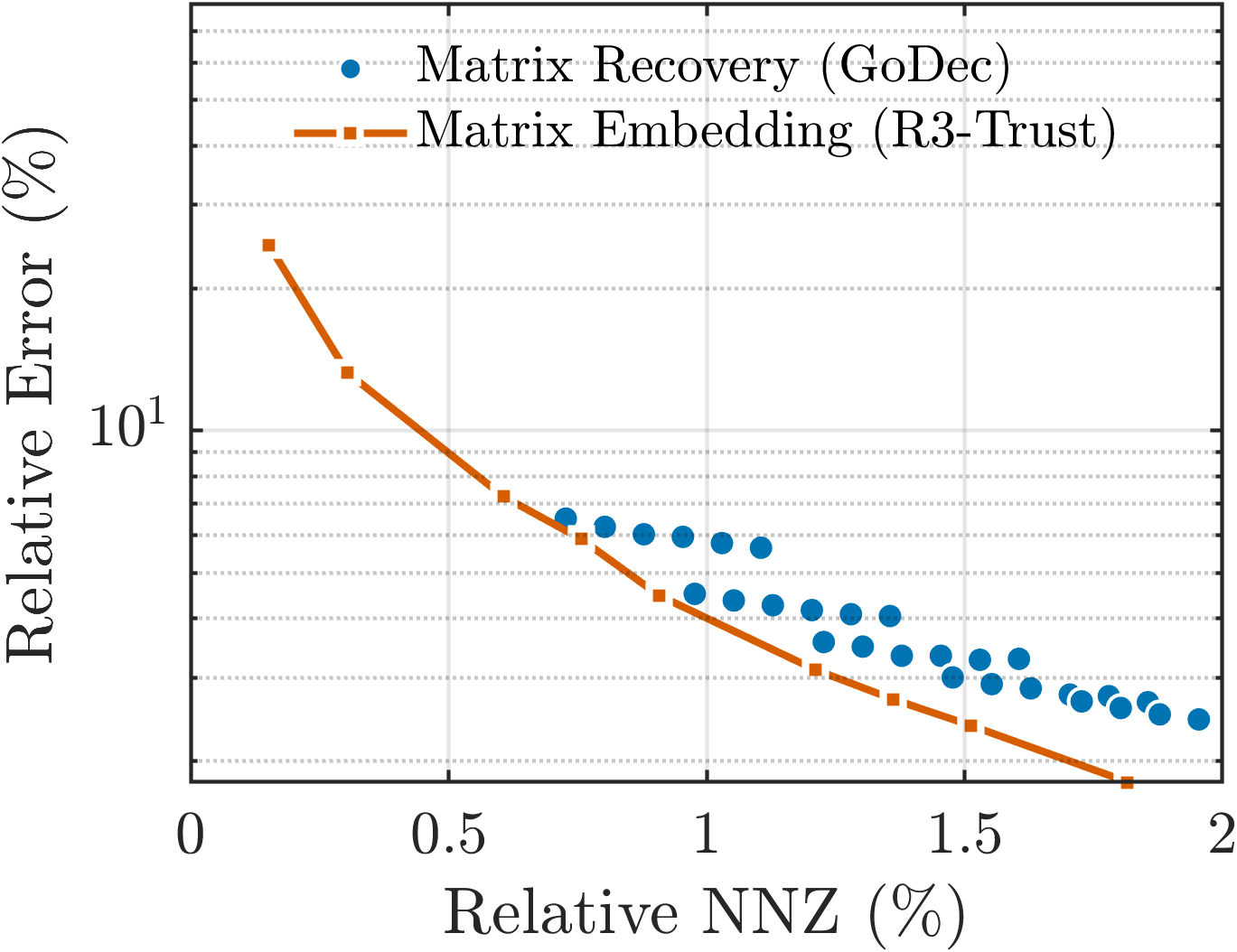}
			\end{minipage}\hfill
			\begin{minipage}[b]{0.43\linewidth}
				\centering
				\includegraphics[width=\linewidth]{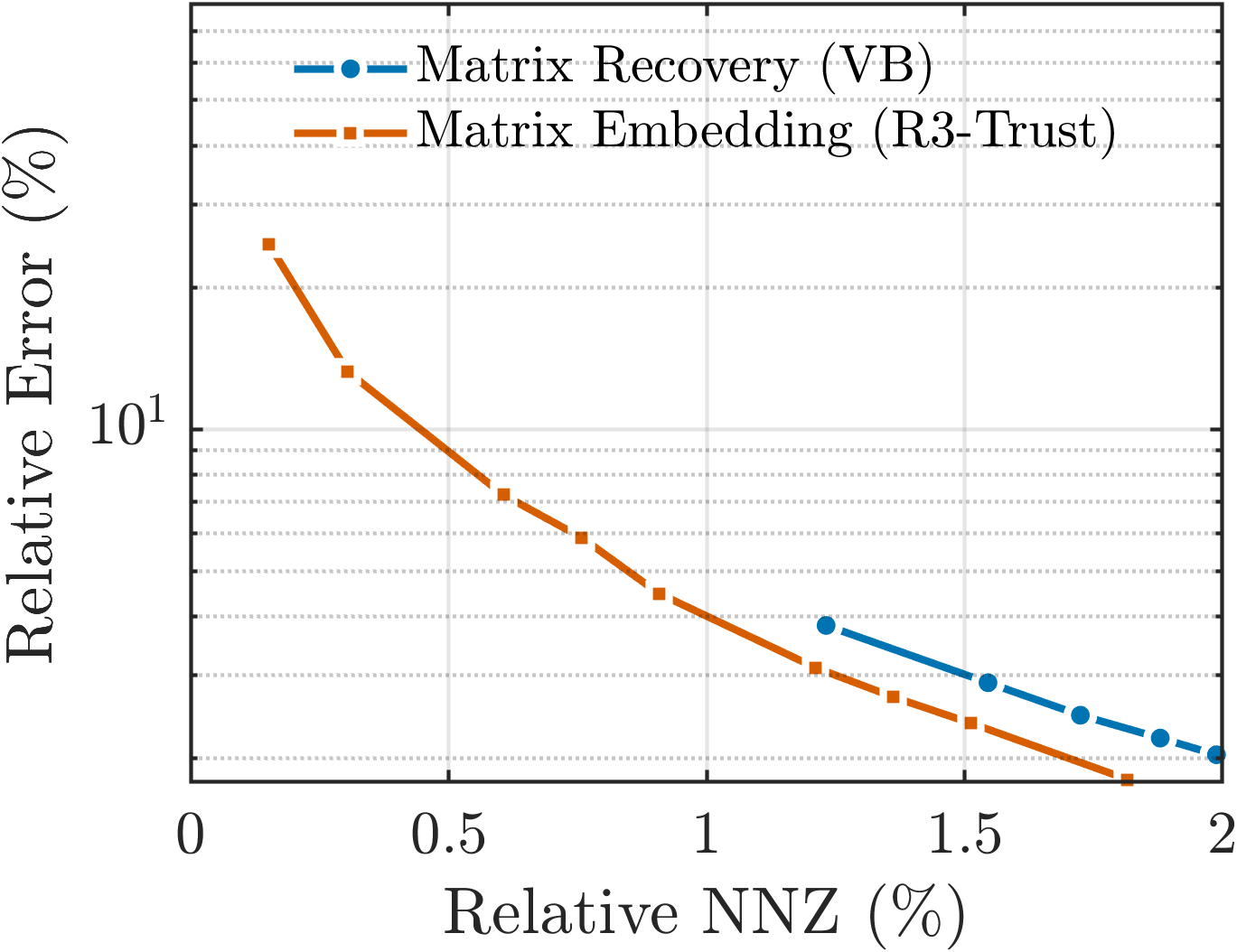}
			\end{minipage}
			
			\vspace{1em}
			
			\begin{minipage}[c]{0.43\linewidth}
				\centering
				\includegraphics[width=\linewidth]{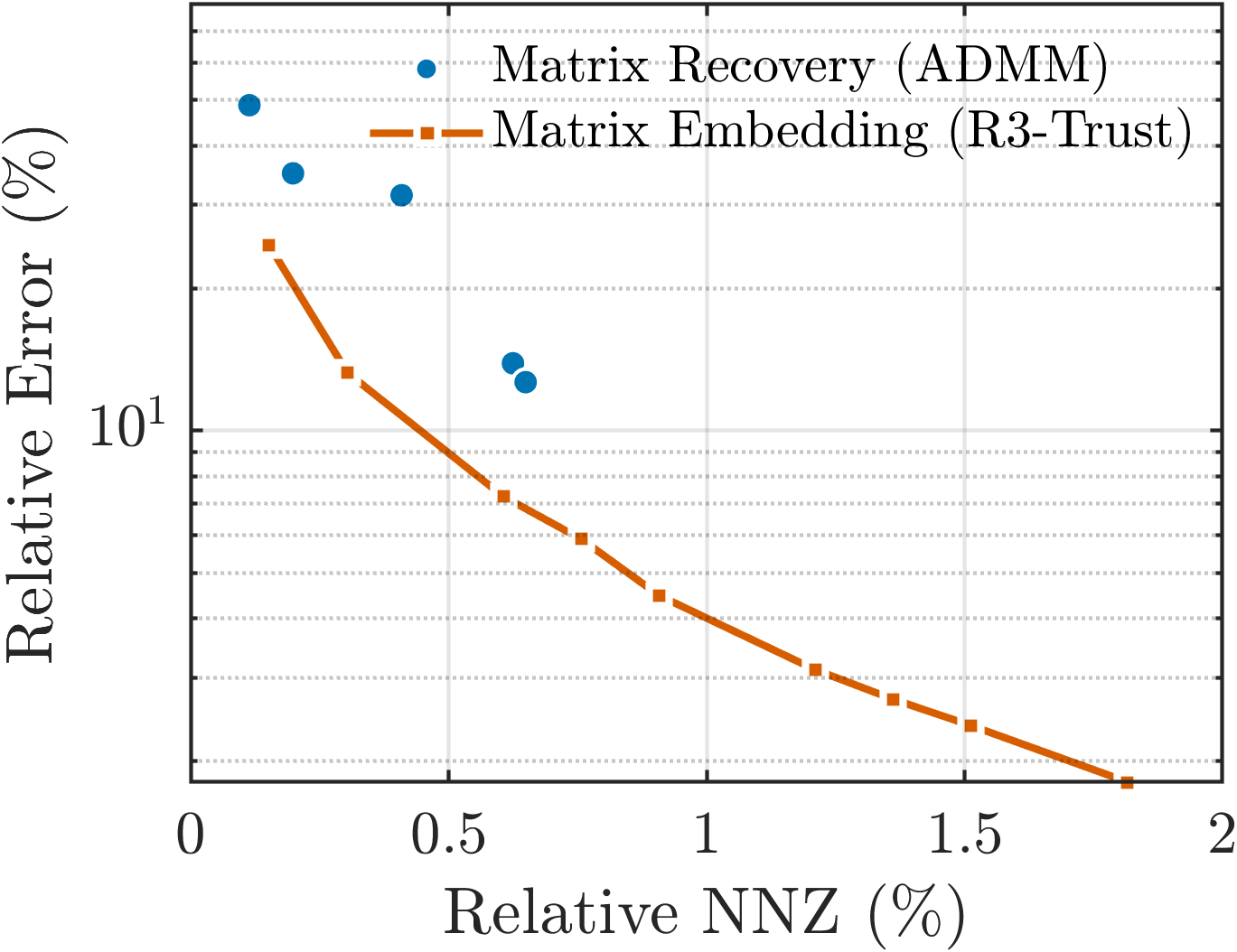}
			\end{minipage}\hfill
			\begin{minipage}[c]{0.48\linewidth}
				\centering
				\small
				\setlength{\tabcolsep}{4pt}
				\begin{tabular}{lcc}
					\hline
					& \multicolumn{2}{c}{Runtime (s)} \\
					\cline{2-3}
					Method                  & Lung   & Prostate \\
					\hline
					R3-Trust (full surface) & $248$  & $608$    \\
					GoDec (per point)       & $8,356$ & OOM      \\
					VB (per point)          & $22,575$ & OOM      \\
					ADMM (per point)        & $1,623$ & OOM      \\
					\hline
				\end{tabular}
			\end{minipage}
			
			\caption{Matrix embedding versus matrix recovery on the lung
				dose-influence matrix. Each panel plots the relative approximation
				error (log scale) against the number of nonzeros of the decomposition
				relative to $mn$. In a single parameter-free run, R3-Trust generates
				the entire error--nnz Pareto surface (solid curve), which dominates
				the individual decompositions produced by sweeping the
				hyper-parameters of GoDec (top-left), VB (top-right), and ADMM
				(bottom-left). The table (bottom-right) reports wall-clock runtimes
				for the lung and prostate cases: one full-surface run of R3-Trust
				versus the average per-point solve time of each recovery method. On
				the prostate case, all three recovery methods ran out of memory
				(OOM).}
			\label{fig:parameter_free}
		\end{figure}

		\textbf{R-Trust versus R3-Trust.} \Cref{fig:rtrust_vs_r3trust} compares the exact
		R-Trust algorithm with its randomized counterpart R3-Trust on the same lung
		data, in terms of quality (left panel) and speed (right panel). In principle,
		the randomization and the batch size larger than one ($10$ in our
		experiments) introduced by R3-Trust could degrade the quality of the
		decomposition, while its rectification step could improve it.
		\Cref{fig:rtrust_vs_r3trust} (left) shows that the net effect is minimal: the
		two algorithms trace nearly identical Pareto surfaces in the error--nnz
		plane. Computationally, however, R3-Trust is roughly an order of magnitude
		faster than R-Trust (right panel), since it avoids the exact rank-one SVD and
		the full sort of the residual entries, and expands the trust region by using
		a larger batch at each iteration. The prostate case exhibits the same trend
		(\cref{fig:rtrust_vs_r3trust_prostate} in the appendix).
		
		\begin{figure}
			\centering
			\includegraphics[width=.42\linewidth]{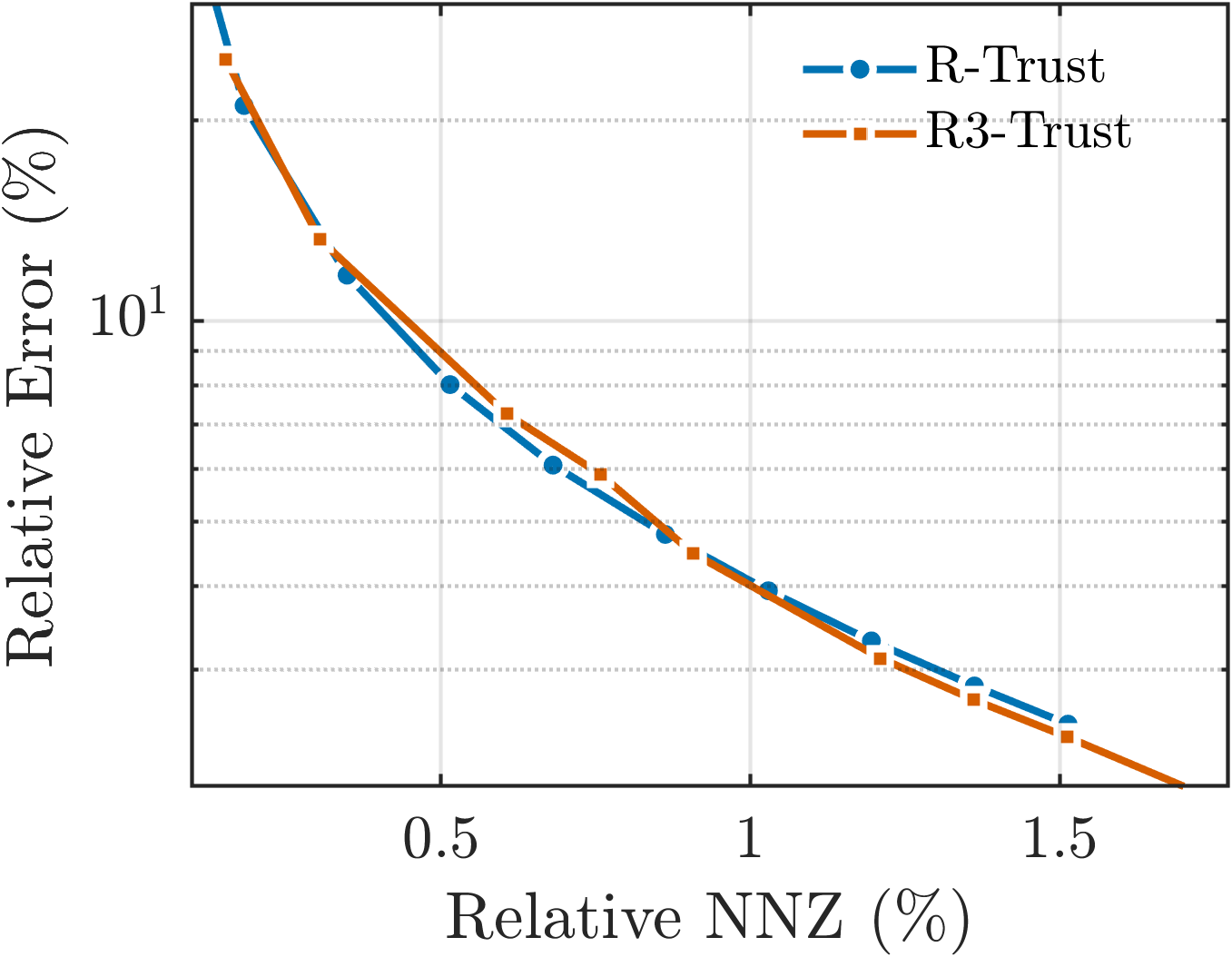}
			\includegraphics[width=.43\linewidth]{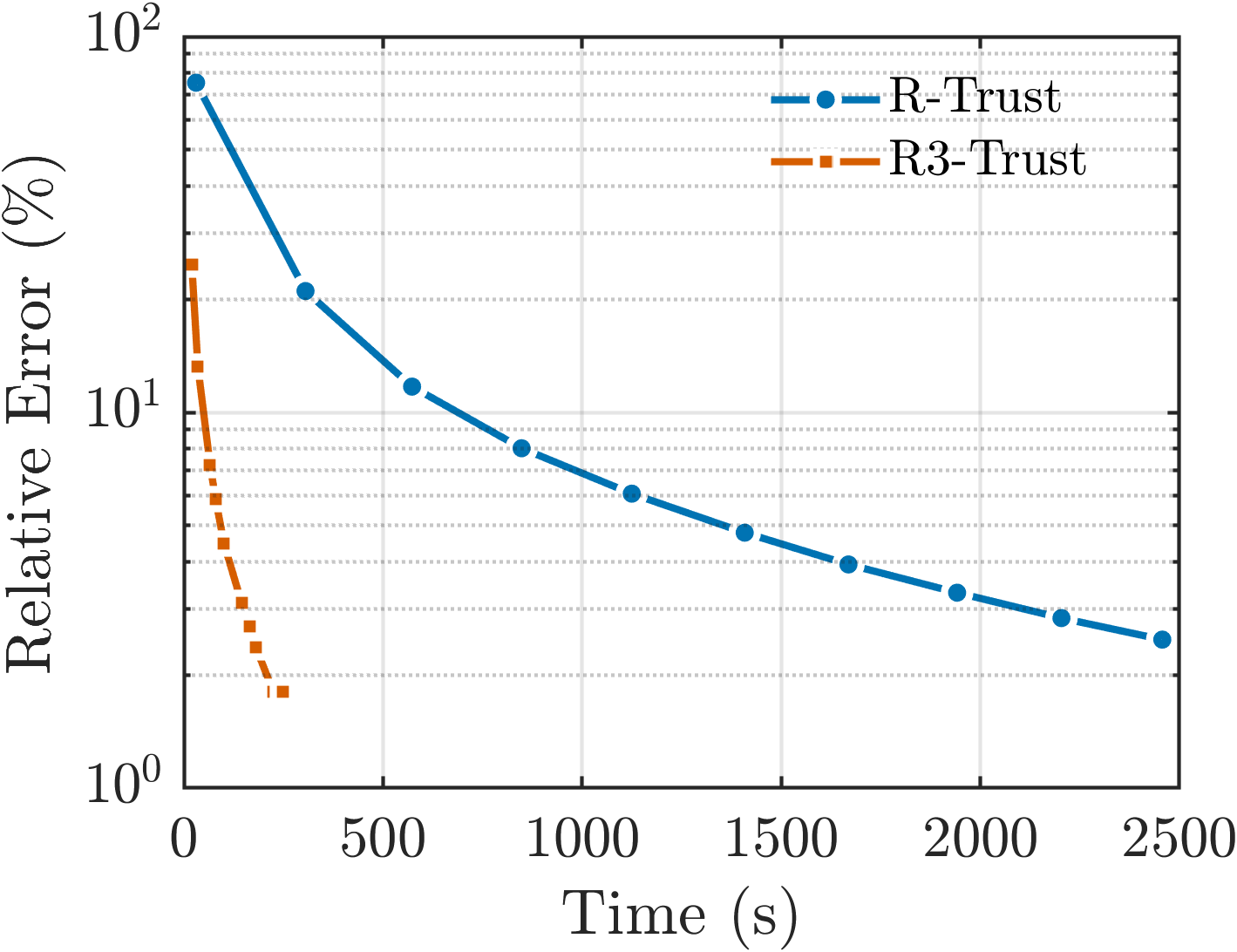}
			\caption{Exact (R-Trust) versus randomized (R3-Trust) trust-region
				embedding on the lung case. Left: the two algorithms trace nearly
				identical error--nnz Pareto surfaces. Right: relative error versus
				runtime; R3-Trust (batch size $10$) reaches the same accuracy roughly
				an order of magnitude faster than R-Trust.}
			\label{fig:rtrust_vs_r3trust}
		\end{figure}
		
		Finally, we examine the effect of the batch size in R3-Trust, that is, the
		rank of the low-rank term appended at each iteration.
		\Cref{fig:r3trust_parameters_sensitivity_analysis} shows that increasing the
		batch size accelerates the algorithm (right), because more of the low-rank
		structure is captured per iteration and fewer iterations are needed, but that
		an excessively large batch size can slightly worsen the error--nnz trade-off
		(left) by appending low-rank terms that are less economical than a finer,
		alternating selection. From the trust-region point of view, a larger batch
		size expands the trust region at each iteration, trading some accuracy per
		stored value for larger steps and faster progress. A batch size of $10$
		offers a favorable balance between speed and quality and is adopted as the
		default in our implementation and in all experiments in this section (see
		\cref{fig:r3trust_parameters_sensitivity_analysis_prostate} in the appendix
		for the prostate case, which shows the same trend).
		
		One may ask whether the batch size contradicts our claim that embedding is
		parameter-free. It does not, because the batch size plays a fundamentally
		different role than the hyper-parameters of the recovery methods. Like a
		stopping tolerance or an iteration limit, parameters that every numerical
		method carries, it balances accuracy against computational effort, and
		regardless of its value a single run still returns the entire error--nnz
		Pareto surface. The recovery hyper-parameters (rank and cardinality bounds,
		regularization penalties, noise precisions), by contrast, determine
		\emph{which} single decomposition is returned: they must be tuned anew for
		every desired point on the trade-off curve, or, in the recovery paradigm, to
		match the unknown underlying structure.
		
		\begin{figure}
			\centering
			\includegraphics[width=.42\linewidth]{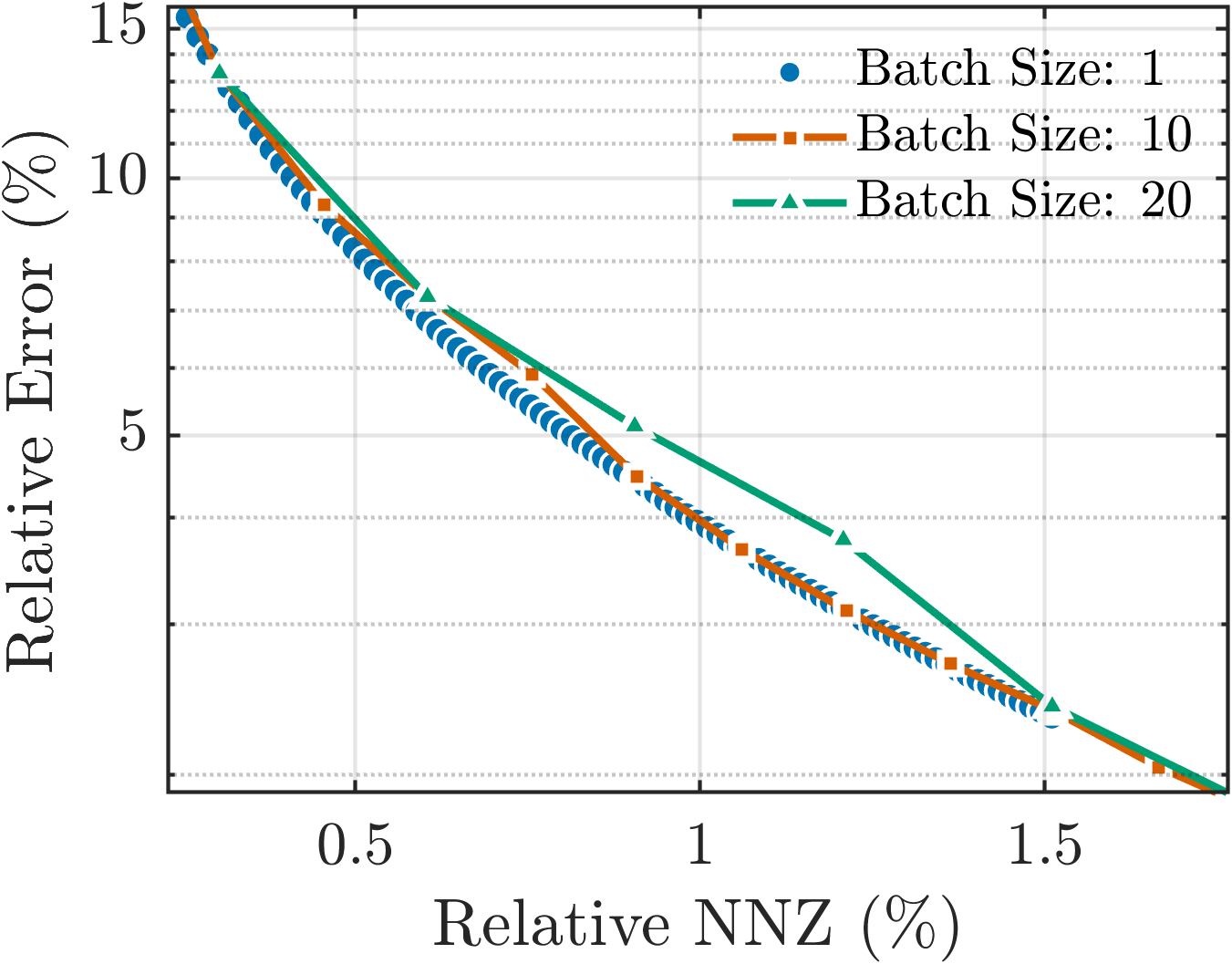}
			\includegraphics[width=.43\linewidth]{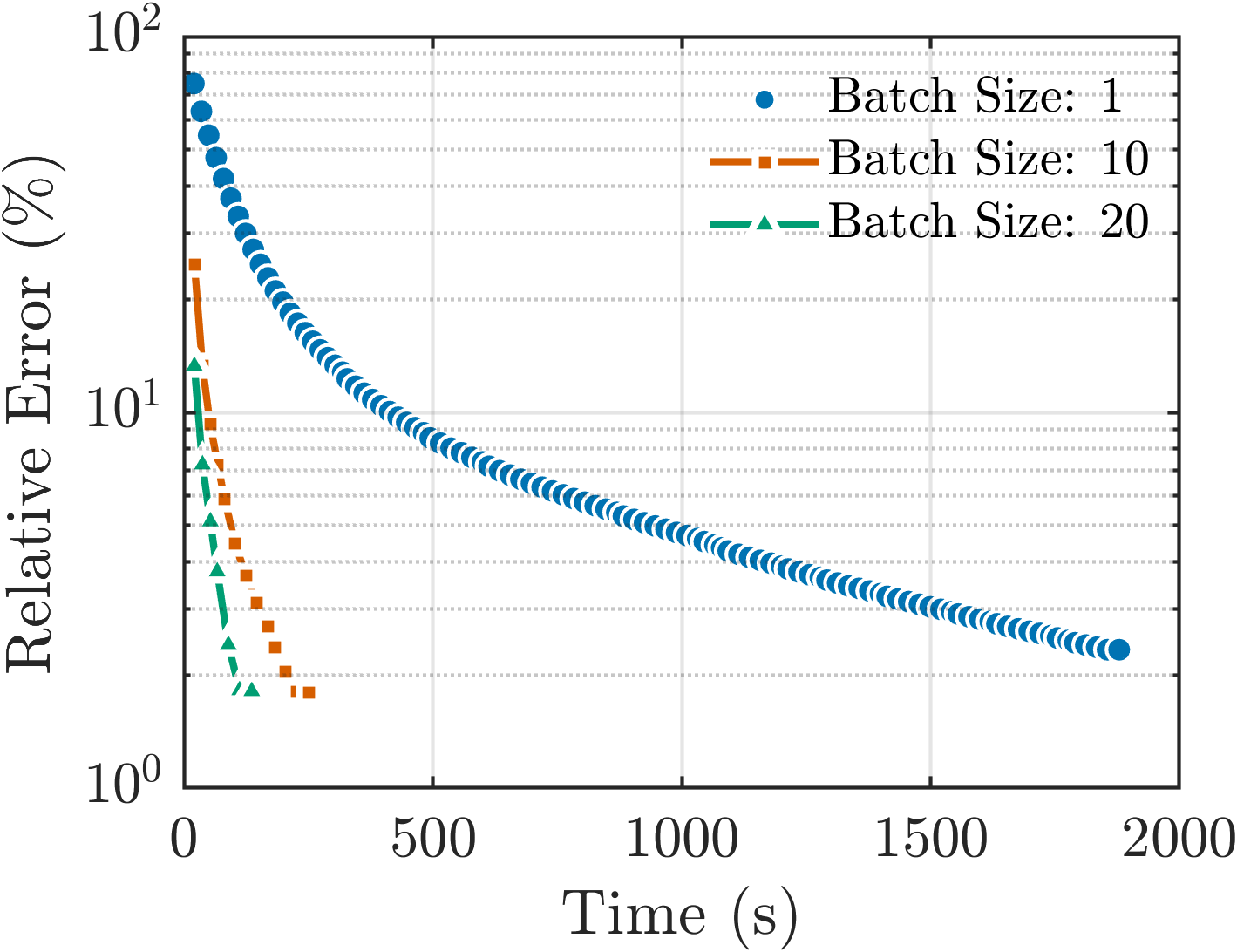}
			\caption{Sensitivity of R3-Trust to the batch size (the rank of the
				low-rank term added per iteration) on the lung case, for batch sizes
				$1$, $10$, and $20$. Increasing the batch size reduces runtime
				(right) but can slightly degrade the error--nnz trade-off (left). A
				batch size of $10$ is used as the default.}
			\label{fig:r3trust_parameters_sensitivity_analysis}
		\end{figure}

		\subsection{Using R3-Trust for matrix recovery: synthetic data}
		We now evaluate R3-Trust as a matrix \emph{recovery} method, following the
		synthetic-data protocol of the comparative study of Zhou et
		al.~\cite{Zhou2014-ug}. The ground-truth low-rank component is generated as
		the product of two Gaussian factors, $L=UV$ with
		$U\in\mathbb{R}^{m\times r}$ and $V\in\mathbb{R}^{r\times n}$ having
		independent standard normal entries, normalized so that
		$\|L\|_{\text{F}}=\sqrt{mn}$. The sparse component $S$ has support drawn uniformly at
		random with density $\rho$ and nonzero entries drawn uniformly from
		$[-10,10]$, and the noise matrix $E$ has independent $\mathcal{N}(0,\sigma^2)$
		entries. The observed matrix is $A=L+S+E$ with $m=n=1000$ and $r=50$, and,
		again following~\cite{Zhou2014-ug}, we consider four regimes obtained by
		crossing the sparsity levels $\rho\in\{0.1,0.3\}$ with the noise levels
		$\sigma\in\{0,0.1\}$. (With a slight abuse of notation, $\rho$ and
		$\sigma$ in this section denote the sparsity density and the noise level,
		and are unrelated to the rank parameter $\rho$ and the singular values
		$\sigma_i$ of Section~\ref{sec:algorithms}.) Recovery quality is measured against the ground truth
		by the mean of the relative errors of the two components,
		$\tfrac12\big(\|L-\hat L\|_{\text{F}}/\|L\|_{\text{F}}+\|S-\hat S\|_{\text{F}}/\|S\|_{\text{F}}\big)$.
		
		The hyper-parameter choices are summarized in \cref{tab:hyperparams}.
		R3-Trust and VB run with their default settings, identical to those used in
		the radiotherapy experiments (in the noiseless regime, R3-Trust is simply run
		to a tighter stopping tolerance, since exact recovery is possible). GoDec, in
		contrast, requires upper bounds on the rank and cardinality, which we set
		from the ground truth as $1.5\,r$ and $1.5\operatorname{nnz}(S)$; ADMM in the
		noisy regime likewise requires penalties set from the known noise level.
		These choices give GoDec and ADMM access to information about the underlying
		structure that would be unavailable in practice.
		
		\Cref{fig:matrix_recovery_syntetic} reports, for each regime, the recovery error as a function of runtime, with each curve traced over the iterations of the corresponding algorithm. Across the four regimes, R3-Trust delivers competitive and often superior performance, most notably in speed. Thus, although R3-Trust was designed for embedding, these results suggest that it is also an attractive option for matrix recovery, particularly when runtime is a concern.
		
We emphasize that, although R3-Trust was run here with its default settings, we
do not claim it to be parameter-free for \emph{recovery} applications. The
embedding formulation~\eqref{LSME} seeks a computationally efficient
representation of the given matrix, which need not coincide with the
ground-truth decomposition; consequently, the default settings do not always
recover the underlying components well. Two adjustments help align the algorithm
with the structure being recovered. First, the batch size 
$k$ should be reduced when a single default-sized update would add more structure than the ground truth actually contains: if the rank of the underlying low-rank component is at
or below the default batch size ($k=10$), a smaller $k$ prevents the algorithm
from appending more rank per iteration than the ground truth warrants, and the
same applies to a highly sparse component, whose nonzero count $\|S\|_0$ is
comparable to or smaller than the per-iteration sparse budget $k(m+n)$. Second,
a cost weight $\eta$ can be introduced into the objective of~\eqref{LSME},
replacing $(m+n)\operatorname{rank}(L)+\|S\|_0$ with
$\eta(m+n)\operatorname{rank}(L)+\|S\|_0$. Such a weight is unnecessary for
embedding, where the relative cost of the low-rank and sparse parts is fixed by
the computational framework, but in recovery it becomes a useful tunable control:
increasing $\eta$ makes low-rank updates more expensive relative to sparse ones
and drives the decomposition toward a lower-rank output, whereas decreasing it
favors the low-rank component. Thus $\eta$ can be tuned according to whether the
ground truth is predominantly low-rank or sparse. Introducing this weight
requires only minor changes to the algorithms and is already supported in the
\href{https://github.com/Radiotherapy-Optimization/SLME-matrix-embedding}{released code}.

		\begin{table}[htbp]
			\centering
			\caption{Hyper-parameter selection for the synthetic low-rank plus sparse
				recovery experiments. R3-Trust, VB, and ADMM in the noiseless regime
				($\sigma=0$) use only the algorithms' default settings, identical to those
				used for the radiotherapy data, whereas GoDec and ADMM with noise
				($\sigma>0$) require problem-specific tuning.}
			\label{tab:hyperparams}
			\begin{tabularx}{\linewidth}{@{}l l X@{}}
				\toprule
				Algorithm & Hyper-parameter setting & Notes \\
				\midrule
				R3-Trust
				& Defaults
				& Same as the radiotherapy experiments. \\[3pt]
				VB
				& Defaults
				& Same as the radiotherapy experiments. \\[3pt]
				
				GoDec
				& \begin{tabular}[t]{@{}l@{}}rank $\le \lceil 1.5\,r\rceil$\\[3pt]card $\le \lceil 1.5\,\mathrm{nnz}(S)\rceil$\end{tabular}
				& Problem-specific rank and cardinality upper bounds set from the ground-truth $L$ and $S$. Tighter upper-bounds could improve the performance, but they are unknown in practice. \\[10pt]
				ADMM ($\sigma=0$)
				& $\lambda = 1/\sqrt{\max(m,n)}$
				& Standard PCP penalty, no problem-specific tuning. \\[3pt]
				
				ADMM ($\sigma>0$)
				& \begin{tabular}[t]{@{}l@{}}$\lambda = 1/\sqrt{\max(m,n)}$\\[3pt]$\mu = 1/(\sqrt{2\max(m,n)}\,\sigma)$\end{tabular}
				& Problem-specific penalties based on the noise level $\sigma$. \\[3pt]
				\bottomrule
			\end{tabularx}
		\end{table}

		\begin{figure}
			\centering
			\begin{minipage}[t]{0.48\linewidth}
				\centering
				\includegraphics[width=\linewidth]{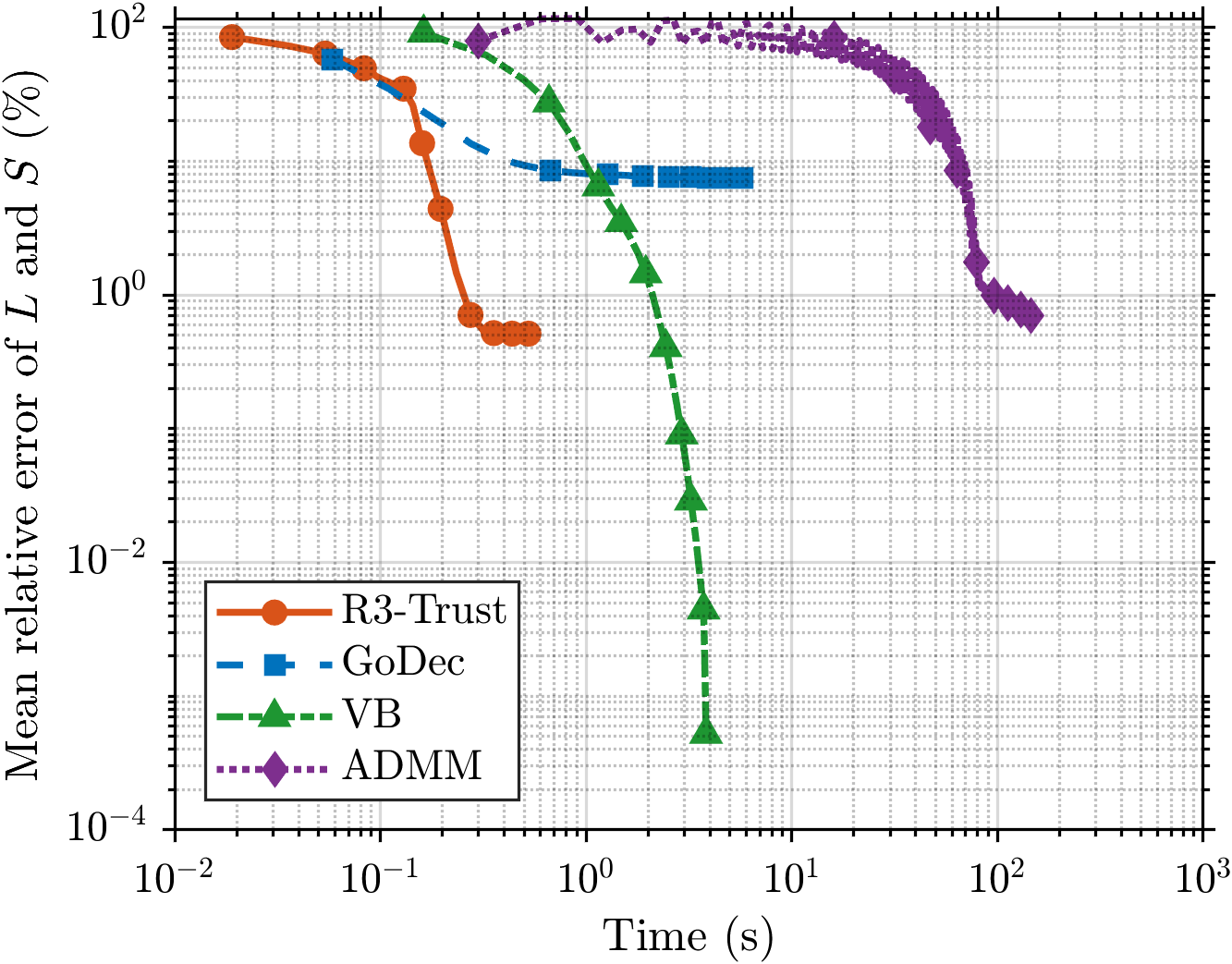}\\
				{\small (a) $\rho=0.1$, $\sigma=0$}
			\end{minipage}\hfill
			\begin{minipage}[t]{0.48\linewidth}
				\centering
				\includegraphics[width=\linewidth]{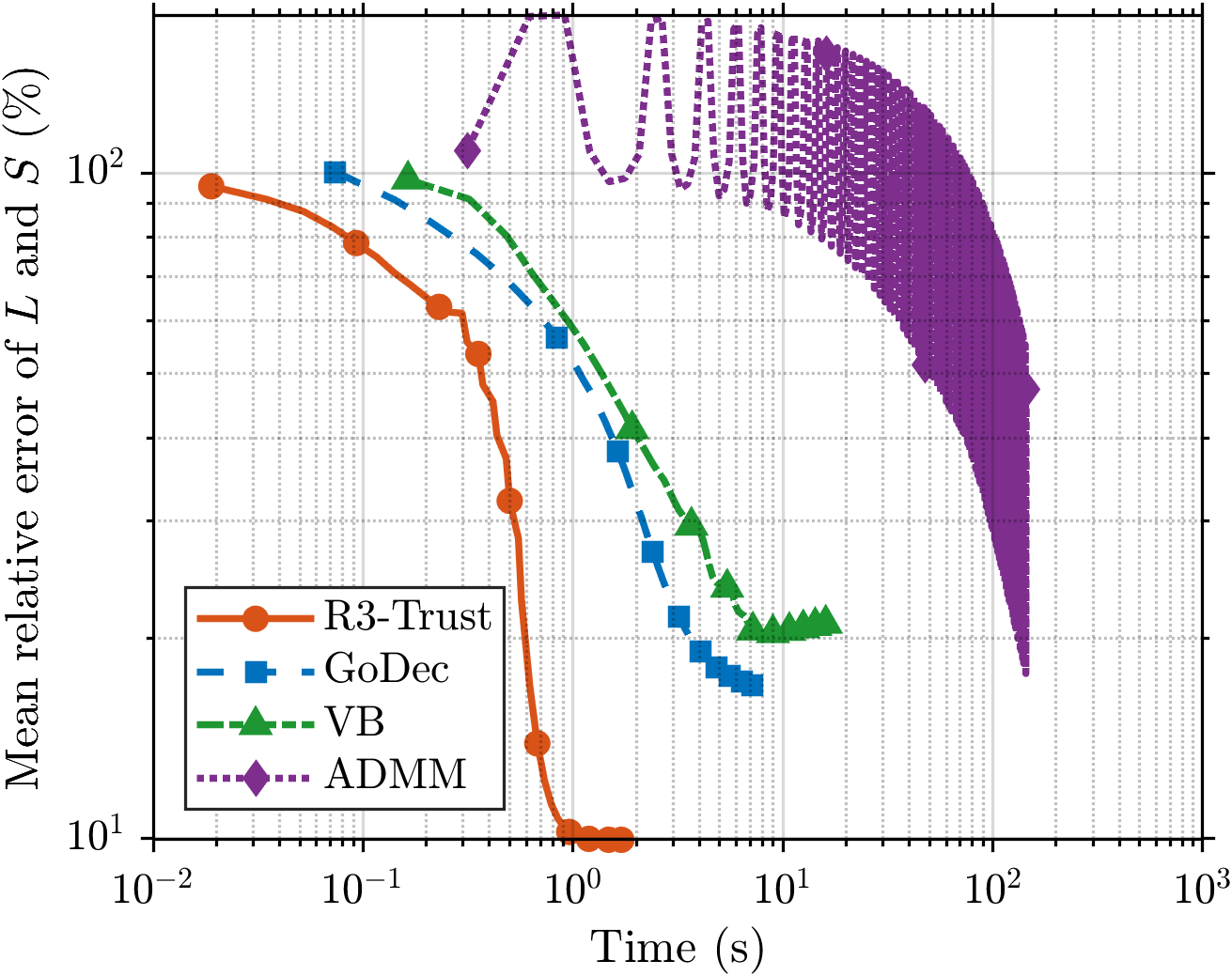}\\
				{\small (b) $\rho=0.3$, $\sigma=0$}
			\end{minipage}\\[8pt]
			\begin{minipage}[t]{0.48\linewidth}
				\centering
				\includegraphics[width=\linewidth]{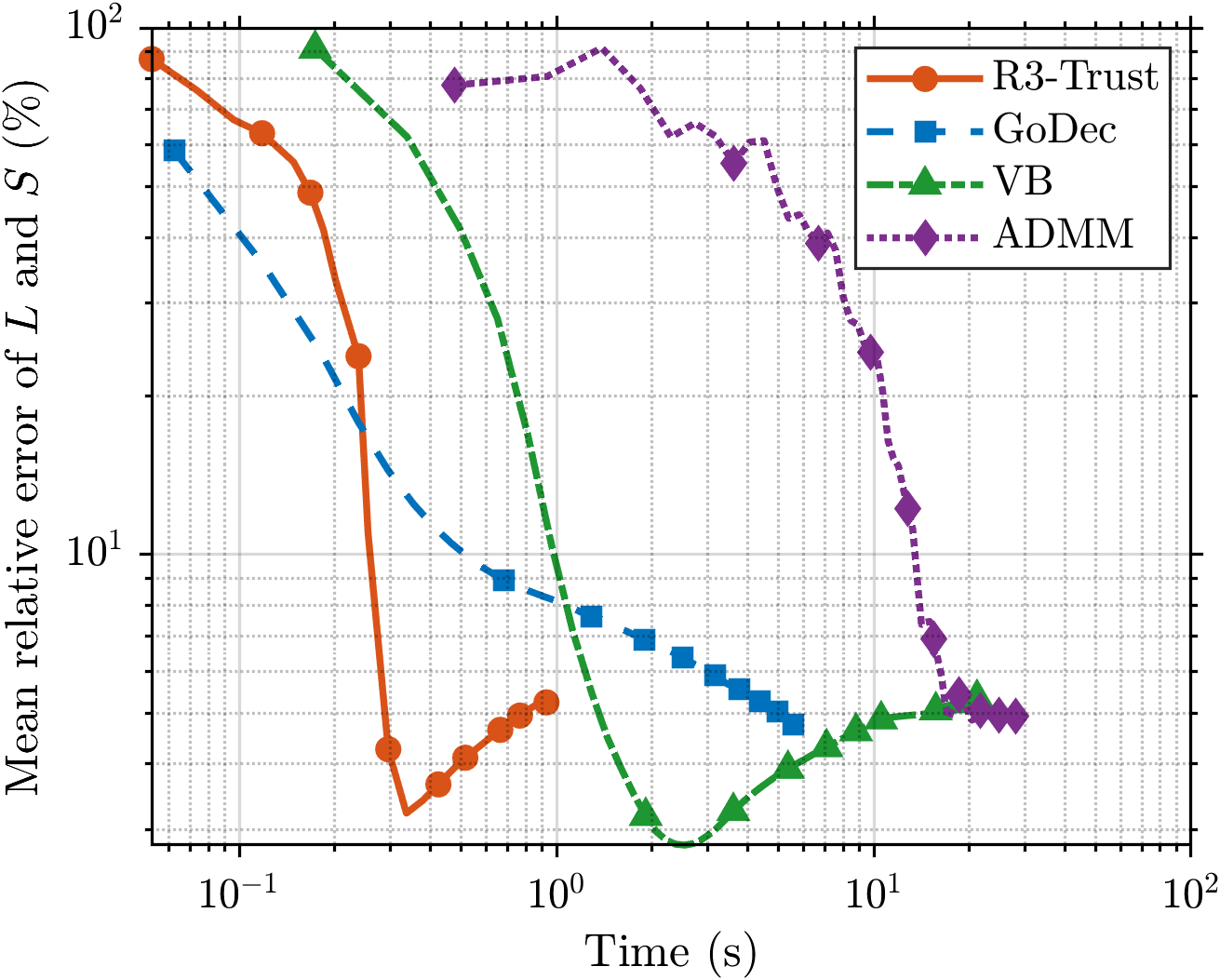}\\
				{\small (c) $\rho=0.1$, $\sigma=0.1$}
			\end{minipage}\hfill
			\begin{minipage}[t]{0.48\linewidth}
				\centering
				\includegraphics[width=\linewidth]{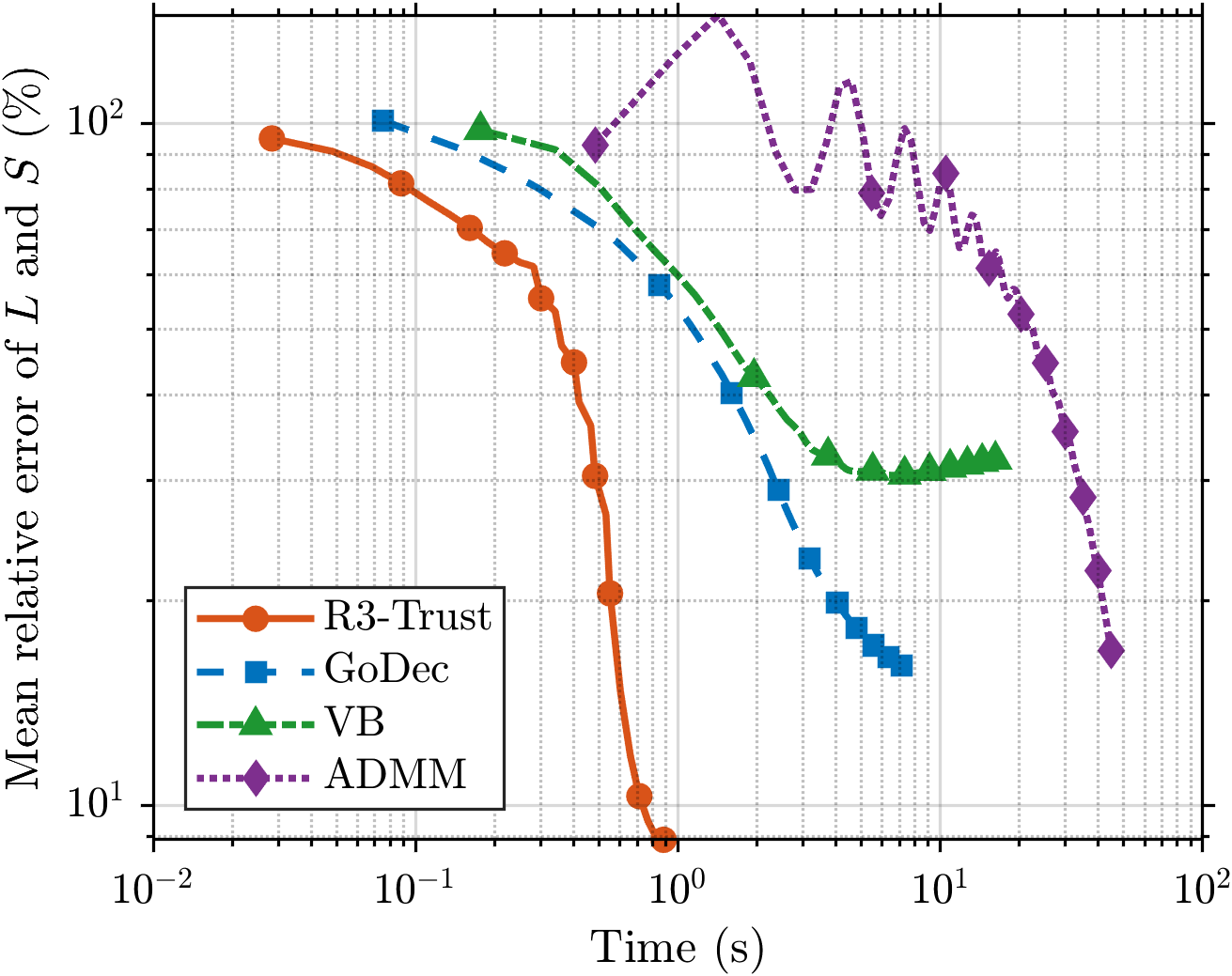}\\
				{\small (d) $\rho=0.3$, $\sigma=0.1$}
			\end{minipage}
			\caption{Matrix recovery on synthetic data with $m=n=1000$ and $r=50$.
				Each panel plots, on log--log axes, the mean of the relative recovery
				errors of $L$ and $S$ (in percent) against runtime, with each curve
				traced over the iterations of the corresponding algorithm. The four
				panels cross the sparsity level $\rho\in\{0.1,0.3\}$ of the sparse
				component (increasing left to right) with the noise level
				$\sigma\in\{0,0.1\}$ (noiseless in the top row, noisy in the bottom
				row). R3-Trust reaches its final accuracy one to two orders of
				magnitude faster than GoDec, VB, and ADMM in all four regimes, and
				attains the best final accuracy in the low-sparsity regimes (b) and
				(d).}
			\label{fig:matrix_recovery_syntetic}
		\end{figure}

		\section{Conclusion}\label{sec:conclusion}
		
		We introduced sparse-plus-low-rank matrix embedding, a new perspective
		on sparse-plus-low-rank decomposition in which the goal is not to recover
		latent components but to construct a computationally efficient surrogate
		$A\approx S+HW$ for a large dense matrix. We formulated SLME as a
		bi-objective nonconvex problem that trades approximation error against
		representation cost, and developed a family of trust-region algorithms
		(R-Trust, R2-Trust, and R3-Trust) that approximate the entire Pareto frontier
		of this trade-off in a single parameter-free run. The algorithms rest on a
		closed-form solution of the trust-region subproblem and come with convergence
		guarantees: unconditional linear convergence, a quadratically improved rate
		for nonnegative matrices, and a per-iteration reduction that becomes
		dimension-independent for matrices with genuine sparse-plus-low-rank
		structure. On clinical dose-influence matrices from radiotherapy treatment
		planning, R3-Trust computed the full accuracy--cost frontier in minutes,
		dominating the decompositions produced by representative recovery
		methods, each of which required hours per point or exhausted memory, and on
		synthetic data it proved competitive with these methods for matrix recovery
		itself.
		
		An exciting direction for future work arises from recent studies in machine
		learning that employ sparse-plus-low-rank decompositions to compress large
		neural networks, for instance by approximating weight matrices of large
		language models~\cite{li2023losparse} or attention
		matrices~\cite{chen2021scatterbrain}. These works share our motivation of
		using the decomposition for compression rather than for recovering latent
		structure. Their setting is more challenging, however: the compression must
		preserve the downstream performance of the network, whereas in our
		application the Frobenius-norm approximation error is the quantity of
		interest. Extending the SLME framework and the R3-Trust algorithm to such
		performance-aware compression objectives is a natural and challenging avenue
		for future research.

		\appendix
		\section{Additional results for the prostate case}\label{sec:appendix}
		
		This appendix collects the prostate-case counterparts of the lung-case
		experiments of Section \ref{sec:numerics}; the same trends are observed.
		
		\begin{figure}
			\centering
			\includegraphics[width=.42\linewidth]{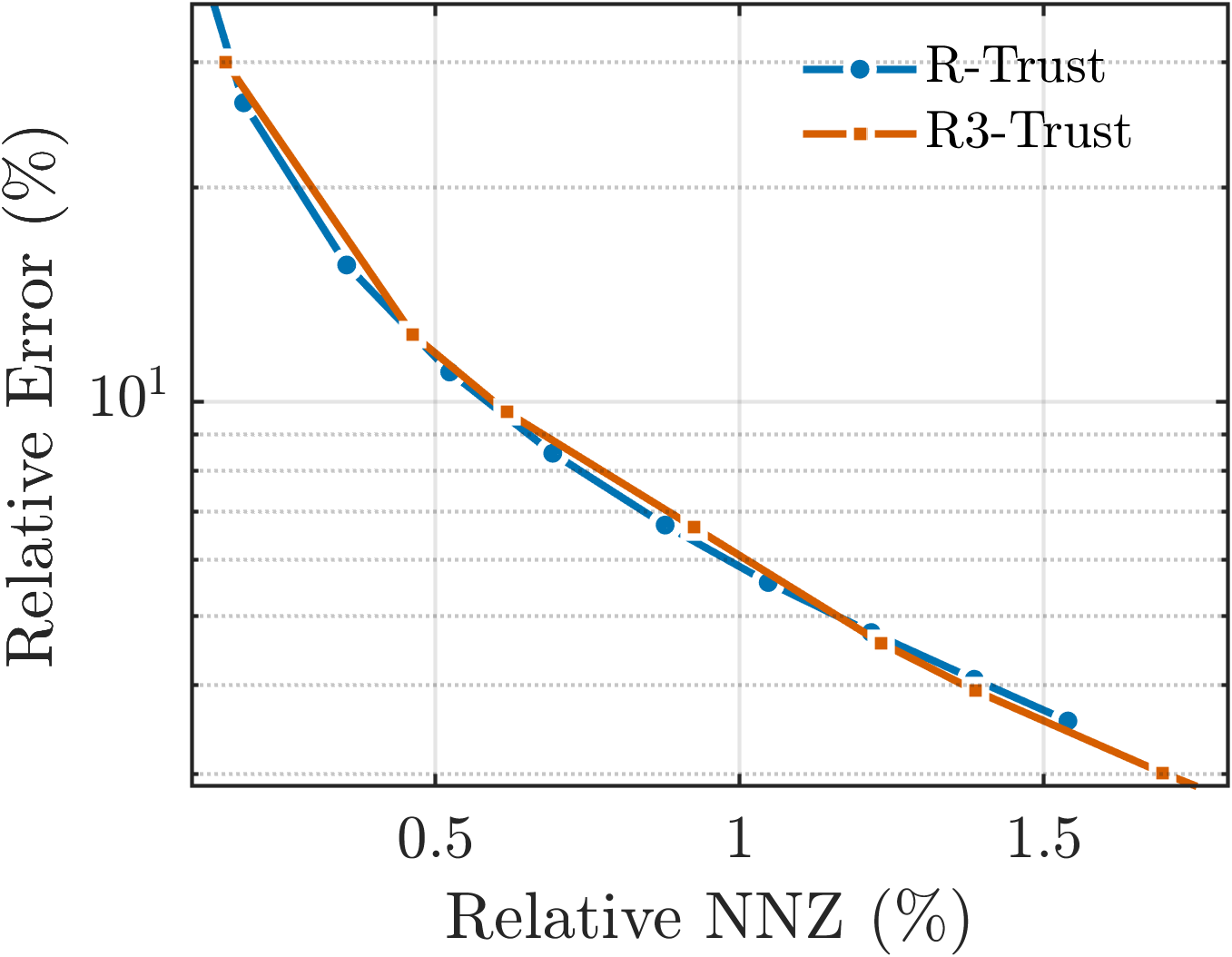}
			\includegraphics[width=.43\linewidth]{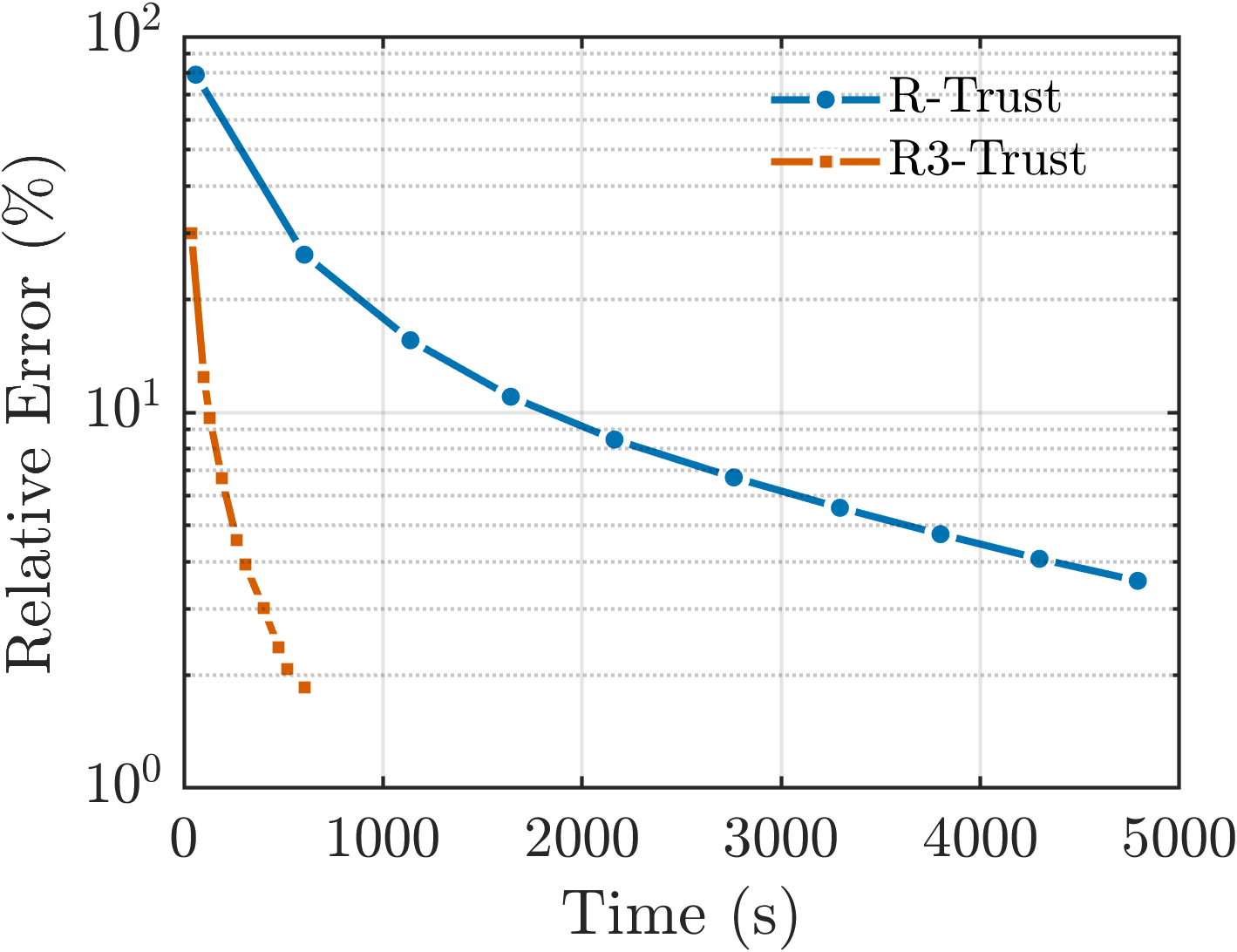}
			\caption{Exact (R-Trust) versus randomized (R3-Trust) trust-region
				embedding on the prostate case. Left: the two algorithms trace nearly
				identical error--nnz Pareto surfaces. Right: relative error versus
				runtime; R3-Trust (batch size $10$) reaches the same accuracy roughly
				an order of magnitude faster than R-Trust.}
			\label{fig:rtrust_vs_r3trust_prostate}
		\end{figure}

		\begin{figure}
			\centering
			\includegraphics[width=.42\linewidth]{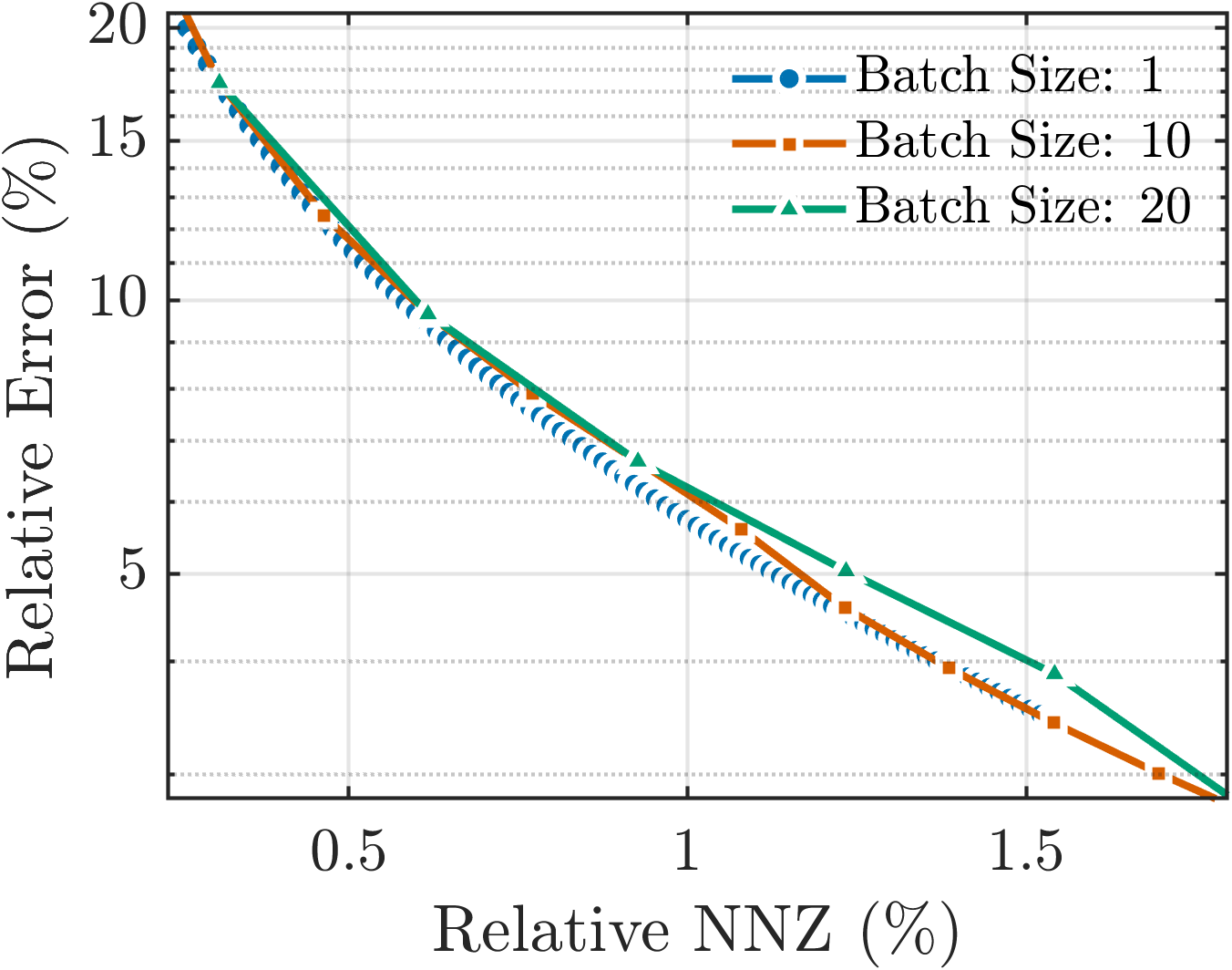}
			\includegraphics[width=.43\linewidth]{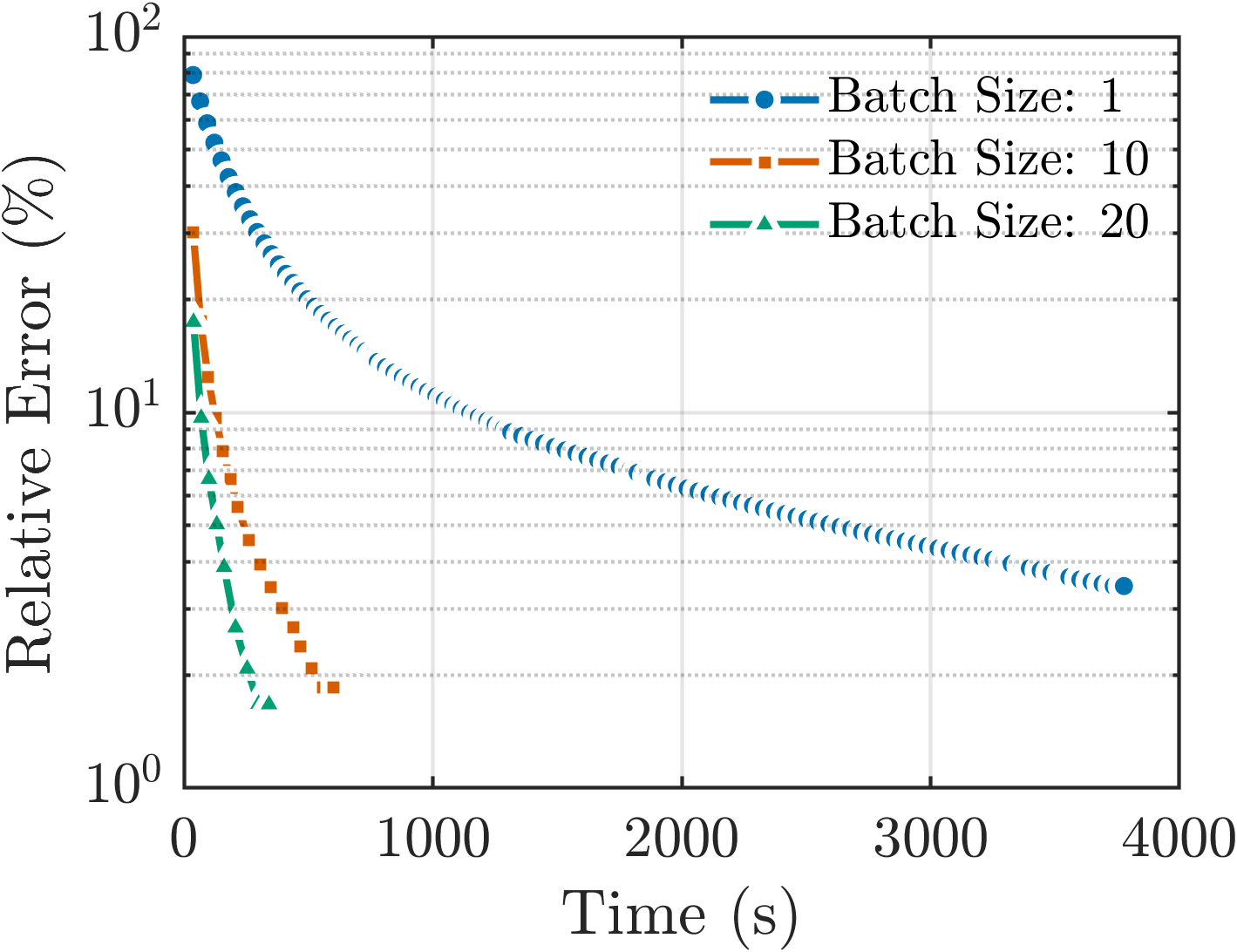}
			\caption{Sensitivity of R3-Trust to the batch size (the rank of the
				low-rank term added per iteration) on the prostate case, for batch
				sizes $1$, $10$, and $20$. Increasing the batch size reduces runtime
				(right) but can slightly degrade the error--nnz trade-off (left). A
				batch size of $10$ is used as the default.}
			\label{fig:r3trust_parameters_sensitivity_analysis_prostate}
		\end{figure}
		
		\section*{Acknowledgments}
		
		The authors from Memorial Sloan Kettering Cancer Center gratefully acknowledge partial support from the NIH Cancer Center Support Grant/Core Grant (P30 CA008748).
		
		\bibliographystyle{siamplain}
		\bibliography{references}

\begin{thebibliography}{10}

\bibitem{Aybat2015-fo}
{\sc N.~S. Aybat and G.~Iyengar}, {\em An alternating direction method with
  increasing penalty for stable principal component pursuit}, Comput. Optim.
  Appl., 61 (2015), pp.~635--668.

\bibitem{Babacan2012-gm}
{\sc S.~D. Babacan, M.~Luessi, R.~Molina, and A.~K. Katsaggelos}, {\em Sparse
  bayesian methods for {Low-Rank} matrix estimation}, IEEE Trans. Signal
  Process., 60 (2012), pp.~3964--3977.

\bibitem{beck2009fast}
{\sc A.~Beck and M.~Teboulle}, {\em A fast iterative shrinkage-thresholding
  algorithm for linear inverse problems}, SIAM journal on imaging sciences, 2
  (2009), pp.~183--202.

\bibitem{boyd2011distributed}
{\sc S.~Boyd, N.~Parikh, E.~Chu, B.~Peleato, J.~Eckstein, et~al.}, {\em
  Distributed optimization and statistical learning via the alternating
  direction method of multipliers}, Foundations and Trends{\textregistered} in
  Machine learning, 3 (2011), pp.~1--122.

\bibitem{cabral2013unifying}
{\sc R.~Cabral, F.~De~la Torre, J.~P. Costeira, and A.~Bernardino}, {\em
  Unifying nuclear norm and bilinear factorization approaches for low-rank
  matrix decomposition}, in Proceedings of the IEEE International Conference on
  Computer Vision (ICCV), 2013, pp.~2488--2495.

\bibitem{cai2010singular}
{\sc J.-F. Cai, E.~J. Cand{\`e}s, and Z.~Shen}, {\em A singular value
  thresholding algorithm for matrix completion}, SIAM Journal on optimization,
  20 (2010), pp.~1956--1982.

\bibitem{Candes2009-fm}
{\sc E.~J. Cand{\`e}s, X.~Li, Y.~Ma, and J.~Wright}, {\em Robust principal
  component analysis?}, Journal of the ACM, 58 (2011), pp.~11:1--11:37.

\bibitem{Chandrasekaran2011-vc}
{\sc V.~Chandrasekaran, S.~Sanghavi, P.~A. Parrilo, and A.~S. Willsky}, {\em
  {Rank-Sparsity} incoherence for matrix decomposition}, SIAM J. Optim., 21
  (2011), pp.~572--596.

\bibitem{chen2021scatterbrain}
{\sc B.~Chen, T.~Dao, E.~Winsor, Z.~Song, A.~Rudra, and C.~R{\'e}}, {\em
  Scatterbrain: Unifying sparse and low-rank attention approximation}, in
  Advances in Neural Information Processing Systems (NeurIPS), vol.~34, 2021,
  pp.~17413--17426.

\bibitem{conn2000trust}
{\sc A.~R. Conn, N.~I.~M. Gould, and P.~L. Toint}, {\em Trust Region Methods},
  MPS-SIAM Series on Optimization, SIAM, Philadelphia, PA, 2000.

\bibitem{ding2011bayesian}
{\sc X.~Ding, L.~He, and L.~Carin}, {\em Bayesian robust principal component
  analysis}, IEEE Transactions on Image Processing, 20 (2011), pp.~3419--3430.

\bibitem{eckart1936approximation}
{\sc C.~Eckart and G.~Young}, {\em The approximation of one matrix by another
  of lower rank}, Psychometrika, 1 (1936), pp.~211--218.

\bibitem{fazel2002matrix}
{\sc M.~Fazel}, {\em Matrix rank minimization with applications}, PhD thesis,
  PhD thesis, Stanford University, 2002.

\bibitem{golub2013matrix}
{\sc G.~H. Golub and C.~F. Van~Loan}, {\em Matrix Computations}, Johns Hopkins
  University Press, Baltimore, MD, 4th~ed., 2013.

\bibitem{doi:10.1137/090771806}
{\sc N.~Halko, P.~G. Martinsson, and J.~A. Tropp}, {\em Finding structure with
  randomness: Probabilistic algorithms for constructing approximate matrix
  decompositions}, SIAM Review, 53 (2011), pp.~217--288,
  \url{https://doi.org/10.1137/090771806}.

\bibitem{hogben2013handbook}
{\sc L.~Hogben}, {\em Handbook of linear algebra}, CRC press, 2013.

\bibitem{jhanwar2023portpy}
{\sc G.~Jhanwar, M.~Tefagh, V.~T. Taasti, S.~R. Alam, S.~Tuomaala, S.~Nadeem,
  and M.~Zarepisheh}, {\em {PortPy}: An open-source {Python} package for
  planning and optimization in radiation therapy including benchmark data and
  algorithms}, AAPM 65th Annual Meeting \& Exhibition,  (2023).

\bibitem{li2023losparse}
{\sc Y.~Li, Y.~Yu, Q.~Zhang, C.~Liang, P.~He, W.~Chen, and T.~Zhao}, {\em
  Losparse: Structured compression of large language models based on low-rank
  and sparse approximation}, in International Conference on Machine Learning
  (ICML), PMLR, 2023, pp.~20336--20350.

\bibitem{liu2010interior}
{\sc Z.~Liu and L.~Vandenberghe}, {\em Interior-point method for nuclear norm
  approximation with application to system identification}, SIAM Journal on
  Matrix Analysis and Applications, 31 (2010), pp.~1235--1256.

\bibitem{10.1093/qmath/11.1.50}
{\sc L.~Mirsky}, {\em {Symmetric gauge functions and unitarily invariant
  norms}}, The Quarterly Journal of Mathematics, 11 (1960), pp.~50--59,
  \url{https://doi.org/10.1093/qmath/11.1.50}.

\bibitem{nesterov2007gradient}
{\sc Y.~Nesterov}, {\em Gradient methods for minimizing composite functions},
  Mathematical Programming, 140 (2013), pp.~125--161.

\bibitem{Parikh2014-ft}
{\sc N.~Parikh and S.~Boyd}, {\em Proximal algorithms}, Foundations and
  Trends\textregistered{} in Optimization, 1 (2014), pp.~127--239.

\bibitem{romeijn2005column}
{\sc H.~E. Romeijn, R.~K. Ahuja, J.~F. Dempsey, and A.~Kumar}, {\em A column
  generation approach to radiation therapy treatment planning using aperture
  modulation}, SIAM Journal on Optimization, 15 (2005), pp.~838--862.

\bibitem{Shen2014-tq}
{\sc Y.~Shen, Z.~Wen, and Y.~Zhang}, {\em Augmented lagrangian alternating
  direction method for matrix separation based on low-rank factorization},
  Optim. Methods Softw., 29 (2014), pp.~239--263.

\bibitem{shepard1999optimizing}
{\sc D.~M. Shepard, M.~C. Ferris, G.~H. Olivera, and T.~R. Mackie}, {\em
  Optimizing the delivery of radiation therapy to cancer patients}, SIAM
  Review, 41 (1999), pp.~721--744.

\bibitem{tefagh2025compressed}
{\sc M.~Tefagh, G.~Jhanwar, and M.~Zarepisheh}, {\em Compressed radiotherapy
  treatment planning: A new paradigm for rapid and high-quality treatment
  planning optimization}, Medical Physics,  (2025),
  \url{https://doi.org/10.1002/mp.17736}.

\bibitem{zarepisheh2018computation}
{\sc M.~Zarepisheh, L.~Xing, and Y.~Ye}, {\em A computation study on an
  integrated alternating direction method of multipliers for large scale
  optimization}, Optimization Letters, 12 (2018), pp.~3--15.

\bibitem{zhao2014robust}
{\sc Q.~Zhao, D.~Meng, Z.~Xu, W.~Zuo, and L.~Zhang}, {\em Robust principal
  component analysis with complex noise}, in International Conference on
  Machine Learning (ICML), PMLR, 2014, pp.~55--63.

\bibitem{Zhou_undated-ur}
{\sc T.~Zhou and D.~Tao}, {\em {GoDec}: Randomized low-rank \& sparse matrix
  decomposition in noisy case}, in Proceedings of the 28th International
  Conference on Machine Learning ({ICML}), 2011, pp.~33--40.

\bibitem{Zhou2013-tj}
{\sc T.~Zhou and D.~Tao}, {\em Greedy bilateral sketch, completion \&
  smoothing}, in Proceedings of the Sixteenth International Conference on
  Artificial Intelligence and Statistics, C.~M. Carvalho and P.~Ravikumar,
  eds., vol.~31 of Proceedings of Machine Learning Research, Scottsdale,
  Arizona, USA, 2013, PMLR, pp.~650--658.

\bibitem{Zhou2014-ug}
{\sc X.~Zhou, C.~Yang, H.~Zhao, and W.~Yu}, {\em {Low-Rank} modeling and its
  applications in image analysis}, ACM Comput. Surv., 47 (2014), pp.~1--33.

\bibitem{Zhou2010-ji}
{\sc Z.~Zhou, X.~Li, J.~Wright, E.~Cand{\`e}s, and Y.~Ma}, {\em Stable
  principal component pursuit}, in 2010 {IEEE} International Symposium on
  Information Theory, June 2010, pp.~1518--1522.

\end{thebibliography}
		
	\end{document}